\documentclass[11pt]{amsart}

\usepackage[colorlinks, linkcolor=blue, citecolor=blue, urlcolor=red!80!gray, pagebackref]{hyperref}
\usepackage{xspace}
\usepackage{graphicx}
\usepackage{amssymb, amsmath, xcolor}
\usepackage{import}
\usepackage{tikz}
\usetikzlibrary{positioning,fit}
\usetikzlibrary{decorations.pathreplacing}
\usepackage{array}
\tikzset{filled/.style={minimum width=5pt,inner sep=0pt,circle,fill=black}}
\usepackage{subcaption}
\usepackage[indent]{parskip}
\usepackage{ytableau}
\usepackage{mathtools}
\usepackage{fullpage}
\usepackage[utf8]{inputenc}
\usepackage{float}
\usepackage{longtable}
\usepackage{todonotes}
\usepackage{amsrefs}
\usepackage{verbatim}
\usepackage{caption}
\usepackage[most]{tcolorbox}

\tikzstyle{dot}=[circle,fill, minimum size = 8pt, inner sep=0pt]

\setlength{\footskip}{30pt}

\newtheorem{theorem}{Theorem}[section]
\newtheorem{lemma}[theorem]{Lemma}
\newtheorem{corollary}[theorem]{Corollary}
\newtheorem{proposition}[theorem]{Proposition}
\newtheorem{conjecture}[theorem]{Conjecture}

\theoremstyle{definition}
\newtheorem{definition}[theorem]{Definition}

\newtheorem{construction}[theorem]{Construction}
\newtheorem{problem}[theorem]{Problem}

\theoremstyle{remark}

\numberwithin{equation}{section}
\numberwithin{figure}{section}


\newcommand{\N}{\mathbb{N}}
\newcommand{\SYT}{\operatorname{SYT}}
\newcommand{\sh}{\operatorname{sh}}
\newcommand{\Tdown}{T_{\lambda}}
\newcommand{\Tup}{T^{\uparrow}_\lambda}
\newcommand{\Blue}{cyan!20}
\newcommand{\Red}{red!20}
\DeclareDocumentCommand{\blue}{}{blue\xspace}
\DeclareDocumentCommand{\red}{}{red\xspace}


\title[RS shapes and cycle decompositions]{Robinson--Schensted shapes \\ arising from cycle decompositions}
\author[Erickson, Hunziker, Meddaugh, Sepanski, $\ldots$]{Martha Du Preez, William Q. Erickson, Jonathan Feigert, Markus Hunziker, Jonathan Meddaugh, Mitchell Minyard, Mark R. Sepanski,  Kyle Rosengartner}

\begin{document}

\keywords{Robinson--Schensted correspondence, partitions, symmetric group, cycle type, RSK shape, admissible tableau, $\alpha$-coloring, reverse column word}
\subjclass[2020]{Primary: 05E10; Secondary: 20B30}

\begin{abstract}
    In the symmetric group $ S_n $, each element $ \sigma $ has an associated cycle type $ \alpha $, a partition of $ n $ that identifies the conjugacy class of $ \sigma $. 
    The Robinson–Schensted (RS) correspondence links each $ \sigma $ to another partition $ \lambda $ of $ n $, representing the shape of the pair of Young tableaux produced by applying the RS row-insertion algorithm to $ \sigma $. 
    Surprisingly, the relationship between these two partitions, namely the cycle type $ \alpha $ and the RS shape $ \lambda $, has only recently become a subject of study. 
    In this work, we explicitly describe the set of RS shapes $ \lambda $ that can arise from elements of each cycle type $ \alpha $ in cases where $ \alpha $ consists of two cycles.
    To do this, we introduce the notion of an \emph{$\alpha$-coloring}, where one colors the entries in a tableau of shape $\lambda$, in such a way as to construct a permutation $\sigma$ with cycle type $\alpha$ and RS shape $\lambda$.
         
\end{abstract}

\maketitle
\setcounter{tocdepth}{1}

\section{Introduction}

\subsection*{Statement of the problem}

At the heart of classical algebraic combinatorics is the representation theory of the symmetric group $S_n$.
In turn, much of this theory can be expressed in terms of integer partitions.
In this paper, we describe the subtle relationship between two partitions closely associated with each element $\sigma \in S_n$: the \emph{cycle type} of $\sigma$, on one hand, and the \emph{shape} of $\sigma$ (via the Robinson--Schensted correspondence) on the other hand.
Although separately each of these partitions is fundamental to the general theory, the two had not yet been studied \emph{together} until a very recent paper~\cite{RS-complete-Rubinstein} treating the special case where $\sigma$ is a cyclic (or almost cyclic) permutation.
(The most closely related work seems to be the paper~\cite{GesselReutenauer} on counting permutations with a given cycle type and number of descents.)
A natural question is the following: which shapes arise from the elements of a given cycle type?

It is well known that the conjugacy classes of $S_n$ (and also its irreducible complex representations) can be naturally labeled by the integer partitions $\alpha$ of $n$ (written as $\alpha \vdash n$).
In particular, the conjugacy class of $\sigma \in S_n$ is labeled by the partition $\alpha = (\alpha_1, \ldots, \alpha_r)$ giving the \emph{cycle type} of $\sigma$, which is easily read off from the expression of $\sigma$ in disjoint cycle notation:
\[
\sigma = (\text{$\alpha_1$-cycle})(\text{$\alpha_2$-cycle}) \cdots (\text{$\alpha_r$-cycle}).
\]
(As usual, we write partitions so that $\alpha_1 \geq \cdots \geq \alpha_r \geq 1$.)
Throughout the paper, we write $\mathcal{C}_{\alpha}$ to denote the conjugacy class of $S_n$ consisting of elements with cycle type $\alpha$.

Another key concept in the representation theory of $S_n$ (and in algebraic combinatorics in general) is the Robinson--Schensted (RS) correspondence~\cites{Robinson-Representations-of-Sn,SchenstedOriginalPaper}; see also the exposition in~\cites{Knuth-Perm-Mat-GYT, Krattenthaler-Inc/dec-chains, Fulton-YT, Sagan-Sn}.
The RS correspondence is a bijection
\[
S_n\ \xrightarrow{{\rm RS}} \ \coprod_{\lambda \vdash n} \, \SYT(\lambda) \times \SYT(\lambda),
\]
where $\SYT(\lambda)$ denotes the set of standard Young tableaux with shape $\lambda$, meaning that the partition $\lambda$ gives the row lengths of the tableaux.
(Note that this bijection can be viewed as the combinatorial analogue of the decomposition of the regular representation of $S_n$ into irreducible $S_n \times S_n$-modules.)
If the RS correspondence takes $\sigma$ to a pair $(P,Q) \in \SYT(\lambda) \times \SYT(\lambda)$, then we say that $\lambda$ is the \emph{RS shape} of $\sigma$, which we denote by writing $\sh(\sigma) = \lambda$.
Thus in the example
\begin{equation}
    \label{example intro}
    \ytableausetup{smalltableaux,centertableaux}
    \sigma = (3,5,4,7)(1,2,6) \xmapsto{{\rm RS}} \left( \: \ytableaushort{137,24,5,6}, \ytableaushort{124,37,5,6} \: \right),
\end{equation}
we have $\sigma \in \mathcal{C}_{(4,3)}$, and $\sh(\sigma) = (3,2,1,1)$.  
(See Figure~\ref{fig: ex of RSK} for the details in this example.)
The main problem in this paper is to describe the elements of 
\[
\mathcal{S}_\alpha \coloneqq \{ \sh(\sigma) : \sigma \in \mathcal{C}_\alpha \}.
\]

\subsection*{Main result}

As a preliminary result (see Theorem~\ref{thm:box}), for all $\alpha = (\alpha_1, \ldots, \alpha_r) \vdash n$, we prove that the partitions in $\mathcal{S}_\alpha$ have Young diagrams fitting inside a certain bounding box:
\begin{equation}
\label{sh fit in box}
\mathcal{S}_\alpha \subseteq  \mathcal{B}_{\alpha},
\end{equation}
where
\begin{equation}
\label{B}
   \mathcal{B}_\alpha 
   \coloneqq \left\{ \lambda \vdash n : \;\;\; \parbox{50ex}{$\lambda$ has at most \\ $\big( n - r + \#\{i : \alpha_i = 2\} + \delta_{1,\alpha_r} \big)$ many rows and \\
   $\big( n - r + \#\{i : \alpha_i = 1 \} \big)$ many columns} \right\}.
\end{equation}
For example, if $\alpha = (4,2)$, then $\mathcal{B}_\alpha$ consists of all partitions of $n=6$ whose Young diagram fits inside the box of dimensions $(6-2+1+0) \times (6-2+0) = 5 \times 4$.
Concretely, we have
\[
\mathcal{B}_{(4,2)} = 
\left\{
\:
\begin{tikzpicture}[scale=.3, baseline=(current bounding box.center)]
    \draw [dashed]
    (1,-1) rectangle (5,-6);

    \draw (1,-1) rectangle ++(1,-1);
    \draw (2,-1) rectangle ++(1,-1);
    \draw (3,-1) rectangle ++(1,-1);
    \draw (4,-1) rectangle ++(1,-1);
    \draw (1,-2) rectangle ++(1,-1);
    \draw (2,-2) rectangle ++(1,-1);

    \node at (5.5,-6) {,};
\end{tikzpicture}
\begin{tikzpicture}[scale=.3, baseline=(current bounding box.center)]
    \draw [dashed]
    (1,-1) rectangle (5,-6);

    \draw (1,-1) rectangle ++(1,-1);
    \draw (2,-1) rectangle ++(1,-1);
    \draw (3,-1) rectangle ++(1,-1);
    \draw (4,-1) rectangle ++(1,-1);
    \draw (1,-2) rectangle ++(1,-1);
    \draw (1,-3) rectangle ++(1,-1);

    \node at (5.5,-6) {,};
\end{tikzpicture}
\begin{tikzpicture}[scale=.3, baseline=(current bounding box.center)]
    \draw [dashed]
    (1,-1) rectangle (5,-6);

    \draw (1,-1) rectangle ++(1,-1);
    \draw (2,-1) rectangle ++(1,-1);
    \draw (3,-1) rectangle ++(1,-1);
    \draw (1,-2) rectangle ++(1,-1);
    \draw (2,-2) rectangle ++(1,-1);
    \draw (3,-2) rectangle ++(1,-1);

    \node at (5.5,-6) {,};
\end{tikzpicture}
\begin{tikzpicture}[scale=.3, baseline=(current bounding box.center)]
    \draw [dashed]
    (1,-1) rectangle (5,-6);

    \draw (1,-1) rectangle ++(1,-1);
    \draw (2,-1) rectangle ++(1,-1);
    \draw (3,-1) rectangle ++(1,-1);
    \draw (1,-2) rectangle ++(1,-1);
    \draw (2,-2) rectangle ++(1,-1);
    \draw (1,-3) rectangle ++(1,-1);

    \node at (5.5,-6) {,};
\end{tikzpicture}
\begin{tikzpicture}[scale=.3, baseline=(current bounding box.center)]
    \draw [dashed]
    (1,-1) rectangle (5,-6);

    \draw (1,-1) rectangle ++(1,-1);
    \draw (2,-1) rectangle ++(1,-1);
    \draw (3,-1) rectangle ++(1,-1);
    \draw (1,-2) rectangle ++(1,-1);
    \draw (1,-3) rectangle ++(1,-1);
    \draw (1,-4) rectangle ++(1,-1);

    \node at (5.5,-6) {,};
\end{tikzpicture}
\begin{tikzpicture}[scale=.3, baseline=(current bounding box.center)]
    \draw [dashed]
    (1,-1) rectangle (5,-6);

    \draw (1,-1) rectangle ++(1,-1);
    \draw (2,-1) rectangle ++(1,-1);
    \draw (1,-2) rectangle ++(1,-1);
    \draw (2,-2) rectangle ++(1,-1);
    \draw (1,-3) rectangle ++(1,-1);
    \draw (2,-3) rectangle ++(1,-1);

    \node at (5.5,-6) {,};
\end{tikzpicture}
\begin{tikzpicture}[scale=.3, baseline=(current bounding box.center)]
    \draw [dashed]
    (1,-1) rectangle (5,-6);

    \draw (1,-1) rectangle ++(1,-1);
    \draw (2,-1) rectangle ++(1,-1);
    \draw (1,-2) rectangle ++(1,-1);
    \draw (2,-2) rectangle ++(1,-1);
    \draw (1,-3) rectangle ++(1,-1);
    \draw (1,-4) rectangle ++(1,-1);

    \node at (5.5,-6) {,};
\end{tikzpicture}
\begin{tikzpicture}[scale=.3, baseline=(current bounding box.center)]
    \draw [dashed]
    (1,-1) rectangle (5,-6);

    \draw (1,-1) rectangle ++(1,-1);
    \draw (2,-1) rectangle ++(1,-1);
    \draw (1,-2) rectangle ++(1,-1);
    \draw (1,-3) rectangle ++(1,-1);
    \draw (1,-4) rectangle ++(1,-1);
    \draw (1,-5) rectangle ++(1,-1);

    \node at (4,-6) {\phantom{,}};
\end{tikzpicture}
\:
\right\}.
\]

For certain cycle types $\alpha$, the containment in~\eqref{sh fit in box} is, in fact, an equality.
We can thus reframe our main problem as follows: classify the cycle types $\alpha$ such that $\mathcal{S}_\alpha = \mathcal{B}_\alpha$, and for the remaining cycle types $\alpha$, determine the complement $\mathcal{B}_{\alpha} \setminus \mathcal{S}_\alpha$.
The following theorem is the main result of this paper, where we solve the problem in the case $r=2$, that is, where $\alpha = (\alpha_1, \alpha_2)$.

\begin{theorem}
    \label{thm:main result}

    Let $n$ be a positive integer, and let $\alpha = (\alpha_1, \alpha_2) \vdash n$.

    \begin{enumerate}
        \item If $n$ is odd, then $\mathcal{S}_\alpha = \mathcal{B}_\alpha$.
        
        \item If $n$ is even, then $\mathcal{S}_\alpha = \mathcal{B}_\alpha$ unless $\alpha$ occurs in the following table:

        \end{enumerate}
    \[
    \renewcommand{\arraystretch}{1.5}
    \begin{array}{l|l}
        \alpha & \mathcal{B}_{\alpha} \setminus \mathcal{S}_\alpha \\ \hline
        (n-1, \: 1) & \{\left(\frac{n}{2}, \frac{n}{2}\right)\} \\
        \left(\frac{n}{2}, \frac{n}{2}\right), \textup{ where $4 \mid n$} & \{(n-2, 1, 1), \: (3,1, \ldots, 1)\} \\
        \left(\frac{n}{2}, \frac{n}{2}\right), \textup{ where $4 \nmid n$} & \{(n-2, 1, 1)\} \\
        (4,2) & \{(2,2,2)\} \\
        (5,3) & \{(2,2,2,2)\}
    \end{array}
    \]
    \end{theorem}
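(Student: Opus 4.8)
The inclusion $\mathcal{S}_\alpha \subseteq \mathcal{B}_\alpha$ is exactly Theorem~\ref{thm:box}, so the whole content of the theorem is the reverse inclusion, corrected by the exceptional table. Concretely, for each $\lambda \in \mathcal{B}_\alpha$ I must either exhibit a $\sigma \in \mathcal{C}_\alpha$ with $\sh(\sigma) = \lambda$, or prove that no such $\sigma$ exists. The plan is to separate these two tasks: a uniform \emph{construction} that realizes all but finitely many shapes by means of $\alpha$-colorings, and a collection of \emph{obstruction} arguments that dispose of the entries of the table. Throughout I will use the standard fact that the reverse column word of any $T \in \SYT(\lambda)$ row-inserts to a tableau of shape $\lambda$, so that shape is the invariant I can control at the level of words.

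For the construction I would fix $\lambda \in \mathcal{B}_\alpha$ and build an \emph{admissible} $\alpha$-coloring of a standard tableau of shape $\lambda$: a $2$-coloring of the entries with colour classes of sizes $\alpha_1$ and $\alpha_2$, the two classes recording the supports of the two intended cycles of $\sigma$. By the defining property of an $\alpha$-coloring, such a coloring encodes a permutation $\sigma \in \mathcal{C}_\alpha$ whose one-line notation has RS shape $\lambda$, so that $\lambda \in \mathcal{S}_\alpha$. The real work is to show that an admissible $\alpha$-coloring exists for every $\lambda \in \mathcal{B}_\alpha$ outside the exceptional list. I would do this by an explicit cell-by-cell recipe, organized according to how close $\lambda$ is to the two extreme corners of the bounding box (the widest and the tallest shapes allowed by~\eqref{B}); in each regime one checks, via Greene's theorem, that the maximal unions of increasing and of decreasing subsequences of the associated reverse column word reproduce $\lambda$ and its conjugate.

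The parity of $n$ enters through the obstructions rather than the construction. Every shape/cycle-type pair that forces a failure below will be seen to require $n$ even: either the cycle type $(\tfrac{n}{2},\tfrac{n}{2})$ exists only for even $n$, or the offending shape (such as $(\tfrac{n}{2},\tfrac{n}{2})$) is a partition only when $n$ is even. Consequently, for odd $n$ the recipe realizes every $\lambda \in \mathcal{B}_\alpha$, which is part (1). For even $n$, the construction can break down only at the \emph{extremal} shapes meeting the row or column bound of~\eqref{B}, and these are exactly the candidates in the table, which I would then examine one family at a time. For $\alpha = (n-1,1)$ and $\lambda = (\tfrac{n}{2},\tfrac{n}{2})$, Greene's theorem says any $\sigma$ of shape $\lambda$ is a union of two increasing subsequences of length $\tfrac{n}{2}$; a short interleaving analysis then shows such a $\sigma$ cannot consist of an $(n-1)$-cycle and a fixed point (the $n=4$ instance already shows that only the cycle types $(2,2)$ and $(4)$ occur among the four permutations of shape $(2,2)$). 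The family $\alpha = (\tfrac{n}{2},\tfrac{n}{2})$ is handled by the analogous analysis of the near-hook shapes $(n-2,1,1)$ and $(3,1^{\,n-3})$, where the split into $4 \mid n$ and $4 \nmid n$ reflects whether the extra hook shape admits an admissible coloring. The two sporadic rows $(4,2)$ and $(5,3)$ lie below the threshold at which the uniform recipe applies, so I would verify by finite inspection that $(2,2,2)$ and $(2,2,2,2)$ are the unique missing box shapes.

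\textbf{Main obstacle.} I expect the difficulty to concentrate in two places. First, proving that the explicit $\alpha$-coloring recipe is genuinely admissible for \emph{all} interior shapes: the Greene-theorem bookkeeping that the colour classes interleave so as to produce exactly $\lambda$ and its conjugate is delicate near the boundary of the box, where there is little slack. Second, the non-realizability of the extremal shapes for the two infinite families. Here the forced sign $(-1)^n$ of a two-cycle permutation is necessary but far from sufficient, since shapes such as $(\tfrac{n}{2},\tfrac{n}{2})$ also carry permutations of the correct sign but the wrong cycle type; the crux is to show that the rigidity of the extremal shape precludes the \emph{precise} cycle type $\alpha$, a structural constraint that the sign invariant alone does not detect.
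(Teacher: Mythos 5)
Your overall architecture matches the paper's (Theorem~\ref{thm:box} for the containment, explicit colorings for attainability, ad hoc obstruction arguments for the table), but the plan as written has a genuine gap: you implicitly treat ``$\lambda \in \mathcal{S}_\alpha$'' as equivalent to ``some admissible $Q \in \SYT(\lambda)$ carries an $\alpha$-coloring,'' and that equivalence is false. For the infinite family $\alpha = \left(\frac{n}{2}, \frac{n}{2}\right)$ with $\frac{n}{2}$ odd and $\lambda = (2,2,1^{n-4})$, \emph{no} admissible $Q$ admits an $\alpha$-coloring, yet $\lambda \in \mathcal{S}_\alpha$; the paper handles this in case (1a) of Construction~\ref{constr:2-column shapes} by exhibiting the permutation $\sigma = [n-1, \ldots, \frac{n}{2}+1, 1, n, \frac{n}{2}, \ldots, 2]$ directly (its recording tableau is not admissible). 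Your uniform recipe would either be stuck on this family or, worse, your heuristic ``no coloring exists $\Rightarrow$ shape is an exception'' would wrongly add it to the table. The same conflation appears in your explanation of the $4 \mid n$ versus $4 \nmid n$ split for $(3,1,\ldots,1)$: non-existence of a coloring is never by itself a proof of non-attainability; the paper must run a separate cascade-of-inequalities argument (classifying the possible cycle structures into types I and II and deriving $t_{\ell+1} > u_{\ell+1}$ from $t_{\ell+1} < u_{\ell+1}$) to show $\lambda \notin \mathcal{S}_\alpha$ when $4 \mid n$.

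A second, smaller flaw is your organizing claim that for even $n$ the construction ``can break down only at the extremal shapes meeting the row or column bound of~\eqref{B}.'' This is false for most table entries: $\left(\frac{n}{2},\frac{n}{2}\right)$ is nowhere near extremal in the $(n-1)\times(n-1)$ box $\mathcal{B}_{(n-1,1)}$, and $(2,2,2)$, $(2,2,2,2)$ are interior to $\mathcal{B}_{(4,2)}$, $\mathcal{B}_{(5,3)}$ respectively. So this principle cannot be used to restrict the candidate exceptions, and the full burden of proving attainability for every non-table shape falls on the explicit constructions, which you only sketch. One genuine difference in verification strategy is worth noting: you propose checking shapes via Greene's theorem applied to the reverse column word, whereas the paper's Proposition~\ref{prop:MM} and Corollary~\ref{corollary:MM column-canonical} reduce everything to checking that $\sigma \cdot Q^\uparrow$ is a standard tableau, which is what makes the eight case-by-case constructions of Section~\ref{sec:alpha-colorings} (spiral colorings, mixing columns, tail alternation, and the sporadic small cases) tractable; Greene-style subsequence bookkeeping for each coloring would be substantially more delicate, as you yourself anticipate in your ``main obstacle'' paragraph.
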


Our Theorem~\ref{thm:main result} generalizes the main result of~\cite{RS-complete-Rubinstein}, which can be restated as follows, using the language of the present paper:
\begin{enumerate}
    \item If $n$ is odd, then $\mathcal{S}_{(n)} = \mathcal{B}_{(n)}$ and $\mathcal{S}_{(n-1,1)} = \mathcal{B}_{(n-1,1)}$.
    \item If $n$ is even, then $\mathcal{S}_{(n)} = \mathcal{B}_{(n)}$ and $\mathcal{B}_{(n-1,1)} \setminus \mathcal{S}_{(n-1,1)} = \left\{\left(\frac{n}{2}, \frac{n}{2}\right)\right\}$.
    
\end{enumerate}
We emphasize that by introducing the bounding box $\mathcal{B}_\alpha$, we are able to state these results in a uniform manner, thus avoiding the need to designate certain ``trivial shapes'' and to make exceptions for small values of $n$. 
(Compare with the original formulation in~\cite{RS-complete-Rubinstein} given in Theorem~\ref{thm: original paper main result} below.)

\subsection*{Admissible tableaux and $\alpha$-colorings}

In order to prove Theorem~\ref{thm:main result} (see the end of Section~\ref{sec: unattainable}), we introduce two key combinatorial objects, which we call \emph{admissible tableaux} and \emph{$\alpha$-colorings}.
(See Definitions~\ref{def:admissible} and~\ref{def: alpha coloring}.)
We say a standard tableau is \emph{admissible} if it remains standard when justified along the bottom (i.e., when acted on by ``gravity'').
It turns out (see Proposition~\ref{prop:MM}) that the admissibility of a tableau $Q$ is equivalent to the following: for any tableau $P$ of the same shape, the permutation ${\rm RS}^{-1}(P,Q)$ can be read off from $P$ by following the order of the entries in the vertical reflection of $Q$ (which we denote by $Q^\uparrow$).
In turn, for an admissible $Q \in \SYT(\lambda)$, an \emph{$\alpha$-coloring} of $Q^\uparrow$ is a coloring of its entries which, via a certain canonical spiral construction, produces a permutation $\sigma \in \mathcal{C}_\alpha$ with $\sh(\sigma) = \lambda$.
This $\sigma$ cyclically permutes the entries of each color in $Q^\uparrow$ to obtain a standard $P$ such that $\sigma = {\rm RS}^{-1}(P,Q)$.
The crux of the paper is therefore Section~\ref{sec:alpha-colorings}, where we construct $\alpha$-colorings for all shapes $\lambda$, with the exception of the pairs $(\alpha,\lambda)$ given above in the table in Theorem~\ref{thm:main result}.
(There is also one family $(\alpha,\lambda)$ such that $\lambda \in \mathcal{S}_\alpha$ but no $\alpha$-coloring exists; in this case we exhibit the requisite $\sigma$ directly.)

\subsection*{Open problems and conjectures}

As a first step toward extending the results in this paper to generic cycle types $(\alpha_1, \ldots, \alpha_r)$ where $r > 2$, we point out a special case in which we can describe $\mathcal{S}_\alpha$ explicitly, namely when $\alpha_1 \leq 2$:
\[
\mathcal{S}_{(2^{r-k}, \: 1^{k})} = \left\{ \lambda \vdash n : \text{$\lambda$ has exactly $k$ many columns of odd length} \right\}.
\]
(See Proposition~\ref{prop:schutz}, following from a result of Sch\"utzenberger relating to involutions.)
Outside of this special case described above, however, for $r>2$, an explicit and comprehensive description of $\mathcal{S}_\alpha$ quickly becomes quite complicated.
Somewhat surprisingly, the source of these complications lies entirely in the presence of repeated values among the $\alpha_i$'s.
In fact (see Conjecture~\ref{conj: distinct ct}), it appears that for $r > 2$, we have $\mathcal{S}_{\alpha} = \mathcal{B}_{\alpha}$ whenever $\alpha_1 > \cdots > \alpha_r > 1$.
(It would then follow that for a fixed $r$, as $n \rightarrow \infty$, the proportion of cycle types satisfying $\mathcal{S}_{\alpha} = \mathcal{B}_{\alpha}$ approaches 100\%.)
See Section~\ref{sec:conjectures} for additional details, along with further conjectures and open problems concerning cycle types and $\alpha$-colorings.

\section{Preliminaries}

\subsection*{Partitions}

We write $\N$ for the set of positive integers.
For $n \in \N$, a \emph{partition} of $n$ is a weakly decreasing finite sequence $\lambda = (\lambda_1, \ldots, \lambda_\ell)$ of positive integers, such that $\sum_{i=1}^\ell \lambda_i = n$.
We call $n$ the \emph{size} of $\lambda$, which we express by writing $\lambda \vdash n$.
We will often use exponents to denote repeated parts in a partition; for example, $(4^3,2,1^2)$ is shorthand for $(4,4,4,2,1,1)$.
The \emph{Young diagram of shape $\lambda$} is the left-justified array with $\lambda_i$ boxes in the $i$th row, counting from the top. 
As an example, the Young diagram of shape $\lambda = (3,2,1,1)$ is given in Figure~\ref{subfig:YD}.
For brevity, we often refer to the ``rows/columns of $\lambda$'' or similar phrases, where it is understood that we are speaking of the Young diagram of $\lambda$.

Given the Young diagram of a partition $\lambda$, the \emph{tail} of the diagram is the part of the first row that extends beyond the second row; in other words, the tail consists of the rightmost $\lambda_1 - \lambda_2$ many boxes in the first row.
When $\lambda_1 > \lambda_2 = 1$, we say that $\lambda$ is a \emph{hook}, due to the shape of its Young diagram (the union of a single row and a single column).

Since we will often consider the column lengths rather than the row lengths, we recall the \emph{conjugate partition} of $\lambda$, denoted by $\lambda' = (\lambda'_1, \ldots, \lambda'_m)$, where  $m = \lambda_1$, and where $\lambda'_j$ denotes the length of the $j$th column of the Young diagram of $\lambda$, or equivalently
\[
\lambda'_j=\#\{i : \lambda_i \geq j\}.
\]
For example, if $\lambda = (3,2,1,1)$ is the partition with Young diagram shown in Figure~\ref{subfig:YD}, then we have $\lambda' = (4,2,1)$.
We emphasize that whenever we depict $\lambda$ by its Young diagram, the row lengths are given by $\lambda_1, \ldots, \lambda_\ell$, while the column lengths are given by $\lambda'_1, \ldots, \lambda'_m$.

\subsection*{The symmetric group and related notation}

Let $S_n$ denote the symmetric group on $n$ letters.
One can view elements of $S_n$ as permutations $\sigma = [\sigma_1, \ldots, \sigma_n]$ of the list $(1,\ldots, n)$.
This is known as the \emph{one-line notation} for $\sigma$.
In this paper, we will primarily express elements $\sigma \in S_n$ using \emph{cycle notation}, namely, as a product of disjoint cycles written in rounded parentheses.
As an example of these two methods of notation, we have
\[
[2, 4, 9, 7, 10, 3, 1, 14, 6, 12, 11, 5, 13, 8] = (1,2,4,7) (3, 9, 6) (5,10,12) (8, 14) (11)(13).
\]
The \emph{cycle type} of an element $\sigma \in S_n$ is the partition given by the lengths of the disjoint cycles in the cycle notation for $\sigma$.
In the example displayed above, $\sigma$ has cycle type $(4,3,3,2,1,1)$.
It is well known that the conjugacy classes in $S_n$ are precisely the equivalence classes induced by cycle type.
In other words, there is a bijection between partitions of $n$ and conjugacy classes of $S_n$, where each partition $\alpha \vdash n$ corresponds to the conjugacy class 
\[
\mathcal{C}_\alpha \coloneqq \{\sigma \in S_n : \sigma \text{ has cycle type } \alpha \}.
\]

\begin{figure}[t]
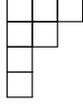
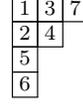

     \centering
     \ytableausetup{smalltableaux}
     \begin{subfigure}[b]{0.4\textwidth}
         \centering
         \ydiagram{3,2,1,1}
         
         \caption{The Young diagram of shape $\lambda$.}
         \label{subfig:YD}
     \end{subfigure}
     \qquad
     \begin{subfigure}[b]{0.4\textwidth}
         \centering
         \ytableaushort{137,24,5,6}
         \caption{An element of $\SYT(\lambda)$.}
         \label{subfig:SYT}
     \end{subfigure}
     \caption{Example where $\lambda = (3,2,1,1)$.
     Note that $\lambda' = (4,2,1)$.}
        \label{fig:YD and SYT}
\end{figure}

\subsection*{The Robinson--Schensted correspondence}

For $\lambda \vdash n$, a \emph{standard Young tableau of shape $\lambda$} is a Young diagram of shape $\lambda$, where the $n$ boxes are filled with distinct entries from $\{1, \ldots, n\}$ such that every row and column is increasing.
(See Figure~\ref{subfig:SYT} for an example.)
Let $\SYT(\lambda)$ denote the set of all standard Young tableaux of shape $\lambda$.
As described in the introduction, the \emph{Robinson--Schensted (RS) correspondence} is a bijection 
\begin{equation}
\label{RSK bijection}
S_n\ \xrightarrow{{\rm RS}}\ \coprod_{\lambda \vdash n} \SYT(\lambda) \times \SYT(\lambda),
\end{equation}
which we give explicitly below.
(See also the geometric \emph{light-and-shadow} interpretation due to Viennot~\cite{Viennot}, \emph{growth diagrams} due to Fomin \cite{StanleyEC2}*{\S 7.13},
and the \emph{jeu de taquin} interpretation \cite{StanleyEC2}*{\S A1.3}.)

Let $\sigma = [\sigma_1, \ldots, \sigma_n] \in S_n$, and let $(P,Q)$ be the image of $\sigma$ under the RS correspondence.
We write this as $\operatorname{RS}(\sigma) = (P,Q)$.
The tableaux $P$ and $Q$ are constructed recursively as follows.
Begin by defining both $P_0 = P_0(\sigma)$ and $Q_0 = Q_0(\sigma)$ to be the empty tableau.
Then for each $i = 1, \ldots, n$, we construct the tableaux $P_i = P_i(\sigma)$ and $Q_i = Q_i(\sigma)$ as follows.
The tableau $P_i$ is obtained from $P_{i-1}$ via the following \emph{row insertion} algorithm:
\begin{enumerate}
    \item Set $a_1 \coloneqq \sigma_i$, and initialize $k=1$.
    \item If $a_k$ is greater than the rightmost entry in row $k$ of $P_{i-1}$, then add a box labeled $a_k$ to the end of that row, and terminate the construction of $P_i$. 
    \item Otherwise, set $a_{k+1}$ equal to the leftmost entry in row $k$ of $P_{i-1}$ which is greater than $a_k$, and replace this entry with $a_k$.
    Now increment $k$ by $1$.
    \item Repeat steps (2) and (3) until the process terminates, resulting in the tableau $P_i$.
\end{enumerate}
(Step 3 above is often described as the entry $a_k$ ``bumping'' the entry $a_{k+1}$ into the next row.)
Note that the shape of $P_i$ is that of $P_{i-1}$ with the addition of one new box; to construct $Q_i$, add a new box labeled $i$ to the corresponding position in $Q_{i-1}$.
After repeating this process to construct $(P_1,Q_1), \ldots, (P_n,Q_n)$, the pair $(P,Q)$ is taken to be the final pair $(P_n, Q_n)$.
(See Figure \ref{fig: ex of RSK} for an example of this algorithm, the result of which was shown in~\eqref{example intro} in the introduction.)
Note that $P$ and $Q$ share the same shape; if this shape is $\lambda$, then we call $\lambda$ the \emph{RS shape} of $\sigma$, and we write $\sh(\sigma) = \lambda$.

\begin{figure}[t]
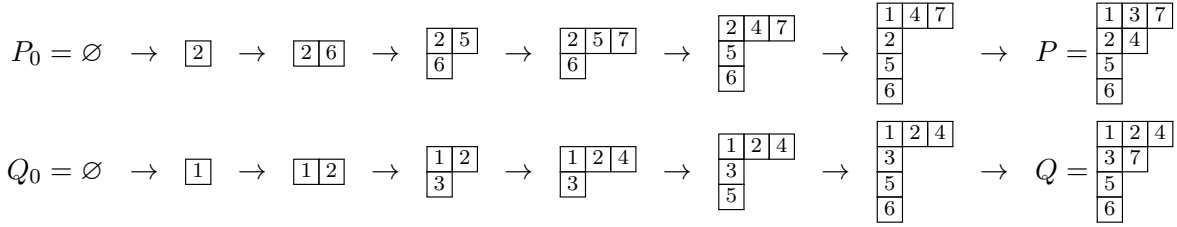
 
    \centering

    \[
    \begin{array}{rcccccccccccccl}
    \ytableausetup{smalltableaux}
    P_0 = \varnothing & \rightarrow & \ytableaushort{2}
    & \rightarrow &
    \ytableaushort{26}
    &\rightarrow&
    \ytableaushort{25,6}
    &\rightarrow&
    \ytableaushort{257,6}
    &\rightarrow&
    \ytableaushort{247,5,6}
    &\rightarrow&
    \ytableaushort{147,2,5,6}
    &\rightarrow&
    P = 
    \ytableaushort{137,24,5,6} \\[4ex]
    Q_0 = \varnothing & \rightarrow & \ytableaushort{1}
    & \rightarrow &
    \ytableaushort{12}
    &\rightarrow&
    \ytableaushort{12,3}
    &\rightarrow&
    \ytableaushort{124,3}
    &\rightarrow&
    \ytableaushort{124,3,5}
    &\rightarrow&
    \ytableaushort{124,3,5,6}
    &\rightarrow&
    Q = 
    \ytableaushort{124,37,5,6}

    \end{array}
    \]
    \caption{Details of the example~\eqref{example intro} in the introduction.
    Given the input $\sigma = [2,6,5,7,4,1,3] = (3,5,4,7)(1,2,6)$, the RS algorithm produces the output $\operatorname{RS}(\sigma) = (P,Q)$.
    Note that $\sh(\sigma) = (3,2,1,1)$.}
    \label{fig: ex of RSK}
\end{figure}

If $\sh(\sigma) = \lambda$, then the row lengths $\lambda_i$ and column lengths $\lambda'_j$ encode information regarding the increasing and decreasing subsequences in the one-line notation of $\sigma$, as follows.
Let 
\begin{align*}
    a_k(\sigma)\coloneqq & \; \text{maximum length of the union of} \\[-1ex]
    &\; \text{$k$ disjoint \emph{ascending} subsequences in $[\sigma_1, \ldots, \sigma_n]$},\\
    d_k(\sigma)\coloneqq & \; \text{maximum length of the union of} \\[-1ex]
    &\; \text{$k$ disjoint \emph{descending} subsequences in $[\sigma_1, \ldots, \sigma_n]$}.
\end{align*}
Generalizing a theorem of Schensted, Greene \cite{Greene--Schensted-Extension}*{Thm.~3.1} showed  that
\begin{equation} \label{eq: desc asc description of cols and rows of RSK shape}
    a_k(\sigma) = \sum_{i=1}^k \lambda_i \quad \text{and} \quad d_k(\sigma) = \sum_{j=1}^k \lambda'_j.
\end{equation}
In the special case $k=1$ (Schensted's original theorem), this says that the number of columns (resp., rows) in $\lambda$ equals the length of the longest ascending (resp., descending) subsequence in $[\sigma_1, \ldots, \sigma_n]$.

\subsection*{Main problem and preliminary results}

Having defined the conjugacy classes $\mathcal{C}_\alpha$ in $S_n$, along with the RS shape of an element $\sigma \in S_n$, we now introduce the main object of study in this paper, namely the set of RS shapes that arise from the elements of a given conjugacy class.
Given a cycle type $\alpha \vdash n$, define
\begin{equation}
    \label{S_alpha}
    \mathcal{S}_\alpha \coloneqq \{ \sh(\sigma) : \sigma \in \mathcal{C}_\alpha\}.
\end{equation}
The recent paper~\cite{RS-complete-Rubinstein} described the set $\mathcal{S}_\alpha$ in the three general cases below.

\begin{theorem}[{\cite{RS-complete-Rubinstein}*{Theorems 4.2, 5.1}}] \label{thm: original paper main result}
    Let $n\in\N$. Then:
    \begin{enumerate}
        \item $\mathcal{S}_{(1)} = \{(1)\}$, $\mathcal{S}_{(2)} = \{(1^2)\}$, and, for $n\geq 3$, we have
        $\mathcal{S}_{(n)} = \{ \lambda \vdash n \} \setminus \{(1^n), (n)\}$.
        \item $\mathcal{S}_{(2,1)} = \{(2,1), \: (1^3) \}$ and, for odd $n\geq  5$, we have $\mathcal{S}_{(n-1, 1)} = \{ \lambda \vdash n \} \setminus \{(1^{n}), \: (n)\}$.
        \item $\mathcal{S}_{(1,1)} = \{(2)\}$ and, for even $n\geq 4$, we have $\mathcal{S}_{(n-1, 1)} = \{\lambda \vdash n\} \setminus \{(1^{n}), \: (n), \: \left(\frac{n}{2}, \frac{n}{2}\right)\}$.
    \end{enumerate}
\end{theorem}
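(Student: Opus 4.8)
The plan is to prove each of the three cases by establishing two complementary directions: that the listed shapes are genuinely \emph{unattainable}, and that every remaining partition is \emph{attained} by some permutation of the prescribed cycle type. The small-$n$ identities $\mathcal{S}_{(1)}$, $\mathcal{S}_{(2)}$, $\mathcal{S}_{(2,1)}$, $\mathcal{S}_{(1,1)}$ I would simply verify by hand; these are precisely the degenerate cases in which the would-be forbidden shapes $(n)$ and $(1^n)$ happen to coincide with attainable ones. The two universal exclusions follow at once from the bijectivity of RS together with Schensted's theorem (the $k=1$ case of \eqref{eq: desc asc description of cols and rows of RSK shape}): the one-row shape $(n)$ has the single preimage $\mathrm{RS}^{-1}$ equal to the identity, of cycle type $(1^n)$, while the one-column shape $(1^n)$ has the single preimage $w_0 = [n,n-1,\ldots,1]$, of cycle type $(2^{\lfloor n/2\rfloor})$ for even $n$ and $(2^{(n-1)/2},1)$ for odd $n$. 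Comparing these against $(n)$ and $(n-1,1)$ yields both the small-$n$ coincidences and the exclusion of $(n)$ and $(1^n)$ for all larger $n$.

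The one substantive exclusion is $(m,m)\notin\mathcal{S}_{(n-1,1)}$ for even $n=2m$. Here I would first invoke Greene's identities \eqref{eq: desc asc description of cols and rows of RSK shape} to recast the condition $\sh(\sigma)=(m,m)$ as: $\sigma$ is $321$-avoiding (longest descending subsequence of length $2$) with longest ascending subsequence exactly $m$. Now suppose such a $\sigma$ had cycle type $(n-1,1)$ with its unique fixed point at position $k$, so $\sigma(k)=k$. A value exceeding $k$ to the left of position $k$, together with a value below $k$ to its right, would form with $k$ a descending subsequence of length $3$; since none is permitted, either positions $1,\ldots,k-1$ carry exactly the values $1,\ldots,k-1$, or positions $k+1,\ldots,n$ carry exactly $k+1,\ldots,n$. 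In the former case $\sigma$ is a direct sum permuting $\{1,\ldots,k-1\}$ and $\{k+1,\ldots,n\}$ separately while fixing $k$, so for $1<k<n$ it cannot be a single $(n-1)$-cycle; the latter case is symmetric. This leaves only $k=1$ (and, by the reverse--complement symmetry $\sigma\mapsto w_0\sigma w_0$, which preserves both shape and cycle type, $k=n$). But if $\sigma(1)=1$, then deleting the fixed point leaves an $(n-1)$-cycle $\tau$ on $\{2,\ldots,n\}$ with longest ascending subsequence $m-1$ and longest descending subsequence $2$, forcing the RS shape of $\tau$ to be $(m-1,m)$, which is not a partition. This contradiction finishes the exclusion.

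For the attainability direction, the plan is an inductive, box-by-box construction. Beginning from the base cases, I would build the Young diagram of a target $\lambda$ one cell at a time, at each step adjoining a cell to the first row or the first column, and mirror each move by a local modification of the one-line notation of a permutation of the prescribed cycle type -- splicing a new largest (or smallest) letter into the existing long cycle so as to keep the cycle type equal to $(n)$ or $(n-1,1)$, while using \eqref{eq: desc asc description of cols and rows of RSK shape} to certify that the RS shape changes exactly as intended. The freedom in \emph{where} to splice the new letter into the cyclic order is what lets one steer the shape toward an arbitrary admissible $\lambda$ (and, in the $(n-1,1)$ case, place the fixed point correctly), always avoiding the handful of excluded shapes.

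I expect the real difficulty to lie entirely in this attainability step, since one must control the RS shape and the cycle type \emph{simultaneously}: a local edit that keeps $\sigma$ a single long cycle can perturb several Greene invariants at once, so the bookkeeping of which splice produces which box -- together with the verification that the process never wanders outside the allowed shapes -- is the delicate part. By contrast, the exclusion direction above is self-contained once the $321$-avoiding characterization of $(m,m)$ is in hand, and it is there that the only genuinely structural phenomenon (the direct-sum obstruction around a fixed point, and the parity mismatch $(m-1,m)$ at the boundary) appears.
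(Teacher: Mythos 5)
Your exclusion arguments are sound. The identification of the unique RS preimages of $(n)$ and $(1^n)$ (the identity and $w_0$) correctly disposes of the universal exclusions and the small-$n$ coincidences, and your proof that $\left(\frac{n}{2},\frac{n}{2}\right)\notin\mathcal{S}_{(n-1,1)}$ for even $n$ is essentially the argument the paper gives in part (1) of Lemma~\ref{lemma:unattainable}: both proofs begin with the same dichotomy (no value exceeding $k$ left of the fixed point together with a value below $k$ to its right, by $321$-avoidance), and both then derive a contradiction from the $(n-1)$-cycle $\tau$ having longest ascending subsequence $m-1$ on $2m-1$ letters --- you phrase this as the impossible shape $(m-1,m)$, the paper as a violation of Greene's $k=2$ identity; these are equivalent. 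Your direct-sum observation that forces $k\in\{1,n\}$ is a slightly different (and clean) route to the same configuration the paper reaches via its restriction $\sigma|_\tau$.

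The genuine gap is the attainability half, which is the bulk of the theorem and which you have only sketched. Concretely: your inductive scheme builds $\lambda$ ``one cell at a time, at each step adjoining a cell to the first row or the first column,'' but such moves generate only hook shapes --- already $\lambda=(2,2)$ requires adding a cell in row $2$, column $2$, so the induction as described never reaches generic shapes. Moreover, the step you defer (``splicing a new largest letter into the existing long cycle'' while certifying via Greene's invariants that exactly the intended box is added) is precisely the hard content, and there is no argument that a suitable splice always exists: inserting $n$ into the cycle alters the one-line notation at two positions at once and can change several $a_k,d_k$ simultaneously, as you yourself note. The paper avoids induction entirely. For cycle type $(n)$ it gives the explicit spiral ``canonical cycle'' of any shape $\lambda\in\mathcal{B}_{(n)}$ (Construction~\ref{constr:canonical cycle}, verified in Lemma~\ref{lemma:canonical cycle} via the admissibility machinery of Proposition~\ref{prop:MM}), which yields $\mathcal{S}_{(n)}=\mathcal{B}_{(n)}$ in one stroke; for cycle type $(n-1,1)$ it modifies this by placing the single fixed point either at the entry $n$ or at the axis of symmetry of the first column (Construction~\ref{constr:1}), with the $\left(\frac{n}{2},\frac{n}{2}\right)$ case excluded. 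To complete your proof you would need either to repair the induction (allowing arbitrary corner additions and proving a splice lemma for each corner, with simultaneous control of cycle type) or to replace it with an explicit construction of this kind.
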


Our main result (Theorem~\ref{thm:main result}, stated in the introduction) generalizes Theorem~\ref{thm: original paper main result} by describing $\mathcal{S}_\alpha$ for all cycle types $\alpha = (\alpha_1, \alpha_2)$.
To this end, we begin by establishing a general bound on the shapes which can appear in $\mathcal{S}_\alpha$.
Given $\alpha = (\alpha_1, \ldots, \alpha_r) \vdash n$, we recall from the introduction~\eqref{B} the following set of partitions whose Young diagram fits inside a certain box:

\begin{equation}
\label{B again}
   \mathcal{B}_\alpha 
   \coloneqq \left\{ \lambda 
   \vdash n : \;\; \parbox{35ex}{$\lambda_1' \leq n - r + \#\{i : \alpha_i = 2\} + \delta_{1,\alpha_r}$, \\
   $\lambda_1 \leq n - r + \#\{i : \alpha_i = 1 \}$} \right\},
\end{equation}
where $\delta_{1, \alpha_r}$ is the Kronecker delta (taking the value 1 if $\alpha$ contains a 1, and 0 otherwise).

\begin{theorem}
\label{thm:box}
    Let $\alpha \vdash n$, with $\mathcal{S}_\alpha$ and $\mathcal{B}_\alpha$ as in~\eqref{S_alpha} and~\eqref{B again}.
    We have $  \mathcal{S}_\alpha \subseteq \mathcal{B}_\alpha$.
\end{theorem}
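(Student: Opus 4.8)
The plan is to reduce both inequalities defining $\mathcal{B}_\alpha$ to bounds on the longest increasing and longest decreasing subsequences of the one-line notation of $\sigma$, and then to bound those subsequence lengths one cycle at a time. Fix $\sigma \in \mathcal{C}_\alpha$ with $\sh(\sigma) = \lambda$; since $\sh(\sigma) \vdash n$ automatically, only the two size inequalities need to be checked. By the $k=1$ case of the Greene--Schensted result recorded in~\eqref{eq: desc asc description of cols and rows of RSK shape}, $\lambda_1 = a_1(\sigma)$ is the length of a longest ascending subsequence of $[\sigma_1, \ldots, \sigma_n]$ and $\lambda_1' = d_1(\sigma)$ is the length of a longest descending subsequence. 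Thus it suffices to prove $a_1(\sigma) \leq n - r + \#\{i : \alpha_i = 1\}$ and $d_1(\sigma) \leq n - r + \#\{i : \alpha_i = 2\} + \delta_{1,\alpha_r}$.

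The key observation is that if $C \subseteq \{1, \ldots, n\}$ is the support of a single cycle of $\sigma$, then $\sigma$ permutes $C$, so the positions and the values of $\sigma$ inside $C$ both coincide with $C$. After relabeling $C$ by the relative order of its elements, the restriction $\sigma|_C$ is an honest permutation of a set of size $\ell = |C|$. Because the cycles partition $\{1, \ldots, n\}$ into disjoint blocks of positions-and-values, any ascending (resp.\ descending) subsequence of $\sigma$ meets each such $C$ in an ascending (resp.\ descending) subsequence of $\sigma|_C$. I would therefore bound an optimal subsequence block by block and sum.

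For the column bound I would use that a permutation of a size-$\ell$ set has a full-length ascending subsequence if and only if it is the identity; a cycle of length $\ell \geq 2$ restricts to a non-identity permutation, so it contributes at most $\ell - 1$, while each fixed point contributes at most $1$. Summing gives $\sum_{\alpha_i \geq 2}(\alpha_i - 1) + \#\{i : \alpha_i = 1\} = n - r + \#\{i : \alpha_i = 1\}$. The row bound is analogous but sharper: a full-length descending subsequence forces $\sigma|_C$ to be the order-reversing permutation $w_0$, which is an involution consisting of $\lfloor \ell/2\rfloor$ transpositions (plus a fixed point when $\ell$ is odd), hence a single cycle only when $\ell \leq 2$. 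So a cycle of length $\ell \geq 3$ contributes at most $\ell - 1$ and a $2$-cycle at most $2$ to any descending subsequence. To control the fixed points, I would note that distinct fixed points $\sigma(i) = i$ and $\sigma(j) = j$ with $i < j$ have increasing values, so they can never lie together in one descending subsequence; all fixed points together therefore contribute at most $1$, i.e.\ at most $\delta_{1,\alpha_r}$. Combining blocks yields $\sum_{\alpha_i \geq 3}(\alpha_i - 1) + 2\,\#\{i : \alpha_i = 2\} + \delta_{1,\alpha_r} = n - r + \#\{i : \alpha_i = 2\} + \delta_{1,\alpha_r}$, as required.

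The main obstacle, and the only step beyond routine bookkeeping, is precisely this last point. The per-cycle bounds would overcount fixed points in the descending case, producing the weaker bound $n - r + \#\{i : \alpha_i = 2\} + \#\{i : \alpha_i = 1\}$; the sharpening to the Kronecker delta $\delta_{1,\alpha_r}$ relies on the \emph{global} observation that any two fixed points are mutually ascending, which is not visible from the cycle-local estimates. The remaining ingredients are the elementary facts that full-length ascending (resp.\ descending) subsequences characterize the identity (resp.\ $w_0$), together with the arithmetic of summing the cycle lengths.
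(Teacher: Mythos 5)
Your proof is correct, and its skeleton coincides with the paper's: both reduce the two inequalities to the $k=1$ case of Greene's theorem~\eqref{eq: desc asc description of cols and rows of RSK shape}, decompose the one-line notation along the $r$ disjoint cycles, bound the monotone-subsequence contribution of each cycle separately, and then apply the same global sharpening in the descending case --- that distinct fixed points are mutually ascending and hence contribute at most $\delta_{1,\alpha_r}$ in total to any descending subsequence. You correctly identify this as the one step invisible to the cycle-local estimates; the paper makes exactly the same observation in exactly the same place. The one genuine difference is the key lemma behind the per-cycle bounds: the paper obtains $d_i \leq 2$ for $\alpha_i = 2$ and $d_i \leq \alpha_i - 1$ for $\alpha_i \geq 3$ (and the analogous $a_i$ bounds) by citing part~(1) of Theorem~\ref{thm: original paper main result}, i.e.\ the classification $\mathcal{S}_{(m)} = \{\lambda \vdash m\} \setminus \{(1^m), (m)\}$ from~\cite{RS-complete-Rubinstein}, whereas you derive them from first principles: a full-length ascending (resp.\ descending) subsequence forces the restricted permutation to be the identity (resp.\ the reversal $w_0$), and the identity is never a cycle of length $\geq 2$, while $w_0$, being a product of $\lfloor \ell/2 \rfloor$ disjoint transpositions, is a single cycle only for $\ell \leq 2$. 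Your route buys self-containedness --- Theorem~\ref{thm:box} no longer depends on the earlier paper's classification --- at the cost of a few extra lines; the paper's citation is the shorter path given that Theorem~\ref{thm: original paper main result} is stated anyway as the result being generalized. One hygiene point you handle that the paper leaves implicit: the restriction step is legitimate because each cycle's support is simultaneously a set of positions and a set of values, so a monotone subsequence of $\sigma$ meets each cycle in a monotone subsequence of the restricted permutation, which justifies summing the per-cycle bounds.
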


\begin{proof}
   Let $\lambda = (\lambda_1, \ldots, \lambda_\ell) \in \mathcal{S}_\alpha$.
    By definition~\eqref{S_alpha}, there exists some $\sigma \in \mathcal{C}_\alpha$ such that $\lambda = \sh(\sigma)$.
    By Schensted's theorem (i.e., the $k=1$ case of Greene's theorem~\eqref{eq: desc asc description of cols and rows of RSK shape}), the length $\ell = \lambda'_1$ equals the length of the longest descending subsequence in $[\sigma_1, \ldots, \sigma_n]$.
    Meanwhile, $\sigma$ consists of $r$ disjoint cycles of lengths $\alpha_1, \ldots, \alpha_r$.
    Let $d_i$ denote the length of the longest descending subsequence in the cycle of length $\alpha_i$ (where the elements in the cycle are written so as to preserve the order in the one-line notation of $\sigma$).
    Then clearly the length $\ell$ of the longest descending subsequence in $[\sigma_1, \ldots, \sigma_n]$ is at most the sum $d_1 + \cdots + d_r$.
    
    Now, again by Schensted's theorem, each $d_i$ is the number of rows in the RS shape of some element of $\mathcal{C}_{(\alpha_i)} \subset S_{\alpha_i}$.
    Thus by inspecting the maximum number of rows in the shapes in $\mathcal{S}_{(\alpha_i)}$ given in part (1) of Theorem~\ref{thm: original paper main result}, we have
    \[
    d_i \leq \begin{cases}
        1, & \alpha_i = 1,\\
        2, & \alpha_i = 2,\\
        \alpha_i-1, & \alpha_i \geq 3.
    \end{cases}
    \]
    Hence we have
    \begin{equation}
        \label{d inequality}
     \ell \leq d_1 + \cdots + d_r \leq \underbrace{\sum_{i=1}^r (\alpha_i - 1)}_{n-r} + \, \#\{i: \alpha_i = 2\} + \#\{i : \alpha_i = 1\}.
    \end{equation}
    But in fact, since 1-cycles are fixed points $\sigma_j = j$, repeated 1's among the $\alpha_i$'s form a single increasing subsequence in $[\sigma_1, \ldots, \sigma_n]$; hence, repeated 1's cannot possibly contribute more than a total of 1 to the length of a descending subsequence.
    Therefore the bound~\eqref{d inequality} on $\ell$ can be sharpened, by replacing $\#\{i: \alpha_i = 1\}$ by either 1 (if $\alpha$ contains a 1) or 0 (otherwise).
    This yields the bound on $\ell$ given in the definition of $\mathcal{B}_\alpha$ in~\eqref{B again}.

    The argument bounding $\lambda_1$ is similar, upon replacing ``descending'' by ``ascending'' in the proof above.
    In this case, letting $a_i$ denote the length of the longest ascending subsequence in the cycle of length $\alpha_i$ in $\sigma$, we immediately have that $\lambda_1 \leq a_1 + \cdots + a_r$.
    By inspecting the maximum number of columns in the shapes in $\mathcal{S}_{(\alpha_i)}$ given in part (1) of Theorem~\ref{thm: original paper main result}, we have
    \[
    a_i \leq \begin{cases}
        1, & \alpha_i = 1,\\
        \alpha_i-1, & \alpha_i \geq 2.
    \end{cases}
    \]
    This time, repeated 1's can indeed contribute 1 individually (not just 1 in total) to an ascending subsequence in $[\sigma_1, \ldots, \sigma_n]$, and so we are left with the inequality
    \[
     \lambda_1 \leq a_1 + \cdots + a_r \leq \underbrace{\sum_{i=1}^r (\alpha_i - 1)}_{n-r} + \#\{i: \alpha_i = 1\}.
    \]
    This yields the bound on $\lambda_1$ given in~\eqref{B again}, and so completes the proof.
\end{proof}

\textit{A priori} it is not clear whether the containment $\mathcal{S}_\alpha \subseteq \mathcal{B}_\alpha$ is sharp (i.e., whether there exist any $\alpha$'s such that $\mathcal{S}_\alpha = \mathcal{B}_\alpha$).
It turns out that the answer is affirmative (as summarized in the introduction).
The remainder of the paper is devoted to proving our main result (see Theorem~\ref{thm:main result} above), which classifies the cycle types $\alpha = (\alpha_1, \alpha_2)$ such that $\mathcal{S}_{\alpha} = \mathcal{B}_{\alpha}$, and determines the complement $\mathcal{B}_\alpha \setminus \mathcal{S}_\alpha$ for the remaining $\alpha$'s.

\section{Admissible tableaux and canonical cycles}
\label{sec: fund cyc const}

In this section (see Construction~\ref{constr:canonical cycle}), for each shape $\lambda \in \mathcal{B}_{(n)}$, we construct a canonical cycle $\sigma \in \mathcal{C}_{(n)}$ such that $\sh(\sigma) = \lambda$.
As it will be seen when we define $\alpha$-colorings in Section~\ref{sec:alpha-colorings}, this construction of canonical cycles can be viewed as an $\alpha$-coloring in the special case where $r=1$.
En route to constructing $\alpha$-colorings, we also introduce an important class of standard tableaux which we call \emph{admissible} (see Definition~\ref{def:admissible}).

The group $S_n$ acts naturally on a tableau $T$ of size $n$ by permuting entries: in particular, $\sigma \cdot T$ is the tableau obtained from $T$ by replacing each entry $i$ with $\sigma_i$.
(In this setting, neither $T$ nor $\sigma \cdot T$ needs to be standard.)
We define the \emph{column word of} a tableau to be the string of entries obtained by reading the columns from top to bottom, and concatenating from left to right.
By contrast, the \emph{reverse column word} is obtained by reading the entries in each column from bottom to top instead.
We write column words in square brackets to align with our one-line notation for permutations.
A distinguished role will be played by the tableau
\begin{equation}
\label{Tdown}
    \Tdown \coloneqq \text{the tableau in $\SYT(\lambda)$ with column word $[1,\ldots, n]$}.
\end{equation}
Given a standard Young tableau $Q$, we define
\[
Q^\uparrow \coloneqq \text{the tableau obtained from $Q$ by reversing the entries in each column.}
\]
Equivalently, the column word of $Q^\uparrow$ is the reverse column word (in the above sense) of $Q$.
Note that $Q^\uparrow$ is generally not a standard tableau: indeed, the entries \emph{decrease} from top to bottom in each column, and may or may not still increase from left to right in each row.

\begin{definition}[Admissible tableaux]
    \label{def:admissible}
    A standard tableau $Q$ is said to be \emph{admissible} if the entries in $Q^\uparrow$ increase from left to right along each row.
\end{definition}

An equivalent formulation of Definition~\ref{def:admissible} can be given as follows.
By applying ``gravity'' to the boxes in $Q$, one obtains a bottom-justified tableau with the same column lengths as $Q$.
We say that $Q$ is admissible if this bottom-justified tableau is standard in the usual sense (i.e., entries increase down columns and along rows).
It seems that the proportion of admissible tableaux becomes vanishingly small as $n \rightarrow \infty$; already when $n=12$, for instance, roughly $2\%$ of standard tableaux are admissible.
(See Problem~\ref{prob:admissible} below.)
The following proposition explains the significance of admissible tableaux.


\begin{proposition}
    \label{prop:MM}
    Let $\lambda \vdash n$.
    A tableau $Q \in \SYT(\lambda)$ is admissible if and only if for all $P \in \SYT(\lambda)$, we have
    \[
    P = \sigma \cdot Q^\uparrow,
    \]
    where $\sigma \coloneqq \operatorname{RS}^{-1}(P,Q)$.
\end{proposition}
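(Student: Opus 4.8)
The plan is to work with the inverse map $\operatorname{RS}^{-1}$ directly, via reverse row-insertion. Recall that to compute $\sigma = \operatorname{RS}^{-1}(P,Q)$ one removes the boxes of $Q$ in the order of the labels $n, n-1, \ldots, 1$: at step $t$ the box carrying label $t$ sits at an outer corner $(i,j)$ of the current shape, and a reverse bump is initiated from the entry of $P$ there, traveling upward through the rows and ejecting a value $\sigma_t$ at the top. First I would reformulate the desired identity $P = \sigma \cdot Q^\uparrow$ cellwise: it asserts precisely that $\sigma_t$ equals the entry of $P$ in the cell that carries label $t$ in $Q^\uparrow$. Since label $t$ lies at the bottom of some column $j$ of $Q$ exactly when it is the current maximum there, and since $Q^\uparrow$ reverses each column, a short bookkeeping shows that the target identity is equivalent to the clean statement that \emph{every reverse-bumping path in the computation of $\operatorname{RS}^{-1}(P,Q)$ stays inside a single column}. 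Indeed, a vertical reverse bump down column $j$ simply ejects the current top entry of that column and shifts the rest up, so the $m$-th time column $j$ is emptied from the bottom it ejects the original entry $P(m,j)$, which is exactly $P$ evaluated at the cell of $Q^\uparrow$ holding the corresponding label.

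For the forward direction (admissible $\Rightarrow$ identity) I would prove, by induction on the number of removed boxes, the invariant that after all labels exceeding $t$ have been removed, the current tableau is obtained from $P$ by deleting from each column $j$ its top $s_j(t)$ entries and shifting the remainder up, where $s_j(t)$ counts the entries of $Q$ in column $j$ with label $> t$. The inductive step removes label $t$ from the bottom of some column $j^\ast$; the reverse bump is vertical provided that at each row it is not deflected rightward, i.e.\ provided the entry up-and-to-the-right dominates the value being carried. Translating through the invariant, this deflection is avoided for \emph{every} standard $P$ precisely when $s_{j^\ast+1}(t) \ge s_{j^\ast}(t)+1$ whenever column $j^\ast+1$ is still nonempty.

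The heart of the argument, and the step I expect to be the main obstacle, is to show that this family of inequalities on the $s_j(t)$, ranging over all removal steps, is exactly equivalent to the admissibility of $Q$ (equivalently, to $Q^\uparrow$ having increasing rows, i.e.\ to the bottom-justified form of $Q$ being standard). The content here is purely combinatorial: admissibility compares the $b$-th-from-bottom entries of adjacent columns of $Q$, while the inequality above compares how many large labels sit in adjacent columns, and one must check that these conditions coincide at every outer corner encountered during the removal. It is tempting to hope that the outer-corner condition alone forces verticality, but this fails for columns of very different heights (as one sees already for non-admissible $Q$ such as the one with column word giving a row-descent in $Q^\uparrow$), which is exactly why admissibility is needed. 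Once this equivalence is in hand, the forward implication follows immediately from the reduction above.

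For the converse (identity $\Rightarrow$ admissible) I would argue the contrapositive. If $Q$ is not admissible, take the \emph{first} removal step at which the inequality $s_{j^\ast+1} \ge s_{j^\ast}+1$ fails; up to that point all paths are forced to be vertical, so the invariant still holds. Because the relevant row indices in $P$ are then inverted, I can choose a standard tableau $P$ whose entries make the reverse bump deflect rightward at this step, ejecting a value different from the entry of $P$ at the corresponding cell of $Q^\uparrow$. This yields a single $P$ with $\sigma \cdot Q^\uparrow \ne P$, completing the contrapositive; one can check that the involution case $P = Q$ already furnishes such a witness, giving a uniform choice.
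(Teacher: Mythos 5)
Your reformulation via reverse row-insertion is sound, and the route is genuinely different from the paper's (which runs the \emph{forward} RS algorithm on the permutation $\rho$ defined by $P = \rho \cdot Q^\uparrow$ and proves, by induction on the insertion steps, that each inserted entry lands in its column of $P$ and bumps entries straight down). But your write-up has a genuine gap at exactly the point you yourself flag as ``the heart of the argument'': the equivalence between admissibility of $Q$ and the family of inequalities $s_{j^\ast+1}(t) \ge s_{j^\ast}(t)+1$ (or column $j^\ast+1$ being already exhausted), ranging over all removal steps, is asserted, called the main obstacle, and then never proved; the forward implication is made to ``follow immediately'' from a claim you have not established. The same applies to the converse: you assert that a deflecting standard $P$ can be chosen at the first failing step, and that $P = Q$ is a uniform witness, but neither construction is carried out. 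As it stands, the proposal is a plan with its central verification missing, not a proof.

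The missing step is, in fact, short once the indices are unwound, so the route can be completed. If $t$ sits at row $m$ of column $j^\ast$ in $Q^\uparrow$, then $m = s_{j^\ast}(t)+1$, and since the entries in any column of $Q^\uparrow$ decrease downward, the condition $s_{j^\ast+1}(t) \ge m$ is literally the statement $Q^\uparrow(m, j^\ast+1) > t = Q^\uparrow(m, j^\ast)$, i.e.\ the row-increase condition defining admissibility at that cell. The only edge case is when column $j^\ast+1$ of $Q$ is shorter than $m$: there, admissibility at the row equal to the length of column $j^\ast+1$, combined with the downward decrease of columns in $Q^\uparrow$, forces every label of column $j^\ast+1$ to exceed $t$, so that column is already empty when $t$ is removed and no deflection can occur. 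With this lemma in place, and with an explicit deflecting $P$ in the converse (analogous to the paper's choice of $P$ with $\rho_i > \rho_j$ for a suitable entry $i$ strictly southwest of $j$), your argument closes; without these verifications, the proposition has not been proved.
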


\begin{proof}
Let $P,Q \in \SYT(\lambda)$, and suppose that $Q$ is admissible.
We must show that $P = \sigma \cdot Q^\uparrow$.
To do this, let $\rho \in S_n$ be the unique permutation such that $P = \rho \cdot Q^\uparrow$ (i.e., $\rho_i$ is the entry in $P$ with the same position as the entry $i$ in $Q^\uparrow$).
It suffices to show that $\rho = \sigma$, or in other words (since the RS correspondence is a bijection), that $\operatorname{RS}(\rho) = (P,Q)$.

We prove this by induction on the index $i = 1, \ldots, n$, where as usual $P_i(\rho)$ and $Q_i(\rho)$ denote the tableaux obtained after the $i$th step of the RS algorithm applied to $\rho$.
As our induction hypothesis, we assume that
\begin{enumerate}
\item Every entry in $P_i(\rho)$ lies in the column corresponding to its column in $P$.
\item The tableau $Q_i(\rho)$ is the subtableau of $Q$ consisting of entries $1, \ldots, i$.
\end{enumerate}
In the base case $i=1$, the entry $1$ occupies the lower-left corner of $Q^\uparrow$ and therefore the entry $\rho_1$ is also in the lower-left corner (thus in column 1) of $P$.
Therefore since $P_1(\rho)$ is the single box with entry $\rho_1$, and $Q_1(\rho)$ is the single box with entry $1$, we have verified both hypotheses (1) and (2) above in the base case.

Now to show that hypotheses (1) and (2) remain true upon replacing $i$ by $i+1$, we consider the effect of inserting $\rho_{i+1}$ into the tableau $P_{i}(\rho)$ to obtain $P_{i+1}(\rho)$.
Because $Q$ is both standard and admissible, the entries in $Q^\uparrow$ increase upwards along columns and left-to-right along rows; therefore every entry lying weakly southwest of $i+1$ in $Q^\uparrow$ is less than $i+1$.
It follows that every entry lying weakly southwest of $\rho_{i+1}$ in $P$ has already been inserted (i.e., is one of the previous $\rho_1, \ldots, \rho_{i}$).
By the same reasoning, every entry lying weakly northeast of $\rho_{i+1}$ in $P$ has \emph{not} yet been inserted.
Thus if $k$ denotes the index of the column in $P$ containing the entry $\rho_{i+1}$, then the entries in column $k$ that have already been inserted are precisely those lying below $\rho_{i+1}$; moreover, by the induction hypothesis (1), these entries are precisely the entries in the column $k$ of the tableau $P_{i}(\rho)$.
In particular, the topmost entry of column $k$ in $P_i(\rho)$ is the entry directly below (and therefore greater than) $\rho_{i+1}$ in $P$.
Therefore $\rho_{i+1}$ must be inserted either into column $k$ of $P_{i}(\rho)$, or into some column to the left.

By hypothesis (1) and the admissibility of $Q$, column $k-1$ of $P_i(\rho)$ consists of all the entries in column $k-1$ of $P$ which lie weakly southwest of $\rho_{i+1}$ (along with possibly some entries directly above them in column $k-1$ of $P$).
Thus the topmost entry in column $k-1$ of $P_i(\rho)$ is some entry in column $k-1$ of $P$ which is weakly above (and therefore less than) $\rho_{i+1}$.
It follows that $\rho_{i+1}$ must in fact be inserted into column $k$ of $P_i(\rho)$.
It also follows that column $k-1$ is strictly longer than column $k$ in $P_i(\rho)$, and so by the same argument above, this insertion of $\rho_{i+1}$ bumps each entry in column $k$ of $P_i(\rho)$ directly downward (since it is greater than the entry below it in column $k-1$).
Thus hypothesis (1) is true of $P_{i+1}(\rho)$.
Furthermore, since the new box in $P_{i+1}(\rho)$ is added to column $k$, it follows that $Q_{i+1}(\rho)$ is obtained from $Q_i(\rho)$ by adding a box with entry $i+1$ to the bottom of column $k$.
By hypothesis (2) we have that $Q_{i}(\rho)$ is already the subtableau of $Q$ consisting of entries $1, \ldots, i$.
Therefore since column $k$ is the column of $Q$ containing the entry $i+1$, we conclude that hypothesis (2) is true of $Q_{i+1}(\rho)$.
Having completed the induction argument, we now have that $P_n(\rho) = P$, since a standard tableau is determined by its columns, and $Q_n(\rho) = Q$.
Therefore we have $\operatorname{RS}(\rho) = (P,Q)$, as desired, and thus $\rho = \sigma$.

To prove the converse, suppose that $Q \in \SYT(\lambda)$ is \emph{not} admissible.
We will produce a tableau $P$ such that $P \neq \sigma \cdot Q^\uparrow$.
Set $j$ equal to the smallest entry in $Q^\uparrow$ which is less than the entry directly to its left.
(By definition of admissibility, such an entry must exist.)
Let $k$ denote the index of the column containing this entry $j$.
Since $Q$ is standard, column $k-1$ of $Q^\uparrow$ must contain at least one entry which is less than $j$; let $i$ be the smallest such entry.
Then we have $i < j$ and $i$ lies strictly southwest of $j$.
Now choose $P \in \SYT(\lambda)$ so that $\rho_{i} > \rho_j$ (where as before, $\rho$ is the unique permutation such that $P = \rho \cdot Q^\uparrow$); this is always possible since $\rho_i$ lies strictly southwest of $\rho_j$ in $P$.
By the minimality of $j$, the first part of the proof guarantees that $P_{j-1}(\rho)$ preserves the column indices (from $P$) of its entries; therefore $\rho_i$ is the topmost entry in column $k-1$ of $P_{j-1}(\rho)$.
Thus since $\rho_i > \rho_j$, one obtains $P_j(\rho)$ by first inserting $\rho_j$ into some column strictly to the left of column $k$.
It follows (from the RS algorithm) that entry $j$ in $Q_j(\rho)$ also occurs strictly to the left of column $k$, and thus it is impossible that $Q_n(\rho) = Q$.
Thus we have $\rho \neq \sigma \coloneqq \operatorname{RS}^{-1}(P,Q)$.
Since $P = \rho \cdot Q^\uparrow$ and $\rho \neq \sigma$, we conclude that $P \neq \sigma \cdot Q^\uparrow$, which completes the proof of the converse.
\end{proof}

\begin{corollary}
    \label{corollary:MM column-canonical}
    Let $\lambda \vdash n$, and let $Q \in \SYT(\lambda)$ be admissible.
    Let $\sigma \in S_n$, and let $P = \sigma \cdot Q^\uparrow$.
    If $P \in \SYT(\lambda)$, then $\operatorname{RS}(\sigma) = (P,Q)$, and in particular, we have $\sh(\sigma) = \lambda$.
\end{corollary}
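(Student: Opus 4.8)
The plan is to deduce the corollary almost immediately from Proposition~\ref{prop:MM}, using only the forward implication of that proposition together with the fact that the $S_n$-action on a tableau filled with the distinct entries $1,\ldots,n$ is free. There is no induction or algorithm-tracing to redo here; the work was already done in the proposition, and the corollary is a matter of re-packaging it.

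First I would apply Proposition~\ref{prop:MM} in the ``admissible $\Rightarrow$'' direction. Set $\tau \coloneqq \operatorname{RS}^{-1}(P,Q)$. Since $Q \in \SYT(\lambda)$ is admissible and $P \in \SYT(\lambda)$, the proposition yields $P = \tau \cdot Q^\uparrow$. Next I would combine this identity with the hypothesis $P = \sigma \cdot Q^\uparrow$ to obtain $\sigma \cdot Q^\uparrow = \tau \cdot Q^\uparrow$. The one point requiring care is that this equation forces $\sigma = \tau$: because $Q$ is a standard tableau of size $n$, its column-reversal $Q^\uparrow$ contains each of $1,\ldots,n$ exactly once, so reading off, for each $i$, the entry of $\sigma \cdot Q^\uparrow$ occupying the cell where $Q^\uparrow$ holds $i$ recovers $\sigma_i$ uniquely. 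Thus the map $\pi \mapsto \pi \cdot Q^\uparrow$ is injective on $S_n$, and $\sigma \cdot Q^\uparrow = \tau \cdot Q^\uparrow$ gives $\sigma = \tau$.

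Finally, $\sigma = \tau = \operatorname{RS}^{-1}(P,Q)$ is by definition the statement $\operatorname{RS}(\sigma) = (P,Q)$, and since $P$ and $Q$ are both of shape $\lambda$ we conclude $\sh(\sigma) = \lambda$. I do not expect a genuine obstacle: the only subtlety is invoking Proposition~\ref{prop:MM} in the correct direction and then using the freeness of the action, which is exactly why it matters that $Q$ (hence $Q^\uparrow$) is an honest standard tableau on the full alphabet $\{1,\ldots,n\}$ rather than an arbitrary filling. In short, Proposition~\ref{prop:MM} produces \emph{one} permutation sending $Q^\uparrow$ to $P$, and freeness of the action guarantees it is the \emph{only} one, so the given $\sigma$ must coincide with $\operatorname{RS}^{-1}(P,Q)$.
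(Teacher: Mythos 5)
Your proof is correct and is essentially the paper's intended argument: the paper states this result as an immediate consequence of Proposition~\ref{prop:MM} (with no written proof), and the natural derivation is exactly what you give --- apply the forward direction of the proposition to the standard tableau $P = \sigma \cdot Q^\uparrow$, then use the fact that $Q^\uparrow$ has distinct entries $1,\ldots,n$ (so the map $\pi \mapsto \pi \cdot Q^\uparrow$ is injective) to conclude $\sigma = \operatorname{RS}^{-1}(P,Q)$. Your explicit attention to the freeness of the action is a point the paper leaves tacit, but it is the right justification.
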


The tableau $\Tdown$ defined in~\eqref{Tdown} is, in a sense, the canonical example of an admissible tableau; note that admissibility is automatic because each entry in $\Tdown$ is less than every entry in the column to its right.
The following construction is a sort of ``proto-$\alpha$-coloring'' of $\Tup$ (in the sense of the upcoming Definition~\ref{def: alpha coloring}).
It gives a canonical way to cyclically permute the entries in $\Tup$ in order to obtain a standard tableau (see Figure~\ref{fig: canonical cycle example} for an example).

\begin{construction}
\label{constr:canonical cycle}
    Let $\lambda \in \mathcal{B}_{(n)}$.
    The \emph{canonical cycle of shape~$\lambda$} is the permutation $\sigma \in S_n$ constructed as follows:
    \begin{enumerate}
        \item Begin with the tableau $\Tup$, where $\Tdown$ is the canonical tableau defined in~\eqref{Tdown}.
        
        \item Draw a slash through the bottom box in each column, except for the rightmost column and any columns of length $1$.
        
        In each column (starting from the left), draw an arrow from the top box to the bottommost (non-slashed) box, then alternately to the top/bottom remaining boxes until the column is exhausted.
        When there are no more boxes in the column, draw the next arrow up to the top box in the next column to the right, and repeat the above procedure in that column.
        
        When the boxes in the rightmost column are exhausted, complete the cycle by drawing a sequence of arrows to/from the slashed boxes, from right to left, and then a final arrow up to the top box in the first column.
        
        \item To obtain $\sigma$, view each arrow as giving the preimage $\sigma_i \mapsto i$.
        Hence to write $\sigma$ in cycle notation, simply follow the arrows \emph{backwards} (starting anywhere) and write down the corresponding entries.
        \end{enumerate}
\end{construction} 


\begin{figure}
    \centering
    \begin{subfigure}[b]{0.3\textwidth}
         \centering
         \input{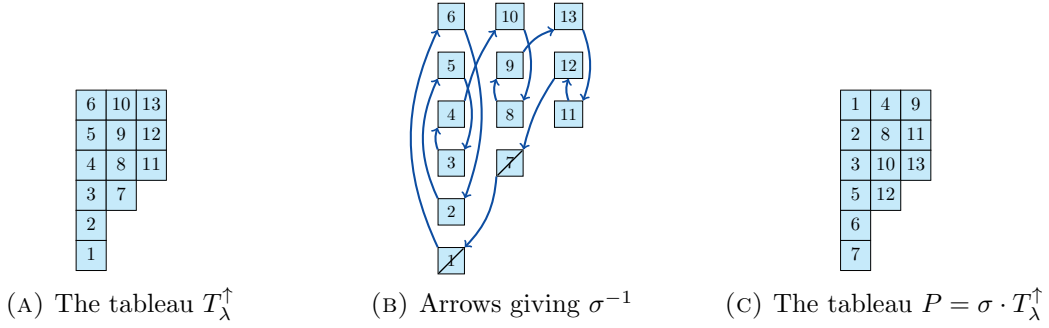}
         \caption{The tableau $\Tup$}
         \label{subfig:Qupex}
     \end{subfigure}
    \begin{subfigure}[b]{0.3\textwidth}
         \centering
         \scalebox{.65}{
\begin{tikzpicture}[scale=1, node distance=0.5cm, every node/.style={draw, minimum size=0.6cm, anchor=center, fill=\Blue, scale=.9}]

\node (6) at (0,0) {6};
\node (10) at (1.2,0) {10};
\node (13) at (2.4,0) {13};

\node (5) at (0,-1) {5};
\node (9) at (1.2,-1) {9};
\node (12) at (2.4,-1) {12};

\node (4) at (0,-2) {4};
\node (8) at (1.2,-2) {8};
\node (11) at (2.4,-2) {11};

\node (3) at (0,-3) {3};
\node (7) at (1.2,-3) {7};

\node (2) at (0,-4) {2};

\node (1) at (0,-5) {1};

\draw [thick] (-0.25,-5.25) -- ++(.5,.5);

\draw [thick] (0.95,-3.25) -- ++(.5,.5);

\draw[->, very thick, cyan!20!blue, bend left=20] (6.south east) to (2.north east);
\draw[->, very thick, cyan!20!blue, bend left=25] (2.north west) to (5.south west);
\draw[->, very thick, cyan!20!blue, bend left=20] (5.south east) to (3.north east);
\draw[->, very thick, cyan!20!blue, bend left=20] (3.north west) to (4.south west);
\draw[->, very thick, cyan!20!blue, bend left=10] (4.north east) to (10.south west);
\draw[->, very thick, cyan!20!blue, bend left=20] (10.south east) to (8.north east);
\draw[->, very thick, cyan!20!blue, bend left=20] (8.north west) to (9.south west);
\draw[->, very thick, cyan!20!blue, bend left=20] (9.north east) to (13.south west);
\draw[->, very thick, cyan!20!blue, bend left=20] (13.south east) to (11.north east);
\draw[->, very thick, cyan!20!blue, bend left=20] (11.north) to (12.south);
\draw[->, very thick, cyan!20!blue, bend right=10] (12.south west) to (7.north east);
\draw[->, very thick, cyan!20!blue, bend left=20] (7.south west) to (1.north east);
\draw[->, very thick, cyan!20!blue, bend left=25] (1.north west) to (6.south west); 

\end{tikzpicture}
}
         \caption{Arrows giving $\sigma^{-1}$}
         \label{subfig:arrows}
     \end{subfigure}
     \begin{subfigure}[b]{0.3\textwidth}
         \centering
         \input{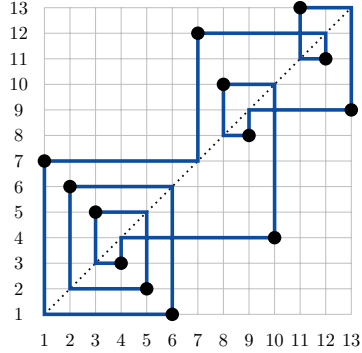}
         \caption{The tableau $P = \sigma \cdot \Tup$}
         \label{subfig:Pex}
     \end{subfigure}
    
    \vspace{1cm}
     
     \begin{subfigure}[b]{0.75\textwidth}
         \centering
         \scalebox{.85}{\begin{tikzpicture}[scale=.4, every node/.style={scale=.75}]
        \draw[lightgray] (1,1) grid (13,13);
        \draw[dotted,thick] (1,1) -- (13,13);


        \draw [ultra thick, cyan!20!blue] (1,7) node [dot,fill=black] {} -- (7,7) -- (7,12) node [dot,fill=black] {} -- (12,12) -- (12,11) node [dot,fill=black] {} -- (11,11) -- (11,13) node [dot,fill=black] {} -- (13,13) -- (13,9) node [dot,fill=black] {} -- (9,9) -- (9,8) node [dot,fill=black] {} -- (8,8) -- (8,10) node [dot,fill=black] {} -- (10,10) -- (10,4) node [dot,fill=black] {} -- (4,4) -- (4,3) node [dot,fill=black] {} -- (3,3) -- (3,5) node [dot,fill=black] {} -- (5,5) -- (5,2) node [dot,fill=black] {} -- (2,2) -- (2,6) node [dot,fill=black] {} -- (6,6) -- (6,1) node [dot,fill=black] {} -- (1,1) -- (1,7);

        \node at (1,0) {$1$};
        \node at (2,0) {$2$};
        \node at (3,0) {$3$};
        \node at (4,0) {$4$};
        \node at (5,0) {$5$};
        \node at (6,0) {$6$};
        \node at (7,0) {$7$};
        \node at (8,0) {$8$};
        \node at (9,0) {$9$};
        \node at (10,0) {$10$};
        \node at (11,0) {$11$};
        \node at (12,0) {$12$};
        \node at (13,0) {$13$};

        \node at (0,1) {$1$};
        \node at (0,2) {$2$};
        \node at (0,3) {$3$};
        \node at (0,4) {$4$};
        \node at (0,5) {$5$};
        \node at (0,6) {$6$};
        \node at (0,7) {$7$};
        \node at (0,8) {$8$};
        \node at (0,9) {$9$};
        \node at (0,10) {$10$};
        \node at (0,11) {$11$};
        \node at (0,12) {$12$};
        \node at (0,13) {$13$};

\end{tikzpicture}
}
         \caption{The graph of $\sigma$, with its cycle structure shown by the spiraling path.
         Note that this path (upon reflection about the main diagonal) gives the same pattern as the arrows in (\textsc{b}).}
         \label{subfig:graph}
     \end{subfigure}
    \caption{Illustration of Construction~\ref{constr:canonical cycle}: the canonical cycle $\sigma$ of shape $\lambda = (3,3,3,2,1,1)$.
    The arrows give the cycle notation of $\sigma^{-1} =  (1,6,2,5,3,4,10,8,9,13,11,12,7)$.
    The one-line notation of $\sigma$ is the reverse column word of $P = \sigma \cdot \Tup$ (i.e., the tableau obtained from $\Tup$ by permuting entries along the arrows): thus we have
    $\sigma = [7,6,5,3,2,1,12,10,8,4,13,11,9]$.
    Via the RS correspondence, we have $\operatorname{RS}(\sigma) = (P, \Tdown)$.}
    \label{fig: canonical cycle example}
\end{figure}

The arrows in Construction~\ref{constr:canonical cycle} can be viewed as a condensed depiction of the graph of the canonical cycle $\sigma$ (see Figure~\ref{subfig:graph}).
Indeed, each column of $\Tup$ represents a cross section of a ``spiral'' pattern in this graph: since the arrows actually depict $\sigma^{-1}$, one can see the precise structure of the arrows by reflecting the graph of $\sigma$ about the main diagonal.
The term ``canonical cycle of shape $\lambda$'' is justified by the following lemma.

\begin{lemma}
    \label{lemma:canonical cycle}
    Let $\lambda \in \mathcal{B}_{(n)}$, and let $\sigma$ be the canonical cycle of shape $\lambda$ obtained via Construction~\ref{constr:canonical cycle}.
    Then we have ${\sigma \in \mathcal{C}_{(n)}}$, and  $\sh(\sigma) = \lambda$.
    Moreover, we have $\operatorname{RS}(\sigma) = (P,\Tdown)$, where $P = \sigma \cdot \Tup$ is the tableau whose reverse column word is the one-line notation of $\sigma$.
\end{lemma}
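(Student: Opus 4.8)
The plan is to reduce the whole statement to the single assertion that $P = \sigma \cdot \Tup$ is a standard tableau, and then to read off everything else from Corollary~\ref{corollary:MM column-canonical} together with the combinatorics of Construction~\ref{constr:canonical cycle}. First I would record two cost-free bookkeeping facts. Since in $\Tdown$ every entry is smaller than every entry of the column to its right, $\Tdown$ is admissible and, in the notation of Proposition~\ref{prop:MM}, its associated $Q^\uparrow$ is exactly $\Tup$. Moreover, reading the columns of $\Tup$ from bottom to top and left to right recovers the column word $[1,\ldots,n]$ of $\Tdown$; hence the reverse column word of $P = \sigma \cdot \Tup$ is obtained by applying $\sigma$ entrywise to $[1,\ldots,n]$, i.e.\ it is precisely the one-line notation $[\sigma_1,\ldots,\sigma_n]$. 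This disposes of the ``reverse column word'' clause immediately.

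Next I would prove $\sigma \in \mathcal{C}_{(n)}$ directly from the shape of the arrow diagram. The construction draws a single continuous closed path: the forward phase processes the columns from left to right, and within each column the alternating top/bottom rule enumerates every non-slashed box exactly once before an arrow passes to the top of the next column (columns of length $1$, and the un-slashed rightmost column, are degenerate instances of the same rule); the return phase then visits each slashed box exactly once from right to left, and the closing arrow returns to the top of column $1$ (when there are no slashed boxes, as for a single-column shape, the return phase is empty and the closing arrow returns directly). Because the non-slashed and slashed boxes partition all $n$ boxes, each visited exactly once along one closed walk, the arrows trace a single $n$-cycle, namely $\sigma^{-1}$; hence $\sigma \in \mathcal{C}_{(n)}$.

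The heart of the argument is showing $P = \sigma \cdot \Tup \in \SYT(\lambda)$, and I expect this to be the main obstacle. The key observation is that the entry of $P$ in a given box equals the $\Tup$-entry of the box immediately preceding it in the traversal (the box whose outgoing arrow points into it). Writing $N_j = \lambda'_1 + \cdots + \lambda'_j$, the entries of column $j$ of $\Tup$ are $N_j, N_j-1, \ldots, N_{j-1}+1$ from top to bottom, so column-increase of $P$ is equivalent to $\sigma$ being strictly decreasing on each block $\{N_{j-1}+1, \ldots, N_j\}$. For the ``interior'' boxes of a column the alternating rule gives a clean formula for the row of the predecessor, in two cases according to whether the box is reached on an up-step or a down-step of the zigzag; from these formulas both the strict decrease on each block and the row inequalities between adjacent columns follow by direct comparison. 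The delicate part is the remaining boxes: the top box of each column, whose predecessor lies in the previous column (or is the closing arrow when $j=1$), and the slashed boxes, whose predecessors lie on the return phase. Here one must track how the seam of the cycle and the right-to-left return interleave the small and large values, so that each column of $P$ stays increasing at its very top and bottom and each row stays increasing across the tail and the length-$1$ columns; this case analysis is where the bookkeeping concentrates.

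Finally, with $P \in \SYT(\lambda)$ in hand, I would apply Corollary~\ref{corollary:MM column-canonical} to the admissible tableau $Q = \Tdown$ (so that $Q^\uparrow = \Tup$ and $P = \sigma \cdot Q^\uparrow$). This yields at once $\operatorname{RS}(\sigma) = (P, \Tdown)$, and in particular $\sh(\sigma) = \lambda$, which together with the first two paragraphs completes the proof.
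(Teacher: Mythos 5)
Your overall frame is the same as the paper's: observe that $\Tdown$ is admissible, reduce everything to the single claim $P = \sigma \cdot \Tup \in \SYT(\lambda)$, and then invoke Corollary~\ref{corollary:MM column-canonical}. Your two bookkeeping facts are correct (the reverse column word of $P$ is indeed $[\sigma_1,\ldots,\sigma_n]$, and the arrow diagram is a single closed walk through all $n$ boxes, so $\sigma \in \mathcal{C}_{(n)}$; the paper dismisses the latter as immediate, and your traversal argument is a fine way to make it explicit). The problem is that the heart of the lemma is never actually proved. You correctly reformulate column-standardness of $P$ as ``$\sigma$ is strictly decreasing on each block $\{N_{j-1}+1,\ldots,N_j\}$'' and note that each entry of $P$ is the $\Tup$-entry of its predecessor box, but then you only assert that the interior comparisons ``follow by direct comparison'' and you explicitly defer the tops of columns, the slashed boxes, and the seam of the cycle as ``where the bookkeeping concentrates.'' Those boundary boxes are precisely where the content of the lemma lives --- they are the only places where standardness could fail --- so as written the proposal is a plan for a proof, not a proof.

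For comparison, the paper closes exactly this gap without any index formulas, by tracking which \emph{column} each entry of $P$ originated from: interior entries of a column of $P$ are a reversal of the decreasing column of $\Tup$, hence increasing; the top entry of a column came from the column to its left in $\Tup$, hence is smaller than every original entry of that column; the bottom entry came from a column to its right (or is $n$ in the last column), hence is larger; and the row conditions (top row, and the row of slashed boxes) follow from the same provenance reasoning. Note also that this is where the hypothesis $\lambda \in \mathcal{B}_{(n)}$ enters: for $n \geq 3$ it guarantees $\Tup$ has at least two columns and its first column at least two boxes, which is what makes the statements ``top entry came from the left'' and ``the leftmost top entry is $1$'' valid; your sketch never identifies where this hypothesis is used, and any completed version of your case analysis would have to. Your predecessor-row-formula route could in principle be pushed through, but it is heavier than the paper's argument and, as submitted, its decisive steps are missing.
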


\begin{proof}
    The fact that $\sigma \in \mathcal{C}_{(n)}$ is immediate from Construction~\ref{constr:canonical cycle}.
    Since $\Tdown$ is admissible, the rest of the lemma will follow from Corollary~\ref{corollary:MM column-canonical} if we can show that $P \in \SYT(\lambda)$.
        
    To show that the entries of $P$ increase down each column, we first observe that in a given column of $P$, all except perhaps the top and bottom entry were originally positioned in that same column of $\Tup$; therefore, since the arrows with both ends in this column simply reverse the order of the corresponding entries (which were decreasing downwards in $\Tup$), we conclude that the entries strictly between the top and bottom entries are now \emph{increasing} downwards in $P$.
    In any column of $P$ with more than one entry, its bottommost entry came from its right in $\Tup$, except that in the last column of $P$ the bottom entry is $n$.
    Likewise, in any column of $P$ with more than one entry,
    its topmost entry came from its left in $\Tup$, except that in the first column the top entry is $1$.
    (This last statement assumes that $\Tup$ has more than one column, which (when $n \geq 3$) requires the hypothesis $\lambda \in \mathcal{B}_{(n)}$.
    Note that when $n=1$ or $n=2$, there is exactly one shape $\lambda \in \mathcal{B}_{(n)}$, which is a single column with 1 or 2 boxes, respectively;
    these are the only one-column shapes for which the arrows defined in Construction~\ref{constr:canonical cycle} give a valid cycle.)
    Hence the top and bottom entries in $P$ are the least and greatest entries (respectively) in their columns.
    This proves that the entries of $P$ increase down columns.

    The same argument also shows that the entries in $P$ increase across each row (from left to right): in a given row, any entry except the \emph{top} or \emph{bottom} of a column originally began in that column of $\Tup$, so these entries increase from left to right.
    We will treat the top row of $P$ in the last paragraph of the proof.
    If an entry is the \emph{bottom} of a column, then the remaining entries to its right in its row are also at the bottom of a column; each of these entries (since it is slashed) came from the column to its right in  $\Tup$ (and thus is larger than the entry to its left in $P$), the only exception being the bottom entry of the rightmost column, which is either $n$ (if that column contains more than one entry) or else lies in the top row of $P$ (to be discussed next).
    
    Finally, in the top row of $P$, every entry came from the preceding column in $\Tup$, except for the leftmost entry, which is $1$ (assuming that the first column contains at least two entries, which requires the hypothesis $\lambda \in \mathcal{B}_{(n)}$); hence the top row of $T$ is increasing.
    This proves that $P \in \SYT(\lambda)$, and the result follows from Corollary~\ref{corollary:MM column-canonical}.
\end{proof}

Note that our Construction~\ref{constr:canonical cycle} immediately yields one of the main results of~\cite{RS-complete-Rubinstein}, which we reproduced above in part (1) of Theorem~\ref{thm: original paper main result}.
This result is the following corollary (restated in the language of the present paper).

\begin{corollary}
    We have $\mathcal{S}_{(n)} = \mathcal{B}_{(n)}$ for all $n \in \N$.
\end{corollary}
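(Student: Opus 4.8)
The plan is to prove the two inclusions $\mathcal{S}_{(n)} \subseteq \mathcal{B}_{(n)}$ and $\mathcal{B}_{(n)} \subseteq \mathcal{S}_{(n)}$ separately and then combine them. The first inclusion requires no new work: it is exactly Theorem~\ref{thm:box} specialized to the cycle type $\alpha = (n)$, so I would simply cite it.

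For the reverse inclusion $\mathcal{B}_{(n)} \subseteq \mathcal{S}_{(n)}$, the idea is to produce, for each admissible shape in the bounding box, an $n$-cycle realizing it as an RS shape. Concretely, I would fix an arbitrary $\lambda \in \mathcal{B}_{(n)}$ and feed it into Construction~\ref{constr:canonical cycle}, obtaining the canonical cycle $\sigma$ of shape $\lambda$. By Lemma~\ref{lemma:canonical cycle}, this $\sigma$ satisfies both $\sigma \in \mathcal{C}_{(n)}$ and $\sh(\sigma) = \lambda$. Hence $\lambda = \sh(\sigma) \in \mathcal{S}_{(n)}$, and since $\lambda$ was arbitrary in $\mathcal{B}_{(n)}$, we obtain $\mathcal{B}_{(n)} \subseteq \mathcal{S}_{(n)}$. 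Together with the first inclusion this gives the equality $\mathcal{S}_{(n)} = \mathcal{B}_{(n)}$.

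There is essentially no remaining obstacle, because all of the substantive work has been front-loaded into Construction~\ref{constr:canonical cycle} and Lemma~\ref{lemma:canonical cycle}: the hard part was verifying that the canonical cycle really is an $n$-cycle and that $P = \sigma \cdot \Tup$ is standard, which is precisely what that lemma supplies via Corollary~\ref{corollary:MM column-canonical}. The only points I would double-check are the degenerate cases $n = 1$ and $n = 2$, where $\mathcal{B}_{(n)}$ collapses to a single one-column shape; but these are already handled inside the proof of Lemma~\ref{lemma:canonical cycle}, where the arrow construction is confirmed to yield a valid cycle on one-column shapes. Thus the corollary follows immediately from the two cited results.
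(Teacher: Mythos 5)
Your proof is correct and matches the paper's intended argument exactly: the paper derives this corollary from Theorem~\ref{thm:box} for the inclusion $\mathcal{S}_{(n)} \subseteq \mathcal{B}_{(n)}$ and from Construction~\ref{constr:canonical cycle} together with Lemma~\ref{lemma:canonical cycle} for the reverse inclusion, just as you do. Your remark that the degenerate cases $n=1,2$ are already absorbed into the proof of Lemma~\ref{lemma:canonical cycle} is also consistent with the paper.
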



\section{Constructions of $\alpha$-colorings}
\label{sec:alpha-colorings}

We will now extend Construction~\ref{constr:canonical cycle} to more general cycle types $\alpha = (\alpha_1, \alpha_2) \vdash n$.
Guided by Corollary~\ref{corollary:MM column-canonical}, we can immediately see that the goal is to find \emph{two} disjoint cycles of arrows (of lengths $\alpha_1$ and $\alpha_2$) in the tableau $\Tup$, such that permuting the entries along the arrows yields a standard Young tableau $P$.
In fact, if we fix the method of drawing arrows described in Construction~\ref{constr:canonical cycle}, then the goal reduces even further: namely, decompose $\Tup$ into two disjoint subdiagrams (i.e., ``colors'') of sizes $\alpha_1$ and $\alpha_2$, such that permuting entries along the arrows (restricted to each color) yields a standard tableau $P$.
This goal leads to the following general definition of an $\alpha$-coloring of a tableau.

\begin{definition}
\label{def: alpha coloring}
    Let $\alpha = (\alpha_1, \ldots, \alpha_r) \vdash n$.
    Let $\lambda \in \mathcal{B}_{\alpha}$, and let $Q \in \SYT(\lambda)$ be admissible.
    An \emph{$\alpha$-coloring of $Q^\uparrow$} is a partitioning of the boxes of $Q^\uparrow$ into the colors $c_1, \ldots, c_r$, such that 
    \begin{enumerate}
        \item there are exactly $\alpha_i$ boxes with color $c_i$;
        \item upon cyclically permuting the entries of each color via the arrows defined in Step 2 of Construction~\ref{constr:canonical cycle}, while ignoring the boxes of other colors, one obtains a standard tableau $P \in \SYT(\lambda)$.
        (See the arrows in Figure \ref{fig: alpha-coloring example}.)
    \end{enumerate} 
    The \emph{associated permutation} of an $\alpha$-coloring is the permutation $\sigma$ such that $P = \sigma \cdot Q^\uparrow$.
\end{definition}

\begin{figure}
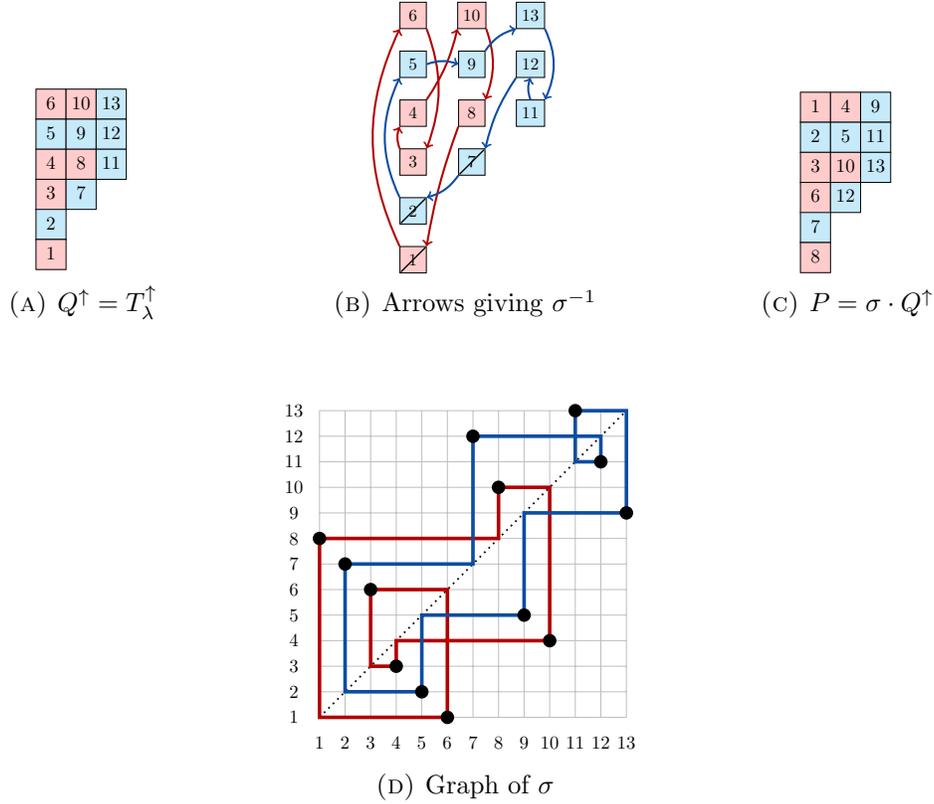

    \centering
    \begin{subfigure}[b]{0.3\textwidth}
         \centering
         \input{Qup_alpha_example}
         \caption{$Q^\uparrow = \Tup$}
         \label{subfig:alpha Qupex}
     \end{subfigure}
    \begin{subfigure}[b]{0.3\textwidth}
         \centering
         \scalebox{.65}{
\begin{tikzpicture}[scale=1, node distance=0.5cm, every node/.style={draw, minimum size=0.6cm, anchor=center, fill=\Blue, scale=.9}]

\node [fill=\Red] (6) at (0,0) {6};
\node [fill=\Red] (10) at (1.2,0) {10};
\node (13) at (2.4,0) {13};

\node (5) at (0,-1) {5};
\node (9) at (1.2,-1) {9};
\node (12) at (2.4,-1) {12};

\node [fill=\Red] (4) at (0,-2) {4};
\node [fill=\Red] (8) at (1.2,-2) {8};
\node (11) at (2.4,-2) {11};

\node [fill=\Red] (3) at (0,-3) {3};
\node (7) at (1.2,-3) {7};

\node (2) at (0,-4) {2};

\node [fill=\Red] (1) at (0,-5) {1};

\draw [thick] (-0.25,-5.25) -- ++(.5,.5);

\draw [thick] (-0.25,-4.25) -- ++(.5,.5);

\draw [thick] (0.95,-3.25) -- ++(.5,.5);

\draw[->, red!70!black, very thick, bend left=20] (6.south east) to (3.north east);
\draw[->, red!70!black, very thick, bend left=20] (3.north west) to (4.south west);
\draw[->, red!70!black, very thick, bend right=10] (4.north east) to (10.south west);
\draw[->, red!70!black, very thick, bend left=20] (10.south east) to (8.north east);
\draw[->, red!70!black, very thick, bend right=5] (8.south west) to (1.north east);
\draw[->, red!70!black, very thick, bend left=25] (1.north west) to (6.south west);

\draw[->, cyan!20!blue, very thick, bend left=25] (2.north west) to (5.south west);
\draw[->, cyan!20!blue, very thick, bend left=20] (5.east) to (9.west);
\draw[->, cyan!20!blue, very thick, bend left=20] (9.north east) to (13.south west);
\draw[->, cyan!20!blue, very thick, bend left=25] (13.south east) to (11.north east);
\draw[->, cyan!20!blue, very thick, bend left=20] (11.north) to (12.south);
\draw[->, cyan!20!blue, very thick, bend right=10] (12.south west) to (7.north east);
\draw[->, cyan!20!blue, very thick, bend left=20] (7.south west) to (2.north east);

\end{tikzpicture}
}
         \caption{Arrows giving $\sigma^{-1}$}
         \label{subfig:alpha arrows}
     \end{subfigure}
     \begin{subfigure}[b]{0.3\textwidth}
         \centering
         \input{P_alpha_ex}
         \caption{$P = \sigma \cdot Q^\uparrow$}
         \label{subfig:alpha Pex}
     \end{subfigure}
    
    \vspace{1cm}
     
     \begin{subfigure}[b]{0.75\textwidth}
         \centering
         \scalebox{.85}{
\begin{tikzpicture}[scale=.4, every node/.style={scale=.75}]
        \draw[lightgray] (1,1) grid (13,13);
        \draw[dotted,thick] (1,1) -- (13,13);

        \draw [ultra thick, red!70!black] (1,8) node [dot,fill=black] {} -- (8,8) -- (8,10) node [dot,fill=black] {} -- (10,10) -- (10,4) node [dot,fill=black] {} -- (4,4) -- (4,3) node [dot,fill=black] {} -- (3,3) -- (3,6) node [dot,fill=black] {} -- (6,6) -- (6,1) node [dot,fill=black] {} -- (1,1) -- (1,8);
        
        \draw [ultra thick, cyan!20!blue] (2,7) node [dot,fill=black] {} -- (7,7) -- (7,12) node [dot,fill=black] {} -- (12,12) -- (12,11) node [dot,fill=black] {} -- (11,11) -- (11,13) node [dot,fill=black] {} -- (13,13) -- (13,9) node [dot,fill=black] {} -- (9,9) -- (9,5) node [dot,fill=black] {} -- (5,5) -- (5,2) node [dot,fill=black] {} -- (2,2) -- (2,7);

        \node at (1,0) {$1$};
        \node at (2,0) {$2$};
        \node at (3,0) {$3$};
        \node at (4,0) {$4$};
        \node at (5,0) {$5$};
        \node at (6,0) {$6$};
        \node at (7,0) {$7$};
        \node at (8,0) {$8$};
        \node at (9,0) {$9$};
        \node at (10,0) {$10$};
        \node at (11,0) {$11$};
        \node at (12,0) {$12$};
        \node at (13,0) {$13$};

        \node at (0,1) {$1$};
        \node at (0,2) {$2$};
        \node at (0,3) {$3$};
        \node at (0,4) {$4$};
        \node at (0,5) {$5$};
        \node at (0,6) {$6$};
        \node at (0,7) {$7$};
        \node at (0,8) {$8$};
        \node at (0,9) {$9$};
        \node at (0,10) {$10$};
        \node at (0,11) {$11$};
        \node at (0,12) {$12$};
        \node at (0,13) {$13$};

\end{tikzpicture}
}
         \caption{Graph of $\sigma$}
         \label{subfig:alpha graph}
     \end{subfigure}
    \caption{Example of an $\alpha$-coloring of $Q^\uparrow = \Tup$, where $\alpha = (7,6)$ and $\lambda = (3,3,3,2,1,1)$.
    The coloring itself (7 \blue and 6 \red boxes) is shown in~\textsc{(a)}.
    The slashes and arrows in~\textsc{(b)} are given by Construction~\ref{constr:canonical cycle} applied to each color separately.
    The arrows determine the associated permutation $\sigma$, where $\sigma^{-1} = (1,6,3,4,10,8)(2,5,9,13,11,12,7)$.
    By permuting entries along the arrows, one obtains the tableau $P$ in~\textsc{(c)}, and the fact that $P$ is standard makes the coloring a true $\alpha$-coloring (by Definition~\ref{def: alpha coloring}).}
    \label{fig: alpha-coloring example}
\end{figure}

\begin{lemma}
\label{lemma:assoc perm}
Let $\alpha, \lambda \vdash n$ and let $Q \in \SYT(\lambda)$ be admissible.
If there exists an $\alpha$-coloring of $Q^\uparrow$, then $\lambda \in \mathcal{S}_\alpha$.
\end{lemma}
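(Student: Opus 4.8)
The plan is to take the associated permutation $\sigma$ of the given $\alpha$-coloring and show that it simultaneously realizes the cycle type $\alpha$ and the RS shape $\lambda$. First I would recall that, by Definition~\ref{def: alpha coloring}, the associated permutation $\sigma$ is defined so that $P = \sigma \cdot Q^\uparrow$, and that part (2) of the same definition guarantees $P \in \SYT(\lambda)$. Since $Q$ is admissible by hypothesis, I can then invoke Corollary~\ref{corollary:MM column-canonical} directly: it gives $\operatorname{RS}(\sigma) = (P, Q)$, and in particular $\sh(\sigma) = \lambda$. This disposes of the ``shape'' half of the claim with essentially no further work.

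The substance of the argument lies in verifying that $\sigma \in \mathcal{C}_\alpha$, i.e., that $\sigma$ has cycle type exactly $\alpha$. Here I would appeal to the way the arrows are drawn in part (2) of the $\alpha$-coloring: they are obtained by applying Step~2 of Construction~\ref{constr:canonical cycle} separately to the boxes of each color $c_i$. The key observation is that Construction~\ref{constr:canonical cycle}, when applied to any set of $m$ boxes, traces out a single closed loop through all $m$ of them; this is precisely the mechanism behind Lemma~\ref{lemma:canonical cycle}, where the canonical cycle on $n$ boxes is shown to lie in $\mathcal{C}_{(n)}$. Applied to color $c_i$, which contains exactly $\alpha_i$ boxes by part (1) of Definition~\ref{def: alpha coloring}, the arrows therefore form a single $\alpha_i$-cycle. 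Since the arrows depict $\sigma^{-1}$ (as in Step~3 of Construction~\ref{constr:canonical cycle}), and since the inverse of a single $k$-cycle is again a single $k$-cycle, it follows that $\sigma$ acts on the entries of color $c_i$ as a single cycle of length $\alpha_i$.

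Finally, because the colors $c_1, \ldots, c_r$ partition all $n$ boxes of $Q^\uparrow$ (whose entries are precisely $1, \ldots, n$), the permutation $\sigma$ decomposes as a product of $r$ disjoint cycles of lengths $\alpha_1, \ldots, \alpha_r$; in particular a color of size one contributes a fixed point, as expected. This is exactly the statement that $\sigma$ has cycle type $\alpha$, so $\sigma \in \mathcal{C}_\alpha$. Combining this with $\sh(\sigma) = \lambda$ yields $\lambda = \sh(\sigma) \in \mathcal{S}_\alpha$, as desired. I expect the only point requiring care to be the cycle-length bookkeeping in the middle paragraph---specifically, confirming that restricting Construction~\ref{constr:canonical cycle} to a single color still produces one full cycle on that color rather than breaking into smaller pieces---but this is immediate from the construction, which is designed precisely so that the entries of each color are cyclically permuted as a single unit.
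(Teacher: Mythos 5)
Your proposal is correct and follows essentially the same route as the paper's proof: the shape claim comes from $P = \sigma \cdot Q^\uparrow$ being standard together with Corollary~\ref{corollary:MM column-canonical}, and the cycle type comes from the arrows of Definition~\ref{def: alpha coloring} forming disjoint cycles of lengths $\alpha_1, \ldots, \alpha_r$. The paper simply states the cycle-type fact in one line, whereas you spell out the bookkeeping (one closed loop per color, and the inverse of a $k$-cycle is a $k$-cycle); this is the same argument, just more explicit.
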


\begin{proof}
    Let $\sigma$ be the associated permutation of an $\alpha$-coloring of $Q^\uparrow$.
    Then we have $\sigma \in \mathcal{C}_\alpha$, since $\sigma$ is given by the arrows in Definition~\ref{def: alpha coloring}, which form disjoint cycles of lengths $\alpha_1, \ldots, \alpha_r$.
    Moreover, since $P = \sigma \cdot Q^\uparrow$ is standard (by Definition~\ref{def: alpha coloring}), Corollary~\ref{corollary:MM column-canonical} implies that ${\rm RS}(\sigma) = (P, Q)$, and in particular $\sh(\sigma) = \lambda$.
    Thus we have both $\sigma \in \mathcal{C}_\alpha$ and $\sh(\sigma) = \lambda$, which by definition~\eqref{S_alpha} implies that $\lambda \in \mathcal{S}_\alpha$.
\end{proof}

In the special case where $Q = \Tdown$ in Lemma~\ref{lemma:assoc perm} (which will usually be the case in our constructions), we can say even more: in this case, the one-line notation of $\sigma$ is precisely the reverse column word of $P = \sigma \cdot Q^\uparrow$.
In fact, Construction~\ref{constr:canonical cycle} was simply the special case of an $\alpha$-coloring of $\Tup$ where $r=1$.
(In other words, Lemma~\ref{lemma:assoc perm} generalizes Lemma~\ref{lemma:canonical cycle} for $r \geq 1$.)
Unlike the $r=1$ case, where the $\alpha$-coloring was unique (there being only one color), for $r > 1$ there may be several $\alpha$-colorings of a given tableau $Q^\uparrow$, each determining a different $\sigma \in \mathcal{C}_\alpha$ such that $\sh(\sigma) = \lambda$.

For our immediate purposes where $r=2$, we will refer to the two colors $c_1$ and $c_2$ as \emph{\blue} and \emph{\red}, respectively.
(See the example in Figure~\ref{fig: alpha-coloring example}.)
In the rest of this section, we will effectively prove one direction of Theorem~\ref{thm:main result} by constructing $\alpha$-colorings that apply to every shape $\lambda \in \mathcal{B}_\alpha$ except for the table of exceptions given in the theorem.
(There is one sporadic family of pairs $(\alpha,\lambda)$ where $\lambda \in \mathcal{S}_\alpha$ but no $\alpha$-coloring exists; in this case, we will exhibit the requisite $\sigma$ directly.)
By Lemma~\ref{lemma:assoc perm}, the existence of each such $\alpha$-coloring implies that $\lambda \in \mathcal{S}_\alpha$. 
The main ingredient of our $\alpha$-colorings will be the following scheme which we call a \emph{spiral coloring} of a column (see examples in Figure~\ref{fig:spiral coloring}).

 \begin{figure}[t]
    \centering
    \begin{subfigure}[b]{0.4\textwidth}
         \centering
         \scalebox{.75}{
\begin{tikzpicture}[scale=.5, node distance=0, every node/.style={draw, minimum size=0.7cm, scale=.7, anchor=center}]

\node [fill=\Red] (10) at (0,10) {10};
\node [fill=\Blue] (9) at (0,9) {9};
\node [fill=\Red] (8) at (0,8) {8};
\node [fill=\Red] (7) at (0,7) {7};
\node [fill=\Red] (6) at (0,6) {6};
\node [fill=\Red] (5) at (0,5) {5};
\node [fill=\Blue] (4) at (0,4) {4};
\node [fill=\Red] (3) at (0,3) {3};
\node [fill=\Blue] (2) at (0,2) {2};
\node [fill=\Red] (1) at (0,1) {1};

\draw[->, very thick, red!70!black,  bend left=60, shorten >= 5 pt, shorten <= 5 pt] (1.west) to (10.west);
\draw[->, very thick, red!70!black,  bend left=60, shorten >= 5 pt, shorten <= 5 pt] (10.east) to (3.east);
\draw[->, very thick, red!70!black,  bend left=60, shorten >= 5 pt, shorten <= 5 pt] (3.west) to (8.west);
\draw[->, very thick, red!70!black,  bend left=60, shorten >= 5 pt, shorten <= 5 pt] (8.east) to (5.east);
\draw[->, very thick, red!70!black,  bend left=60, shorten >= 5 pt, shorten <= 5 pt] (5.west) to (7.west);
\draw[->, very thick, red!70!black,  bend left=90, shorten >= 2 pt, shorten <= 2 pt] (7.east) to (6.east);

\draw[->, very thick, cyan!70!black,  bend left=60, shorten >= 5 pt, shorten <= 5 pt] (2.west) to (9.west);
\draw[->, very thick, cyan!70!black,  bend left=60, shorten >= 5 pt, shorten <= 5 pt] (9.east) to (4.east);

\end{tikzpicture}
}
         \caption{Spiral coloring with $3$ \blue boxes and $7$ \red boxes, where \red is on the outside.
        The \blue terminal box has the entry 4, and the \red terminal box has the entry 6.}
     \end{subfigure}
     \qquad
     \begin{subfigure}[b]{0.4\textwidth}
         \centering
         \scalebox{.75}{
\begin{tikzpicture}[scale=.5, node distance=0, every node/.style={draw, minimum size=0.7cm, scale=.7, anchor=center}]

\node [fill=\Blue] (10) at (0,10) {10};
\node [fill=\Red] (9) at (0,9) {9};
\node [fill=\Blue] (8) at (0,8) {8};
\node [fill=\Red] (7) at (0,7) {7};
\node [fill=\Red] (6) at (0,6) {6};
\node [fill=\Blue] (5) at (0,5) {5};
\node [fill=\Red] (4) at (0,4) {4};
\node [fill=\Blue] (3) at (0,3) {3};
\node [fill=\Red] (2) at (0,2) {2};
\node [fill=\Blue] (1) at (0,1) {1};

\draw[->, very thick, red!70!black,  bend left=60, shorten >= 5 pt, shorten <= 5 pt] (2.west) to (9.west);
\draw[->, very thick, red!70!black,  bend left=60, shorten >= 5 pt, shorten <= 5 pt] (9.east) to (4.east);
\draw[->, very thick, red!70!black,  bend left=60, shorten >= 5 pt, shorten <= 5 pt] (4.west) to (7.west);
\draw[->, very thick, red!70!black,  bend left=60, shorten >= 2 pt, shorten <= 2 pt] (7.east) to (6.east);

\draw[->, very thick, cyan!70!black,  bend left=60, shorten >= 5 pt, shorten <= 5 pt] (1.west) to (10.west);
\draw[->, very thick, cyan!70!black,  bend left=60, shorten >= 5 pt, shorten <= 5 pt] (10.east) to (3.east);
\draw[->, very thick, cyan!70!black,  bend left=60, shorten >= 5 pt, shorten <= 5 pt] (3.west) to (8.west);
\draw[->, very thick, cyan!70!black,  bend left=60, shorten >= 5 pt, shorten <= 5 pt] (8.east) to (5.east);

\end{tikzpicture}
}
         \caption{Spiral coloring with $5$ \blue boxes and $5$ \red boxes, where \blue is on the outside.
        The \blue terminal box has the entry 5, and the \red terminal box has the entry 6.}
     \end{subfigure}
    \caption{Examples of spiral colorings (Definition~\ref{def:spiral coloring}).
    For these examples we color the first column of $\Tup$.}
    \label{fig:spiral coloring}
\end{figure}
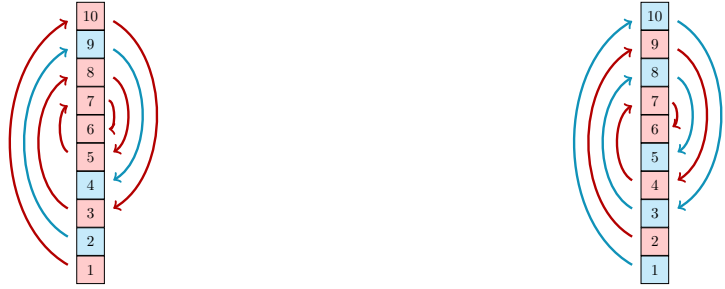

\begin{definition}[Spiral colorings and terminal boxes]
    \label{def:spiral coloring}
    Given a column in a tableau, and a known number of \blue and \red boxes (summing to the length of the column), we give the column the \emph{spiral coloring} (with a designated color \emph{on the outside}), as follows:
\begin{itemize}
    \item For each color alternately (starting with the designated \emph{outside color}), color the bottommost available box, followed by the topmost available box.
    \item As soon as one color is exhausted, color the remaining boxes with the opposite color.
\end{itemize}
In a spiral coloring, we define the \emph{terminal box} of each color to be the last box that is given that color in the process described above. 
\end{definition}

The term \emph{spiral coloring} is due to the fact that the coloring can be viewed as the cross section obtained by interlacing a \blue and a \red spiral (both directed clockwise, the outer spiral having the outside color), and each \emph{terminal box} is indeed the endpoint of a spiral.
In fact, each of these spirals is precisely the spiral of arrows in the first tableau column in Construction~\ref{constr:canonical cycle} (upon restricting one's attention to the boxes of a single color).
Thus whenever we give the first column of $Q^\uparrow$ a spiral coloring, the entire column (with the exception of the bottommost two boxes) automatically satisfies the condition for an $\alpha$-coloring in Definition~\ref{def: alpha coloring}.
For the bottommost two boxes (which are slashed in Construction~\ref{constr:canonical cycle}), the permuted entries are taken from some column to the right, and are thus greater than all the entries above them; in our proofs, we therefore need only show that these two bottommost entries are in increasing order.

We give a brief overview of our constructions of $\alpha$-colorings.
Given $\alpha=(\alpha_1, \alpha_2)$ and $\lambda$, we specify an admissible tableau $Q \in \SYT(\lambda)$ (usually $Q = \Tdown$); then in the tableau $Q^\uparrow$, we color $\alpha_1$ many boxes \blue, and $\alpha_2$ many boxes \red. 
As a guiding principle, we generally color boxes \red starting from the left, and \blue starting from the right.
Typically there is a  column in which both colors must appear, for which we specify a spiral coloring to obtain the desired result.
In certain edge cases, such as when there are too few \red boxes to escape the first column, or too few \blue boxes to escape the tail, it is also necessary to mix colors in a second column or along the tail.
(There is also one case, namely case (1a) of Construction~\ref{constr:2-column shapes}, where no $\alpha$-coloring exists; in this case we exhibit $\sigma \in \mathcal{C}_\alpha$ such that $\sh(\sigma) = \lambda$.) 
After each construction, we state a lemma verifying that the given coloring is in fact an $\alpha$-coloring; in each case, the proof amounts to showing that $\sigma \cdot Q^\uparrow$ is a standard tableau, where $\sigma \in \mathcal{C}_\alpha$ is the associated permutation of the $\alpha$-coloring (Definition~\ref{def: alpha coloring}).

\begin{tcolorbox}[parbox=false,breakable,enhanced
]
    \textbf{Outline: constructions of $\alpha$-colorings}

    \tcblower

    Let $\alpha=(\alpha_1,\alpha_2)\vdash n$ and $\lambda \in \mathcal{B}_\alpha$, excluding the shapes $\lambda$ listed in the table in Theorem~\ref{thm:main result}.
    For a specified admissible tableau $Q \in \SYT(\lambda)$, we construct an $\alpha$-coloring of $Q^\uparrow$, determined by the following conditions on~$\alpha$ and~$\lambda$:

\begin{itemize}
    \item $\alpha_2\geq 3$, and $\lambda$ has at least three columns ($\lambda_1 \geq 3$), with at least two boxes in the second column ($\lambda_2 \geq 2$); exclude the special case $\alpha_2=\lambda'_1 = 3$:

        \begin{itemize}
            \item Colors meet in the middle ($\lambda'_1 < \alpha_2 < n - \lambda_1 + \lambda_2$): Construction~\ref{constr:nice}
            
            \item Red boxes fit in the first column ($\alpha_2 \leq \lambda'_1$): Construction~\ref{constr:tricky left}
            
            \item Blue boxes fit in the tail ($\alpha_2 \geq n - \lambda_1 + \lambda_2$): Construction~\ref{constr:long tails} 
        \end{itemize}
    \item $\alpha_2\geq 3$, remaining cases:
    
        \begin{itemize}
            \item $\alpha_2=3$ and $\lambda$ has three rows ($\lambda_1'=3$): Construction~\ref{constr:alpha2=colambda1=3}
            \item Remaining cases where $\lambda$ has two columns ($\lambda_1 = 2$): Construction~\ref{constr:2-column shapes}
            \item Remaining cases where $\lambda$ is a hook ($\lambda_2 = 1$): Construction~\ref{constr:hooks}
        \end{itemize}
    \item $\alpha_2 = 2$: Construction~\ref{constr:2}
    \item $\alpha_2 = 1$: Construction~\ref{constr:1}
\end{itemize}

\end{tcolorbox}

\ytableausetup{smalltableaux}

\begin{construction}
\label{constr:nice}
    Here we assume that $\alpha_2 \geq 3$, that $\lambda$ has at least three columns and is not a hook, and that $\lambda'_1 < \alpha_2 < n - \lambda_1 + \lambda_2$.
    Take $Q = \Tdown$, and color $\Tup$ as follows:
    
    From the left (resp., right) side, color $\alpha_2$ many boxes \red (resp., $\alpha_1$ many boxes \blue) until the leftover boxes of each color meet in a single ``mixing'' column.
    (If there are no leftovers, then the coloring is complete.)
    In the mixing column, color the top box 
    \red.
        For the rest of the mixing column, use the spiral coloring with \blue on the outside. See below for an example.
        
    \[
    \alpha = (8,7): \quad \begin{ytableau}[*(\Blue)]
        *(\Red) 5 & *(\Red) 9 & 13 & 14 & 15\\
        *(\Red)4 & 8 & 12\\
        *(\Red)3 & *(\Red)7 & 11\\
        *(\Red)2 & 6 & 10\\
        *(\Red)1
    \end{ytableau}
    \qquad\qquad
    \sigma^{-1} = (1,5,2,4,3,9,7)(6,8,13,11,12,14,15,10)
    \]
    
\end{construction}

\begin{lemma}
    \label{lemma:nice}
    Construction~\ref{constr:nice} produces an $\alpha$-coloring of $\Tup$.
\end{lemma}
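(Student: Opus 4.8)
The plan is to apply Corollary~\ref{corollary:MM column-canonical}: since $Q = \Tdown$ is admissible, it suffices to show that $P = \sigma\cdot\Tup$ is a standard Young tableau, where $\sigma$ is the associated permutation of the coloring. Once this is done, the coloring is by definition an $\alpha$-coloring, and Lemma~\ref{lemma:assoc perm} gives $\lambda\in\mathcal{S}_\alpha$. First I would record the shape of the coloring. The hypothesis $\alpha_2 > \lambda'_1$ forces the \red boxes to spill past the first column, while $\alpha_1 = n-\alpha_2 > \lambda_1-\lambda_2$ forces the \blue boxes to more than fill the tail; hence the two colors overlap in a single mixing column $c$ with $2\le c\le\lambda_2$, so that $\lambda'_c\ge 2$. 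Thus columns $1,\dots,c-1$ are entirely \red, columns $c+1,\dots$ are entirely \blue, and only column $c$ is spiral-colored. Writing $L=\lambda'_1+\cdots+\lambda'_{c-1}$, the entries of $\Tup$ in column $c$ are $L+1,\dots,L+\lambda'_c$ from bottom to top, so they decrease down the column, while every $\Tup$-entry to the right of column $c$ exceeds $L+\lambda'_c$.

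For the monochromatic columns I would reuse the argument of Lemma~\ref{lemma:canonical cycle}, applied separately to each color. The \red boxes---columns $1,\dots,c-1$ together with the \red boxes of column $c$, which form the rightmost \red column---constitute a single instance of Construction~\ref{constr:canonical cycle}; its arrows reverse the vertical order of the interior entries of each \red column (so these increase downward in $P$), and the top (resp.\ bottom) entry of each \red column is received from the \red column to its left (resp.\ right), exactly as in the $r=1$ case. The identical reasoning governs the \blue boxes, whose leftmost column consists of the \blue boxes of column $c$. Using that the $\Tup$-entries grow block-by-block from left to right, this shows that entries increase down every column except possibly $c$, and that rows increase among the columns $1,\dots,c-1$ and among the columns $c+1,\dots$, leaving the interfaces with column $c$ to be treated separately.

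The crux is column $c$, where the spiral coloring is doing the real work. Coloring the top box \red makes the \red chain enter column $c$ at row $1$, so that box correctly receives a value $\le L$ from column $c-1$; placing \blue on the outside makes the bottom box of column $c$ \blue and (when there are at least two \blue boxes) slashed, so that it correctly receives a value from column $c+1$ exceeding $L+\lambda'_c$. I would then show that the \red entries of column $c$ (increasing downward by the reversal above, the lowest \red box receiving the local maximum $L+\lambda'_c$) interleave correctly with the \blue entries (likewise increasing, the top \blue box receiving the local minimum $L+1$ through the closing of the \blue cycle). I expect this interleaving to be the main obstacle: it amounts to verifying the two color-transition inequalities as one reads down column $c$, and the value received at a terminal box changes in the boundary sub-cases where column $c$ contains only one \red box or only one \blue box---there the relevant box is unslashed and its entry arrives via the cycle-closing rather than via the local reversal. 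These finitely many sub-cases I would check directly from the spiral structure of Definition~\ref{def:spiral coloring}.

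Finally I would dispatch the remaining row conditions: that entries increase across the interfaces between column $c$ and its neighbors $c-1$ and $c+1$, and along the top row out through the tail. Each of these follows from the explicit description, obtained above, of where each entry of column $c$ originates (left neighbor, right neighbor, or cycle-closing). Having verified that $P$ increases along every row and down every column, we conclude $P\in\SYT(\lambda)$, whence Corollary~\ref{corollary:MM column-canonical} and Lemma~\ref{lemma:assoc perm} complete the proof.
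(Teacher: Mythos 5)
Your overall strategy coincides with the paper's: reduce the claim to showing $P = \sigma \cdot \Tup$ is standard via Corollary~\ref{corollary:MM column-canonical}, handle the monochromatic columns by Lemma~\ref{lemma:canonical cycle} applied to each color separately, and analyze the mixing column via the spiral coloring. However, there is a genuine gap at your first step. You assert that the hypotheses $\lambda'_1 < \alpha_2 < n - \lambda_1 + \lambda_2$ force the two colors to ``overlap in a single mixing column.'' This inference is false: the \red boxes spilling past column $1$ and the \blue boxes more than filling the tail does not prevent the two color regions from meeting exactly at a column boundary, in which case there are no leftover boxes and no mixing column at all (Construction~\ref{constr:nice} explicitly allows this: ``If there are no leftovers, then the coloring is complete.''). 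A concrete instance is $\lambda = (4,4,4)$, $\alpha = (6,6)$: here $\lambda'_1 = 3 < 6 < 12 = n - \lambda_1 + \lambda_2$, yet the six \red boxes exactly fill columns $1$ and $2$. Since everything you write after that sentence is conditioned on the existence of the mixing column $c$, this case is left unhandled. The fix is short and uses only tools you already invoke---the paper disposes of it in one sentence: with no mixing column, apply Lemma~\ref{lemma:canonical cycle} to each color separately and note that every \blue entry exceeds every \red entry, so the row conditions across the color interface hold---but as written your proof does not cover it.

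Apart from this, your treatment of the mixing column agrees in substance with the paper's proof: the interior entries of column $c$ in $P$ come from that same column of $\Tup$ and increase downward by the spiral coloring; the top entry arrives from the \red column to the left (hence is the column minimum); and the bottom entry arrives from the \blue region to the right (hence is the column maximum), which also settles the row comparisons with the neighboring columns. One further caution: you defer the sub-cases where column $c$ contains only one \red or only one \blue box to ``finitely many sub-cases I would check directly''; the paper absorbs these with the observation that the only slashed box in the mixing column is the bottom \blue box unless it is the only \blue box there. Those checks would need to be carried out for your sketch to be complete, but that is a matter of detail rather than of approach.
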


\begin{proof}
    If there is no mixing column, then the result follows directly from Lemma~\ref{lemma:canonical cycle} applied to each of the two subdiagrams of each color, since every entry in the \blue subdiagram is greater than
    every entry in the \red subdiagram.

    Hence, suppose that there is a mixing column.
    By assumption, the mixing column is neither the first nor last column of $\Tup$.
    Note that (in the context of Construction~\ref{constr:canonical cycle}) the only slashed box in the mixing column is the \blue box on bottom (unless it is the only \blue box in the column).
    All except the top and bottom entries in this column of $\sigma \cdot \Tup$ are obtained from entries in this same column of $\Tup$, and the spiral coloring guarantees that they increase downwards.
    The top entry of the mixing column comes from the \red column to its left, while the bottom entry comes from the \blue column to its right, and so these are the least and greatest entries in the column, meaning that the entire mixing column is standard in $\sigma \cdot \Tup$.
    By the same argument, every entry in the mixing column in $\sigma \cdot \Tup$ is greater than the entry to its left and less than the entry to its right.
    The remaining subdiagrams of each color are standard by Lemma~\ref{lemma:canonical cycle}, and thus the entire tableau $\sigma \cdot \Tup$ is standard.
\end{proof}

\begin{construction}
    \label{constr:tricky left}
    Here we assume that $\alpha_2 \geq 3$, that $\lambda$ has at least three columns and is not a hook, and that $\alpha_2 \leq \lambda'_1$.
    We omit the case $\alpha_2 = \lambda'_1 = 3$, which is handled in Construction~\ref{constr:alpha2=colambda1=3}.
    Take $Q = \Tdown$, and in $\Tup$ color every column to the right of column 2 entirely \blue.
    Then color columns 1 and 2 as follows:
    
    \begin{enumerate}
    \item If $\alpha_2 < \lambda'_1$, then there is exactly one \red box in column 2, specifically one of the two boxes on top.
    The color of the top box in column 2 matches the color of the lower terminal box in the spiral coloring of column 1:
    \begin{enumerate}
        \item If $\lambda'_2 = 2$, then give column 1 the spiral coloring with \blue on the outside.
        \item If $\lambda'_2 > 2$, then give column 1 the spiral coloring with \red on the outside.
    \end{enumerate}

    \item If $\alpha_2 = \lambda'_1$, then there are exactly two \red boxes in column 2.
        \begin{enumerate}
            \item If $\lambda'_2 \leq 3$, then give column 1 the spiral coloring with \blue on the outside.
            The two \red boxes in column 2 are the top and bottom boxes.
            \item If $\lambda'_2 > 3$, then give column 1 the spiral coloring with \red on the outside.
            In column 2, one of the \red boxes is the second box from the bottom.
            The other \red box is one of the top two boxes; the color of the top box is determined by the color of the lower terminal box in the spiral coloring of column 1.
        \end{enumerate}        
    \end{enumerate}
See below for a examples of each case.

    \[
\alpha = (8,7): \quad \begin{ytableau}[*(\Blue)]
     8 & *(\Red)10 & 12 & 14 & 15\\
     *(\Red)7 & 9 & 11 & 13\\
     *(\Red)6 \\
     *(\Red)5\\
     *(\Red)4\\
     *(\Red)3\\
     *(\Red)2\\
     1\\
     \none
\end{ytableau}
\qquad
\begin{ytableau}[*(\Blue)]
     *(\Red)9 & 12 & 14 & 15\\
     8 & *(\Red)11 & 13\\
     *(\Red)7 & 10 \\
     *(\Red)6\\
     *(\Red)5\\
     4\\
     *(\Red)3\\
     2\\
     *(\Red)1
\end{ytableau}
\qquad
\begin{ytableau}[*(\Blue)]
     7 & *(\Red)10 & 13 & 15\\
     *(\Red)6 & 9 & 12 & 14 \\
     *(\Red)5 & *(\Red)8 & 11\\
     *(\Red)4\\
     *(\Red)3\\
     *(\Red)2\\
     1\\
     \none\\
     \none
\end{ytableau}
\qquad
\begin{ytableau}[*(\Blue)]
     *(\Red)7 & *(\Red)11 & 13 & 14 & 15\\
     6 & 10 & 12\\
     *(\Red)5 & *(\Red)9\\
     *(\Red)4 & 8\\
     *(\Red)3\\
     2\\
     *(\Red)1\\
     \none\\
     \none
\end{ytableau}
\]

\end{construction}

\begin{lemma}
    \label{lemma:tricky left}
    Construction~\ref{constr:tricky left} produces  an $\alpha$-coloring of $\Tup$.
\end{lemma}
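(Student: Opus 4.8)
The plan is to verify the two conditions of Definition~\ref{def: alpha coloring}. Condition~(1), that there are exactly $\alpha_2$ \red boxes and $\alpha_1$ \blue boxes, is immediate from the construction (in each case one counts $\alpha_2-1$ or $\alpha_2-2$ \red boxes in column~$1$, plus the one or two \red boxes in column~$2$). Thus the entire content is condition~(2): that $P=\sigma\cdot\Tup$ is a standard Young tableau. Once this is shown, Corollary~\ref{corollary:MM column-canonical} (applied to the admissible tableau $Q=\Tdown$) together with Lemma~\ref{lemma:assoc perm} will also yield $\lambda\in\mathcal{S}_\alpha$. The key structural observation is that every column to the right of column~$2$ is entirely \blue, so the all-\blue region is governed by a single \blue cycle arranged exactly as in Construction~\ref{constr:canonical cycle}; the argument of Lemma~\ref{lemma:canonical cycle} then applies verbatim there, showing that the interior entries reverse to increase downward and that the extreme entries are the least and greatest in their columns. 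Since the \red boxes are confined to columns~$1$ and~$2$, the only places where standardness is not already guaranteed are the bottommost two boxes of column~$1$, the \red box(es) of column~$2$ together with their neighbors, and the top row.

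First I would dispose of column~$1$. By the remark following Definition~\ref{def:spiral coloring}, giving column~$1$ the spiral coloring automatically makes its entries increase downward in $P$, with the sole exception of the two bottommost (slashed) boxes, whose entries are imported from columns further to the right and hence exceed every entry above them. Because the spiral alternates, the bottom box carries the outside color and the second-from-bottom box carries the inside color; it therefore remains only to check that the inside color's imported entry is smaller than the outside color's imported entry. In each of the cases (1a), (1b), (2a), (2b) I would read both entries off the arrow pattern of Construction~\ref{constr:canonical cycle} applied to the two colors, where the choice of outside color is precisely what is prescribed by the case.

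The heart of the proof is column~$2$, and this is exactly where the case division by $\lambda'_2$ and by whether $\alpha_2<\lambda'_1$ or $\alpha_2=\lambda'_1$ does its work. In every case the \red box(es) are placed so that the color of the top box of column~$2$ agrees with the color of the \emph{lower terminal box} of column~$1$'s spiral; the point to establish is that this matching forces the \red cycle to continue from the bottom of column~$1$ into the top of column~$2$, which in turn makes the \red entry landing at the top of column~$2$ larger than the top of column~$1$ (to its left) and smaller than both the top of column~$3$ (to its right) and the box directly below it. The remaining \blue entries of column~$2$ are handled by the canonical-cycle argument as before, and the top row is checked by the same reasoning used in Lemma~\ref{lemma:nice}: every top-row entry except the leftmost is imported from the preceding column and is therefore larger than its left neighbor.

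I expect the main obstacle to be case (2b), where $\alpha_2=\lambda'_1$ and $\lambda'_2>3$: here there are two \red boxes in column~$2$, one of which is the second box from the bottom. Unlike the slashed boxes, this lower \red box is \emph{not} imported from the right, so its entry is determined by the continuation of the \red spiral, and one must verify simultaneously three inequalities — that it exceeds the \blue box immediately above it, that it falls below the (large, imported) \blue box at the very bottom of column~$2$, and that it stays below its horizontal neighbor in column~$3$ — while also confirming that the companion \red box among the top two boxes sits at the parity forced by the spiral. This is the most delicate bookkeeping, but as in Lemma~\ref{lemma:nice} it reduces to comparing the positions in $\Tup$ of the relevant entries along the arrows.
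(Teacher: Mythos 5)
Your skeleton is exactly the paper's: treat everything right of column~2 via Lemma~\ref{lemma:canonical cycle} applied to the all-blue region, reduce column~1 to the bottom two slashed boxes (whose imported entries exceed everything above them), and settle the top of column~2 by the lower-terminal matching, with case~(2b)'s extra red box as the remaining check. The counting for condition~(1) and the observation that the spiral coloring forces the bottom box to be the outside color and the second-from-bottom the inside color are also correct.

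However, two things keep this from being a proof rather than a plan. First, the decisive inequalities are all deferred (``I would read both entries off the arrow pattern''), and in cases~(1a)/(2a) the fact that makes the comparison work is never surfaced: there the \emph{outside} (blue) import into the bottom of column~1 exceeds the \emph{inside} (red) import precisely because column~2 contains at most one blue box, which as a singleton blue column is unslashed in Construction~\ref{constr:canonical cycle}, so the blue slash-chain bypasses column~2 and the blue import originates strictly to its right --- this is the paper's central observation for those two cases, and nothing in your sketch guarantees you would find it rather than a false comparison. Second, your one articulated mechanism for column~2 misstates the arrow structure: the entry landing at the top of column~2 does not come ``from the bottom of column~1'' --- it arrives from the matching color's \emph{terminal} box in the middle of column~1 (the bottom two slashed boxes of column~1 are \emph{receivers} of entries from column~2 and beyond, not sources), and that entry need not be red, since in case~(1b) the top of column~2 is blue. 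The matching with the lower terminal box matters because the lower terminal carries the \emph{smaller} of the two terminal entries (entries of $\Tup$ decrease downward), so placing its color on top of column~2 puts the smaller terminal entry above the larger one; your version of the argument, taken literally, would not yield the needed inequality. With these repairs the plan does succeed, and your instinct that (2b) is the delicate case is sound --- though note its lower red box receives the \emph{old top-red entry of column~2} via the in-column red arrow, which makes your three inequalities routine.
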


\begin{proof}\ 

\begin{enumerate}
    \item 
    \begin{enumerate} 
    \item In this case, there is only one \blue box in column 2; therefore the bottom entry in column 1 of $\sigma \cdot \Tup$, which is \blue (due to the spiral coloring with \blue on the outside), comes from some column to the right of column 2 in $\Tup$, and is therefore greater than the \red entry just above it, which came from column 2 of $\Tup$.
    This shows that column 1 of $\sigma \cdot \Tup$ is standard.
    (As usual, the standardness of the rest of $\sigma \cdot \Tup$ now easily follows from the nature of the spiral coloring, from matching the top box of column~2 with the lower terminal box of column~1, and from Lemma~\ref{lemma:canonical cycle} applied to each color.)

    \item In this case, there are at least two \blue boxes in column 2, at least one of which is below the \red box.
    Thus the bottommost entry in column 2 is \blue, and it appears at the bottom of column 1 in $\sigma \cdot \Tup$.
    Since this entry is greater than the \red entry in column 2 of $\Tup$, and since this \red entry appears just above the bottom entry in column 1 of $\sigma \cdot \Tup$, we conclude that column 1 of $\sigma \cdot \Tup$ is standard.
    The standardness of column 2 follows from our coloring of the topmost box.

    \end{enumerate}

    \item 

    \begin{enumerate}
        \item In this case, the only two \blue boxes in column 1 are the top and bottom boxes, and therefore the lower terminal box is \red. 
        Thus coloring the top box of column 2 \red guarantees that column 2 of $\sigma \cdot \Tup$ is standard.
        Since there is at most one \blue box in column 2, the same argument as in case (1a) guarantees the standardness of column 1 of $\sigma \cdot \Tup$.

        \item In this case, the standardness of columns 1 and 2 of $\sigma \cdot \Tup$ is proven by the same argument as in case (1b). \qedhere
    \end{enumerate}
\end{enumerate}
    
\end{proof}

\begin{construction}
\label{constr:long tails}
    Here we assume that $\alpha_2 \geq 3$, that $\lambda$ has at least three columns and is not a hook, and that $\alpha_2 \geq n + \lambda_1 - \lambda_2$ (in other words, the tail of $\lambda$ has length at least $\alpha_1$).
    Take $Q = \Tdown$, and color $\Tup$ as follows:

    In the column immediately left of the tail, color the bottom box \blue, and color the rest of the boxes to the left of the tail \red.
    The remaining \blue boxes are in the tail, where they alternate with \red boxes (starting with \blue, from left to right). See below for examples. 

    \[
    \alpha = (8,7): \qquad
    \begin{ytableau}
    [*(\Blue)]
    *(\Red)4 & *(\Red)7 & 8 & *(\Red)9 & 10 & 11 & 12 & 13 & 14 & 15\\
    *(\Red)3 & *(\Red)6\\
    *(\Red)2 & 5\\
    *(\Red)1
\end{ytableau}
\qquad 
    \begin{ytableau}[*(\Blue)]
     *(\Red)3 & *(\Red)5 & 6 & *(\Red)7 & 8 & *(\Red)9 & 10 & *(\Red)11 & 12 & 13 & 14 & 15\\
     *(\Red)2 & 4\\
     *(\Red)1\\
     \none
\end{ytableau}
\]
\end{construction}

\begin{lemma}
    \label{lemma:long tails}
    Construction~\ref{constr:long tails} produces an $\alpha$-coloring of $\Tup$.
\end{lemma}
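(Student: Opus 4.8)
The plan is to apply Corollary~\ref{corollary:MM column-canonical}: since $Q = \Tdown$ is admissible, it suffices to prove that $P = \sigma \cdot \Tup$ is a standard tableau, where $\sigma \in \mathcal{C}_\alpha$ is the associated permutation read off from the \red and \blue arrows. First I would pin down the two cycles. Writing $t = \lambda_1 - \lambda_2$ for the length of the tail, the \red boxes are all of columns $1, \ldots, \lambda_2$ except the bottom box $b_0$ of column $\lambda_2$, together with the \red boxes of the tail; the \blue boxes are $b_0$ together with the \blue boxes of the tail. I would begin with the bookkeeping that makes the coloring well-posed: there are $n - t - 1$ non-tail \red boxes, so the hypothesis $t \geq \alpha_1$ leaves $t + 1 - \alpha_1 \geq 1$ \red and $\alpha_1 - 1$ \blue boxes for the tail. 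A short count, using $\alpha_2 \leq \alpha_1$ (so $\alpha_2 \leq n/2$) together with $\lambda_1 \leq n - 2$ (from $\lambda \in \mathcal{B}_\alpha$), gives $t \leq n - 3 < 2\alpha_1 - 2$; hence strictly more \blue than \red boxes sit in the tail, the alternating pattern ends in a run of \blue, the rightmost tail box is \blue, and $b_0$ receives the entry $n$.

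Next I would clear away the ``bulk'' of $P$ using the reasoning of Lemma~\ref{lemma:canonical cycle}. Columns $1, \ldots, \lambda_2 - 1$ are entirely \red, so in each the interior entries are reversed (increasing downward), the top entry is least (arriving from the left) and the bottom (slashed) entry is greatest (arriving from the right along the \red cycle); the \red part of column $\lambda_2$ is governed the same way, as are the comparisons between adjacent columns in rows $2$ and below. The only new point in the left block is that $b_0$, at the foot of column $\lambda_2$, receives the entry $n$ and hence exceeds the \red entry directly above it, so column $\lambda_2$ is standard.

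The crux is the tail row, and this is where I expect the real work. Write the tail entries of $\Tup$ as $m+1 < \cdots < m+t = n$ from left to right, with $m = n - t$, so that every non-tail entry is $\le m$. The key computation is that a \blue tail box receives the $\Tup$-entry of the preceding \blue box in the \blue cycle (the entry of $b_0$ for the leftmost \blue box, the previous \blue tail entry otherwise), and likewise a \red tail box receives the entry of the preceding \red box (the innermost \red box of column $\lambda_2$ for the leftmost \red tail box, the previous \red tail entry otherwise). Thus the tail of $P$ has exactly two ``small'' values, namely the entry of $b_0$ at position $1$ and the innermost \red entry of column $\lambda_2$ at position $2$, and these are in order because the latter ($\ge s+2$, where $s = \sum_{j < \lambda_2} \lambda'_j$) exceeds the former ($= s+1$); every remaining position $i \ge 3$ receives the value $m + i - \delta(i)$, where $\delta(i) \in \{1,2\}$ is the gap back to the previous box of the same color. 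The whole tail is then increasing once one checks that no step can have $\delta(i) = 1$ followed by $\delta(i+1) = 2$, which is ruled out by the coloring pattern (positions $i-1, i$ of one color but $i+1$ of the other would force $i+1$'s previous same-color box to lie strictly before $i - 1$).

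I expect the main obstacle to be precisely this tail analysis: organizing the two interleaved, separately-increasing color subsequences into a single increasing row and verifying the transitions cleanly. The separation ``non-tail entries $\le m <$ tail entries'' together with the $\delta(i) \in \{1,2\}$ description should make the argument short, but it is the one place where the exact alternation ``starting with \blue'' is indispensable, and where the hypotheses $t \ge \alpha_1$ and $\lambda \in \mathcal{B}_\alpha$ are actually used. Finally I would confirm the two junctions left over from the bulk argument---between the top of column $\lambda_2$ and the first tail box in the top row (the former receives an entry $\le s < s+1$, hence smaller), and between the alternating region and the terminal \blue run---both of which follow immediately from the same value formula.
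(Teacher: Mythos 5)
Your proposal is correct and takes essentially the same route as the paper's own proof: the same counting argument showing the red boxes are exhausted before the end of the tail (so $n$ is blue and lands at $b_0$, the bottom of the column preceding the tail), the same reduction of the non-tail portion to the reasoning of Lemma~\ref{lemma:canonical cycle}, and the same comparison of the two "small" entries entering the tail followed by the observation that the alternation forces the tail row of $\sigma \cdot \Tup$ to increase. Your $\delta(i) \in \{1,2\}$ bookkeeping simply makes explicit what the paper compresses into the phrase "the alternation of blue and red then guarantees that the entries in the tail are increasing."
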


\begin{proof}

We begin by claiming that the entry $n$ is \blue.
Since we assume that $\lambda$ is not a hook, there must be at least four boxes outside the tail.
Therefore there are at least four \red boxes in total, of which at least three are outside the tail, and at least one is inside the tail.
Thus we have
\begin{equation*}
\label{redtail inequality 1}
    1 \leq \#\{\text{\red boxes in tail}\} \leq \alpha_2 - 3.
\end{equation*}
But since $\alpha_2 \leq \alpha_1$, and $\alpha_1$ is one more than the number of \blue boxes in the tail, we have
\[
1 \leq \#\{\text{\red boxes in tail}\} \leq \#\{\text{\blue boxes in tail}\} - 2.
\]
This proves our claim that $n$ is \blue, since \blue and \red alternate in the tail from left to right until the \red boxes are exhausted.

To show that $\sigma \cdot \Tup$ is standard, we observe that there is at most one \blue box in each column, and so $\sigma$ permutes the \blue entries cyclically to the right, sending $n$ down to the \blue box which is at the bottom of the column preceding the tail (where the $n$ cannot possibly violate standardness in $\sigma \cdot \Tup$).
Meanwhile, $\sigma$ permutes the \red entries in the tail in a similar way.
Note that the leftmost \red entry in the tail of $\sigma \cdot \Tup$ is greater than the leftmost \blue entry in the tail, since this \red entry lies straight above this \blue entry in $\Tup$; the alternation of \blue and \red then guarantees that the entries in the tail of $\sigma \cdot \Tup$ are increasing from left to right.
The rest of $\sigma \cdot \Tup$ is standard by Lemma~\ref{lemma:canonical cycle}.
\end{proof}

\begin{construction}
    \label{constr:alpha2=colambda1=3}

    Here we assume that $\alpha_2 = 3$ and $\lambda'_1 = 3$.
    We exclude $\lambda = (4,1,1)$, since it occurs in the table in Theorem~\ref{thm:main result}.

    \begin{enumerate}
    \item If $\lambda'_2 = 1$, then let $Q = \Tdown$.
    The \red boxes in $\Tup$ have entries 2, 4, and~6.

    \item If $\lambda'_2 = 2$, then let $Q = \Tdown$.
    The \red boxes in $\Tup$ have entries 2, 4, and~5.

    \item If $\lambda'_2 = 3$, then let $Q$ be the tableau obtained from $\Tdown$ by transposing the entries 3 and 4.
    The \red boxes in  $Q^\uparrow$ have entries 1, 4, and~5.
    \end{enumerate}
See below for an example of each case.

\[
\alpha = (8,3):
\qquad
\begin{ytableau}
    [*(\Blue)]
    3 & *(\Red)4 & 5 & *(\Red)6 & 7 & 8 & 9 & 10 & 11\\
    *(\Red)2\\
    1
\end{ytableau}
\qquad
\begin{ytableau}
    [*(\Blue)]
    3 & *(\Red)5 & 7 & 9 & 10 & 11\\
    *(\Red)2 & *(\Red)4 & 6 & 8\\
    1
\end{ytableau}
\qquad
\begin{ytableau}
    [*(\Blue)]
    *(\Red)4 & 6 & 8 & 10 & 11\\
    2 & *(\Red)5 & 7 & 9\\
    *(\Red)1 & 3
\end{ytableau}
\]
   
\end{construction}

\begin{lemma}
    \label{lemma:alpha2=colambda1=3}

    Construction~\ref{constr:alpha2=colambda1=3} produces an $\alpha$-coloring of $Q^\uparrow$.
\end{lemma}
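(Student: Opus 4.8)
The plan is to unwind Definition~\ref{def: alpha coloring}: an $\alpha$-coloring requires (1) exactly $\alpha_i$ boxes of each color and (2) that permuting the entries of each color along the arrows of Construction~\ref{constr:canonical cycle} yields a standard $P \in \SYT(\lambda)$, all with respect to an \emph{admissible} $Q$. Condition (1) is immediate in each of the three cases, since there are exactly $3 = \alpha_2$ red boxes and hence $n-3 = \alpha_1$ blue boxes. The associated permutation $\sigma$ splits as a red $3$-cycle together with a blue $(n-3)$-cycle, obtained by running the spiral of Construction~\ref{constr:canonical cycle} on the boxes of each color separately (the blue arrows simply skip over the red boxes, and vice versa). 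So everything reduces to two checks: that $Q$ is admissible, and that $P = \sigma \cdot Q^\uparrow$ is standard.

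First I would dispatch admissibility. In cases (1) and (2) we have $Q = \Tdown$, which is admissible automatically (each entry of $\Tdown$ is smaller than every entry in the column to its right, so $\Tup$ has increasing rows). In case (3), where $Q$ is obtained from $\Tdown$ by transposing the entries $3$ and $4$, I would verify directly that $Q$ remains standard and that $Q^\uparrow$ is row-increasing: the transposition only moves the entries in boxes $(3,1)$ and $(1,2)$, so admissibility follows from a handful of local inequalities in the first two columns.

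The heart of the proof is the standardness of $P$, which I would establish by writing $\sigma$ explicitly and then inspecting $P$ case by case. In each case the red $3$-cycle merely permutes the three small entries (respectively $2,4,6$; or $2,4,5$; or $1,4,5$) among their positions in the first two columns, while the blue cycle is the canonical spiral of Construction~\ref{constr:canonical cycle} applied to the blue boxes. Standardness then follows from the two usual observations. For columns: the non-extremal entries of each column of $P$ come from the reversal of the corresponding column of $Q^\uparrow$, hence increase downward, while the top and bottom entries are forced to be the column minimum and maximum (exactly as in the proof of Lemma~\ref{lemma:canonical cycle}); one checks the short first column of $P$ separately, e.g.\ that it reads $1,6,n$ in case (1), which is increasing since $n \geq 7$ once $\lambda = (n-2,1,1)$ is excluded. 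For rows: I would track precisely where the three red entries land and confirm the junction inequalities with their blue neighbors, which is the delicate point.

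The main obstacle, therefore, is this row-increase verification at the boundary between the red and blue regions, together with confirming that the blue arrows still trace a single $(n-3)$-cycle even though the blue boxes no longer form a Young diagram (red punches out, e.g., the entire second column in case (2), or an interior box of the first two columns in case (3)). Because the red boxes are confined to the first two columns and to the smallest few entries, this perturbation is uniformly bounded in $\lambda_1$ and $\lambda_2$, and I would finish by checking the resulting finite list of boundary inequalities, exhibiting the explicit standard $P$ in each case (as illustrated by the displayed examples with $\alpha = (8,3)$).
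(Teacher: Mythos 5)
Your proposal is correct and takes essentially the same approach as the paper: write down $\sigma$ explicitly in each case (the \red $3$-cycle on the small entries together with the \blue canonical spiral on the rest) and verify directly that $P = \sigma \cdot Q^\uparrow$ is standard, which the paper does by simply recording the resulting columns --- $(1,6,n)$ in case (1), $(1,4,x)$ with $x \geq 6$ and $(2,5)$ in case (2), and $(1,3,5)$, $(2,4,x)$ in case (3) --- with the remainder standard by Lemma~\ref{lemma:canonical cycle}. Two minor notes: your explicit admissibility check for the transposed tableau in case (3) is a point the paper leaves implicit (and is indeed needed for Lemma~\ref{lemma:assoc perm} to apply), and the shape excluded in case (1) is only $(4,1,1)$, i.e.\ $n = 6$, rather than the whole family $(n-2,1,1)$ --- that exclusion is precisely what gives $n \geq 7$.
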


\begin{proof}\

    \begin{enumerate}
        \item In the case $\lambda'_2 = 1$, the condition $\lambda \neq (4,1,1)$ guarantees that $n \geq 7$.
        Therefore there is at least one \blue box to the right of the entry 6 (in the tail of $\Tup$).
        It follows that $\sigma \cdot \Tup$ is the standard hook tableau with column $(1,6,n)$, and with row $(1,\ldots,5)$ if $n = 7$, or row $(1, \ldots,5,7, \ldots, n-1)$ if $n \geq 8$.

        \item The given coloring produces a tableau $\sigma \cdot \Tup$ with first column $(1,4,x)$, where $x \geq 6$, and with second column $(2,5)$, and third column either $(3)$ or $(3,y)$, where $y \geq 6$.
        The rest of the tableau $\sigma \cdot \Tup$ follows Construction~\ref{constr:canonical cycle} and thus is standard.

        \item The given coloring produces a tableau $\sigma \cdot Q^\uparrow$ with first column $(1,3,5)$ and second column $(2,4,x)$ where $x \geq 6$.
        The rest of the tableau $\sigma \cdot \Tup$ follows Construction~\ref{constr:canonical cycle} and thus is standard. \qedhere
    \end{enumerate} 
\end{proof}

\begin{construction}
    \label{constr:2-column shapes}
    Here we assume that $\alpha_2 \geq 3$, and that $\lambda$ has exactly two columns.
    We exclude the case where $\alpha = (5,3)$ and $\lambda = (2^4)$, since it occurs in the table in Theorem~\ref{thm:main result}.
    (We also omit the case $\alpha_2 = \lambda'_1 = 3$, which is handled in Construction~\ref{constr:alpha2=colambda1=3}; this forces $\lambda'_1 \geq 4$ in the present construction.)

    \begin{enumerate}
        \item If $\lambda'_2 = 2$, meaning that $\lambda = (2^2,1^{n-4})$, we have two cases:
        
        \begin{enumerate}
            \item If $\alpha_1 = \alpha_2$ is odd, then there is no $\alpha$-coloring of $Q^\uparrow$ for any admissible $Q \in \SYT(\lambda)$.
            However, the shape is still obtainable (with a non-admissible $Q$).
        In this case, we exhibit the permutation
        \[
        \sigma = \left[n-1, n-2, \ldots, \frac{n}{2}+1, 1, n, \frac{n}{2}, \ldots, 3, 2 \right].
        \]
        As an example, for $\alpha = (5,5)$ and $\lambda = (2^2, 1^6)$, the permutation $\sigma$ has the following graph:
        
        \begin{center} 
\scalebox{.85}{
\begin{tikzpicture}[scale=.4, every node/.style={scale=.75},baseline=(current bounding box.center)]
        \draw[lightgray] (1,1) grid (10,10);
        \draw[dotted,thick] (1,1) -- (10,10);

        \draw [ultra thick, red!70!black] (1,9) node [dot,fill=black] {} -- (9,9) -- (9,3) node [dot,fill=black] {} -- (3,3) -- (3,7) node [dot,fill=black] {} -- (7,7) -- (7,5) node [dot,fill=black] {} -- (5,5) -- (5,1) node [dot,fill=black] {} -- (1,1) -- (1,9);
        
        \draw [ultra thick, cyan!20!blue] (2,8) node [dot,fill=black] {} -- (8,8) -- (8,4) node [dot,fill=black] {} -- (4,4) -- (4,6) node [dot,fill=black] {} -- (6,6) -- (6,10) node [dot,fill=black] {} -- (10,10) -- (10,2) node [dot,fill=black] {} -- (2,2) -- (2,8);

        \node at (1,0) {$1$};
        \node at (2,0) {$2$};
        \node at (3,0) {$3$};
        \node at (4,0) {$4$};
        \node at (5,0) {$5$};
        \node at (6,0) {$6$};
        \node at (7,0) {$7$};
        \node at (8,0) {$8$};
        \node at (9,0) {$9$};
        \node at (10,0) {$10$};
        
        \node at (0,1) {$1$};
        \node at (0,2) {$2$};
        \node at (0,3) {$3$};
        \node at (0,4) {$4$};
        \node at (0,5) {$5$};
        \node at (0,6) {$6$};
        \node at (0,7) {$7$};
        \node at (0,8) {$8$};
        \node at (0,9) {$9$};
        \node at (0,10) {$10$};
        
\end{tikzpicture}
}
\end{center}
        
        \item Otherwise, we take $Q = \Tdown$, and in $\Tup$ we color exactly one of the two boxes in column 2 \red.
        If $\alpha_2$ is even (resp., odd), then give column 1 the spiral coloring with \red (resp., \blue) on the outside, and color the top box \red (resp., \blue) in column 2.
        \end{enumerate}

        \item If $\lambda'_2 \geq 3$, then take $Q = \Tdown$, and color $\Tup$ as follows:

        \begin{enumerate}
        
        \item If $\lambda'_1 = \lambda'_2$, then we exclude the case $\alpha = (5,3)$, since it occurs in the table in Theorem~\ref{thm:main result}.
        There are exactly two \red boxes in column 1.
        Give column 2 the \emph{upside-down} spiral coloring with \blue on the outside.
        Give column 1 the spiral coloring where the outside color matches the color of the higher terminal box in column 2.

        \item If $\lambda'_1 > \lambda'_2$, then there is exactly one \red box in column 2, specifically one of the two top boxes.
        Give column~1 the spiral coloring with \red on the outside; the color of the lower terminal box in column 1 determines the color of the top box in column 2.
        \end{enumerate}
        
    \end{enumerate}
See the examples below.
\[
\alpha = (8,7) :
\qquad
\begin{ytableau}
    [*(\Blue)]
    13 & 15\\
    *(\Red)12 & *(\Red)14\\
    11\\
    *(\Red)10\\
    9\\
    *(\Red)8\\
    7\\
    *(\Red)6\\
    5\\
    *(\Red)4\\
    3\\
    *(\Red)2\\
    1
\end{ytableau}
\qquad
\begin{ytableau}
    [*(\Blue)]
    *(\Red)10 & *(\Red)15\\
    9 & 14\\
    *(\Red)8 & 13\\
    7 & 12\\
    *(\Red)6 & 11\\
    *(\Red)5\\
    4\\
    *(\Red)3\\
    2\\
    *(\Red)1\\
    \none\\
    \none\\
    \none
\end{ytableau}
\qquad
\begin{ytableau}
    [*(\Blue)]
    *(\Red)9 & 15\\
    8 & *(\Red)14\\
    *(\Red)7 & 13\\
    *(\Red)6 & 12\\
    *(\Red)5 & 11\\
    4 & 10\\
    *(\Red)3\\
    2\\
    *(\Red)1\\
    \none\\
    \none\\
    \none\\
    \none
\end{ytableau}
\qquad\qquad
\alpha = (8,6): \qquad 
\begin{ytableau}
    [*(\Blue)]
    *(\Red)12 & *(\Red)14\\
    11 & 13\\
    *(\Red)10\\
    9\\
    8\\
    7\\
    6\\
    *(\Red)5\\
    4\\
    *(\Red)3\\
    2\\
    *(\Red)1\\
    \none
\end{ytableau}
\qquad
\begin{ytableau}
    [*(\Blue)]
    7 & 14\\
    *(\Red)6 & *(\Red)13\\
    5 & 12\\
    4 & *(\Red)11\\
    3 & *(\Red)10\\
    *(\Red)2 & *(\Red)9\\
    1 & 8\\
    \none\\
    \none\\
    \none\\
    \none\\
    \none\\
    \none
\end{ytableau}
\]
    
\end{construction}

\begin{lemma}
    \label{lemma:2-column shapes}
    Construction~\ref{constr:2-column shapes} produces an $\alpha$-coloring of $\Tup$ (and, in case (1a), we have $\sigma \in \mathcal{C}_{\alpha}$ and $\sh(\sigma) = \lambda$).
\end{lemma}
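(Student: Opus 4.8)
The plan is to split the proof according to the cases of Construction~\ref{constr:2-column shapes}. For cases (1b), (2a), and (2b) I would reduce everything to showing that $P=\sigma\cdot\Tup$ is a standard Young tableau, where $\sigma$ is the associated permutation of the stated coloring: by Corollary~\ref{corollary:MM column-canonical} together with Lemma~\ref{lemma:assoc perm}, this is precisely what certifies that the coloring is an $\alpha$-coloring, and hence that $\lambda\in\mathcal{S}_\alpha$. Case (1a) is different in kind, since there no coloring is claimed; there I would verify directly that the explicitly exhibited $\sigma$ lies in $\mathcal{C}_\alpha$ and satisfies $\sh(\sigma)=\lambda$. The common engine for (1b), (2a), (2b) is the reversing property already used in Lemma~\ref{lemma:canonical cycle} and in the discussion of spiral colorings preceding the construction: within any column carrying a (possibly upside-down) spiral coloring, permuting the boxes of a single color along the arrows of Construction~\ref{constr:canonical cycle} merely reverses their decreasing order into increasing order, so every entry strictly between the top and bottom box of a column comes out increasing downward automatically. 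This lets me discard the interior of each column and reduce to two checks: (i) the bottom one or two entries of each column are correctly ordered, and (ii) each row is increasing, i.e.\ its column-$1$ entry is smaller than its column-$2$ entry. Check (ii) is governed entirely by how the prescribed top-box color of column $2$ is matched to the terminal box of column~$1$ (and, in case (2a), conversely).

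For cases (1b) and (2b) the argument parallels the proof of Lemma~\ref{lemma:tricky left}. Column $2$ is short and carries at most one \red box, placed among its top boxes, while the prescribed rule ties the color of the top box of column $2$ to the lower terminal box of the spiral coloring of column $1$. The two facts to extract are that the bottommost entry of column $1$ in $P$ is inherited from a box lying to the right in $\Tup$ (hence exceeds the entry directly above it, making column $1$ standard), and that the terminal-box matching makes column $2$ standard and forces the top-row inequality. The parity of $\alpha_2$ enters only in determining which color lies on the outside of the column-$1$ spiral, and thus which color occupies the matched terminal box; the same reasoning applies in either parity. All remaining rows are standard because each column-$2$ entry below the top originates from the column immediately to its left in $\Tup$, and is therefore larger than its row-neighbor.

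The main obstacle is case (2a), the rectangular shape $\lambda=(2^k)$, where both columns have full length $k$ and both are color-mixed, so each color's cycle of arrows threads through both columns at once. Here I must establish the row inequalities across the \emph{entire} rectangle, not merely at its top. The point of giving column $2$ the \emph{upside-down} spiral coloring (with \blue on the outside) and then choosing the outside color of the column-$1$ spiral to match the higher terminal box of column $2$ is exactly to interlock the two mixed columns so that, row by row, the entry landing in column $2$ lands above --- and is therefore larger than --- the entry landing in column $1$. I expect to prove this by a short case analysis on the parity of $\alpha_2$ and on the relative heights of the two terminal boxes, verifying the boundary rows (top and bottom) directly and invoking the reversing property of the spiral for the interior rows. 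The exclusion of $\alpha=(5,3)$ is precisely the degenerate instance where this interlocking cannot be achieved, which is why that pair appears in the table of Theorem~\ref{thm:main result}.

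Finally, for case (1a) I would verify the two assertions about $\sigma=[\,n-1,\dots,\tfrac{n}{2}+1,\,1,\,n,\,\tfrac{n}{2},\dots,2\,]$ directly, writing $m=\tfrac{n}{2}$ (odd). For the cycle type I trace the orbit of $1$ using the piecewise description $\sigma_i=2m-i$ for $i\le m-1$, $\sigma_m=1$, $\sigma_{m+1}=2m$, and $\sigma_i=2m-i+2$ for $i\ge m+2$: the orbit of $1$ alternates between small and large \emph{odd} values and closes up only after visiting all $m$ odd numbers in $\{1,\dots,n\}$, and symmetrically the even numbers form the other orbit, so the cycle type is $(m,m)=\alpha$. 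For the shape I apply Greene's theorem~\eqref{eq: desc asc description of cols and rows of RSK shape}: the one-line notation is a decreasing run lying entirely above $m$, then $1$, then $n$, then a decreasing run lying entirely below $m$, so the two runs concatenate into a decreasing subsequence of length $n-2$ (which is maximal), while the longest increasing subsequence has length $2$. Hence $\lambda_1=2$ and $\lambda_1'=n-2$, and a partition of $n$ with two columns and $n-2$ rows must be $(2^2,1^{n-4})=\lambda$, as claimed.
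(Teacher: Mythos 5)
Your overall architecture matches the paper's proof: reduce cases (1b), (2a), (2b) to the standardness of $\sigma \cdot \Tup$, use the spiral-reversing property to dispose of column interiors, and verify case (1a) directly. Your (1a) is complete and, if anything, cleaner than the paper's: the paper checks the shape by running the RS algorithm and obtains the cycle type by translating one-line into cycle notation, whereas your orbit-tracing plus Greene's theorem argument (a decreasing subsequence of length $n-2$ and maximal increasing length $2$, forcing $\lambda = (2^2,1^{n-4})$) is a legitimate and more transparent substitute. Your sketches of (1b) and (2b) also track the paper; one point you leave implicit in (1b) is that when $\alpha_2$ is odd the argument needs the number of \red boxes in column 1 to be \emph{strictly} less than the number of \blue ones, which holds precisely because $\alpha_1 = \alpha_2$ odd has been diverted to case (1a).

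The genuine gap is case (2a), which you correctly identify as the crux but then defer (``I expect to prove this by a short case analysis\ldots''). That deferred analysis is where the content of the lemma lies, and as proposed it omits the two structural facts the paper actually establishes. First, column 1 contains exactly two \red boxes, and for $\lambda_1' \geq 5$ its lower terminal box is \emph{\blue regardless of the outside color}; this is what guarantees that the smaller (\blue) terminal entry of column 1 lands in the top box of column 2 (\blue, by the upside-down spiral with \blue outside) while the larger \red terminal entry lands in the second box from the top (\red), making the top of column 2 standard both column-wise and row-wise. Second, since a two-column shape has no slashed boxes in column 2, the arrows there are the vertical reflection of a spiral, so the upside-down spiral coloring reverses column 2, and matching the outside color of column 1 to the \emph{higher} terminal box of column 2 is exactly what puts the two entries arriving at the bottom of column 1 in increasing order. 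Moreover, your plan of a uniform analysis ``on parity and relative heights'' would break at $\lambda_1' = 4$: there the claim that the lower terminal box of column 1 is \blue independently of the outside color fails (with \blue outside, the lower terminal is \red), which is why the paper splits off $\alpha = (4,4)$, $\lambda = (2^4)$ --- the only surviving case after excluding $(5,3)$ --- and verifies it by direct computation. Your proposal neither isolates these facts nor flags this base case, so as written it does not yet constitute a proof of (2a); everything else is sound.
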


\begin{proof}\

\begin{enumerate}
    \item \begin{enumerate}
        \item By applying the RS algorithm to the given one-line notation of $\sigma$, it is straightforward to verify that $\sh(\sigma) = (2,2,1,\ldots,1) = \lambda$.
        Translating the given one-line notation of $\sigma$ into cycle notation, we have
        \[
        \sigma = \left(1, n-1, 3, n-3, \ldots, \frac{n}{2}+2, \frac{n}{2} \right) \left(2, n-2, 4, n-4, \ldots, \frac{n}{2}-1, \frac{n}{2}+1 \right),
        \]
        so that $\sigma \in \mathcal{C}_{(n/2, \: n/2)} = \mathcal{C}_\alpha$.
        \item Suppose first that $\alpha_2$ is even.
        Then in column 1 of $\Tup$, the number of \red boxes is odd, and is less than or equal to the number of \blue boxes; thus with \red on the outside, the lower terminal box in column 1 is \red.
        Coloring the top box \red in column 2 therefore guarantees that the entries in the two terminal boxes of column 1 (in $\Tup$) appear in reverse order in column 2 (of $\sigma \cdot \Tup$).
        Likewise, the fact that \red is on the bottom of column 1 guarantees that the two entries in column 2 (of $\Tup$) appear in reverse order at the bottom of column 1 (in $\sigma \cdot \Tup$).

        Now suppose that $\alpha_2$ is odd.
        Then in column 1 of $\Tup$, the number of \red boxes is even, and is strictly less than the number of \blue boxes, since otherwise we would be in case (1a); thus with \blue on the outside, the lower terminal box in column 1 is \blue.
        (This would not have been true if $\alpha_1 = \alpha_2$.)
        The rest of the argument is the same as before, upon switching the roles of \blue and \red.
    \end{enumerate} 
    \item  

    \begin{enumerate}
    
    \item First suppose $\lambda'_1 = 4$.
    Since we exclude the case where $\alpha = (5,3)$ and $\lambda = (2^4)$, which appears in the table in Theorem~\ref{thm:main result}, this leaves only the case $\alpha = (4,4)$ with $\lambda = (2^4)$.
    It is straightforward to check that $\sigma \cdot \Tup$ has columns $(1,2,5,6)$ and $(3,4,5,7)$, and thus is standard.

    Now suppose $\lambda'_1 \geq 5$.
    Since $\lambda'_1 = \lambda'_2$, we have $\alpha_2 \leq \lambda'_2$, and hence there are at least two \blue boxes in column 2.
    The lower terminal box in column 1 is always \blue, regardless of the outer color.
    Therefore $\sigma$ sends this entry to the top box in column 2 (which is \blue), and sends the (greater) entry in the terminal \red box to the second box from the top of column 2 (which is red); thus the two top boxes in column 2 of $\sigma \cdot \Tup$ are in standard order.
    Since $\lambda$ has only two columns, the second column has no slashes (in the arrow scheme of Construction~\ref{constr:canonical cycle}), and thus the arrows spiral from the top box to the bottom box, from the next topmost box to the next bottommost box, etc., which is the vertical reflection of a spiral coloring (see Definition~\ref{def:spiral coloring} and Figure~\ref{fig:spiral coloring}).
    Therefore the upside-down spiral coloring reverses the order of the entries in column 2 as desired, and the \emph{higher} terminal box (which has the greater entry) is sent by $\sigma$ to the \emph{lower} of the two bottom boxes in column 1 (which we forced to match its color).
    Thus the bottom two boxes in column 1 have the standard order, and it follows that $\sigma \cdot \Tup$ is standard.

    \item We only need to check the bottom box in column 1 of $\sigma \cdot \Tup$, since the rest of the tableau is automatically standard by following the arrows in Definition~\ref{def: alpha coloring}.
    The bottom box in column 1 is \red, and thus has entry $n$ (if column 2 has \red on top) or $n-1$ (if column 2 has \blue on top, in which case $n$ remains in some \blue box of column 2).
    Thus this bottom entry is the greatest entry in column 1, and (since $\lambda'_1 > \lambda'_2$) is the only entry in its row.
    \qedhere

    \end{enumerate}
    
    \end{enumerate}
    
\end{proof}

\begin{construction}
\label{constr:hooks}
    Here we assume that $\alpha_2 \geq 3$ and that $\lambda$ is a hook (i.e., $\lambda_2 = 1$).
    We exclude the case where $\alpha = (n/2, \: n/2)$ and $\lambda = (n-2, \: 1, \: 1)$, since it occurs in the table in Theorem~\ref{thm:main result}.
    We also exclude the case where $\alpha = (n/2, \: n/2)$ with $n \mid 4$ and $\lambda = (3,1,\ldots,1)$, since it occurs in the table.
    (We also omit the case $\alpha_2 = \lambda'_1 = 3$, which is handled in Construction~\ref{constr:alpha2=colambda1=3}.)
    Take $Q = \Tdown$, and color $\Tup$ as follows:

    \begin{enumerate}
        \item If $\lambda'_1 = 3$, then the \red boxes have entries $2,4,6,\ldots,2\alpha_2$.
    
        \item If $\lambda_1 = 3$, then the coloring depends on the parity of $\alpha_2$.
        If $\alpha_2$ is odd, then color the entry $n$ \red, and for the remaining \red boxes, give column 1 the spiral coloring with \red on the outside.
        If $\alpha_2$ is even, then replace ``\red'' with ``\blue'' in the previous sentence.

        \item If $\lambda_1 \geq 4$ and $\lambda'_1 \geq 4$, then the number of \red boxes in column 1 is
        \[
        \min \Big\{ \lambda'_1 - 2, \;  \alpha_2 - 1 \Big\}.
        \]
        Give column 1 the spiral coloring with \blue on the outside.
        In the remaining tail, beginning with whichever color is the lower terminal box in column 1, alternate colors from left to right until one color is exhausted. 
    \end{enumerate}
See the examples below.
\[
\alpha = (8,7):
\qquad
\begin{ytableau}
    [*(\Blue)]
    *(\Red)15 & 14 & *(\Red)13\\
    12\\
    *(\Red)11\\
    10\\
    *(\Red)9\\
    8\\
    7\\
    6\\
    *(\Red)5\\
    4\\
    *(\Red)3\\
    2\\
    *(\Red)1
\end{ytableau}
\qquad
\begin{ytableau}
    [*(\Blue)]
    7 & *(\Red)8 & 9 & *(\Red)10 & 11 & 12 & 13 & 14 & 15\\
    *(\Red)6\\
    *(\Red)5\\
    *(\Red)4\\
    *(\Red)3\\
    *(\Red)2\\
    1\\
    \none\\
    \none\\
    \none\\
    \none\\
    \none\\
    \none
\end{ytableau}
\qquad
\begin{ytableau}
    [*(\Blue)]
    3 & *(\Red)4 & 5 & *(\Red)6 & 7 & *(\Red)8 & 9 & *(\Red)10 & 11 & *(\Red)12 & 13 &*(\Red)14 & 15\\
    *(\Red)2\\
    1\\
    \none\\
    \none\\
    \none\\
    \none\\
    \none\\
    \none\\
    \none\\
    \none\\
    \none\\
    \none
\end{ytableau}
\]

\end{construction}

\begin{lemma}
    \label{lemma:hook}
    Construction~\ref{constr:hooks} produces an $\alpha$-coloring of $\Tup$.
\end{lemma}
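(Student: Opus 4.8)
The plan is to show, in each of the three cases, that the permutation $\sigma$ associated to the coloring yields a standard tableau $P = \sigma \cdot \Tup$; by Corollary~\ref{corollary:MM column-canonical} this is exactly the condition making the coloring an $\alpha$-coloring. Since $\lambda$ is a hook, every box of $P$ lies in the first column or the first row, so $P$ is standard if and only if its first column increases downward and its first row increases rightward---two nearly independent checks, coupled only at the corner box. For the first column I will use the principle recorded after Definition~\ref{def:spiral coloring}: once a column receives a spiral coloring, the arrows of Construction~\ref{constr:canonical cycle} restricted to each color simply reverse that color's entries, so the entire first column of $P$ is automatically increasing except for its bottommost two boxes, which are fed from the tail; for the column it then remains only to check that those two bottom entries are in order. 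Everything lying outside the first column and the tail is standard by Lemma~\ref{lemma:canonical cycle} applied to each color.

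In case (1), where $\lambda'_1 = 3$, I would compute $P$ outright. The unique red box of the first column is the entry $2$, whose red cycle sends $2 \mapsto 2\alpha_2$, while the blue cycle carries the top box to $1$ and the bottom (slashed) box to $n$; hence the first column of $P$ is $(1,\,2\alpha_2,\,n)$. The exclusion of $\alpha = (\tfrac n2,\tfrac n2)$ with $\lambda=(n-2,1,1)$ is precisely what forces $\alpha_2 < \tfrac n2$, so that $2\alpha_2 < n$ and this column is strictly increasing (the same exclusion also guarantees that $n$ is blue, so it lands at the bottom). Each tail entry $v$ of $\Tup$ is carried to $v-2$ by its cycle, so the first row reads $1,2,\ldots,n-2$ and is increasing.

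In case (2), where $\lambda_1 = 3$, the first column carries a full spiral coloring and the tail has exactly two boxes, so only four entries of $P$ need attention. The parity rule---coloring $n$ with the designated outside color---is what fixes the bottom of the column: the outside color's slashed box is lowest and receives the largest value $n$, while the other slashed box sits just above it and receives $n-1$, finishing the column $\ldots,n-1,n$. The two tail boxes of $P$ are the images of $n-1$ and $n$, each equal to its color's terminal box in the spiral; I would verify from the cycle structure that these emerge in increasing order, which is exactly where the parity choice is needed. This is also where the excluded pair $\alpha=(\tfrac n2,\tfrac n2)$ with $4 \mid n$ is ruled out: there the forced parity prevents the two colors from filling the column and tail consistently, matching the fact that $(3,1,\ldots,1)$ is genuinely unattainable in that case.

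The bulk of the work is case (3), the generic hook with $\lambda_1,\lambda'_1 \geq 4$, and I expect the interface between the first column and the tail to be the main obstacle. I would split on the two branches of $\min\{\lambda'_1-2,\ \alpha_2-1\}$: either all but one red box fits in the column (leaving a single red for the tail), or the column holds exactly $\lambda'_1-2$ reds---hence at least two blues, securing a blue corner and a blue among the bottom boxes---and the surplus reds spill into the tail. In both branches the spiral with blue on the outside makes the column interior increasing, its two bottom boxes receive large tail values that I check to be in order, and the corner receives the minimal entry $1$. The delicate point is that the tail alternation is launched with the color of the \emph{lower} terminal box of the column: each tail box of a given color is sent by its cycle to its same-color left neighbor in the tail (or to that color's terminal box, for the leftmost), so within each color the tail images increase rightward, and starting with the lower terminal makes the very first tail image the smallest, after which the two increasing color-subsequences interleave without breaking monotonicity. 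The leftmost tail image exceeds the corner entry $1$, and the monochromatic run left after one color is exhausted is increasing by Lemma~\ref{lemma:canonical cycle}, so the first row of $P$ is increasing and the proof concludes.
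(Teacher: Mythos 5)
Your framework is the same as the paper's: reduce everything to showing $P=\sigma\cdot\Tup$ is standard, let the spiral-coloring principle (the discussion following Definition~\ref{def:spiral coloring}) handle the interior of the first column, and then inspect the two slashed bottom boxes and the first row in each of the three cases. Your case (1) is essentially complete and correct: the column of $P$ is indeed $(1,\,2\alpha_2,\,n)$, and the exclusion of $\alpha=(\tfrac{n}{2},\tfrac{n}{2})$ is exactly what gives $2\alpha_2<n$ and makes $n$ blue. (Minor slip: the blue tail entries $v>2\alpha_2$ are shifted by one position, not two, so the row of $P$ is $1,\ldots,2\alpha_2-1,2\alpha_2+1,\ldots,n-1$ rather than $1,2,\ldots,n-2$; it is still increasing, which is all that matters.)

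The genuine gap is in cases (2) and (3), where the decisive verifications are left as promissory notes, and in case (3) you never identify what the verification actually amounts to. With blue on the outside, the bottommost box of column 1 (blue, slashed) receives the largest \emph{blue} tail entry, and the box just above it (red, slashed) receives the largest \emph{red} tail entry; these land in increasing order \emph{if and only if the entry $n$ is colored blue}. So the sentence ``its two bottom boxes receive large tail values that I check to be in order'' is precisely the step that can fail, and proving that $n$ is blue is the entire substance of the paper's case (3): splitting on $\min\{\lambda_1'-2,\;\alpha_2-1\}$, either the column absorbs $\lambda_1'-2$ red boxes and one counts (using $\lambda_1'\geq 4$) that the tail contains at least as many blue boxes as red ones, so the alternation ends blue; or else exactly one red box lies in the tail, in position 1 or 2 of a tail of length $\lambda_1-1\geq 3$, so again the last tail box is blue. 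Your plan contains no such counting and never mentions the color of $n$ in case (3), so it cannot be completed as written. The same criticism applies, less severely, to case (2): ``I would verify from the cycle structure that these emerge in increasing order'' is exactly where one must argue that the lower terminal box of the spiral has the color of the \emph{left} tail box --- when $\alpha_2$ is odd the red column count $\alpha_2-1$ is even and at most the blue count, putting red's terminal box high and blue's lower; when $\alpha_2$ is even one needs the strict inequality $\alpha_1>\alpha_2$ (i.e., the exclusion of $(\tfrac{n}{2},\tfrac{n}{2})$ with $4\mid n$) to force red's terminal box below blue's. Your intuition that the exclusion enters through parity is right, but the counting argument that makes it precise is the proof, and it is missing.
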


\begin{proof}\

\begin{enumerate}
    \item Since here we exclude the case $\alpha = (n/2, \: n/2)$, we have $\alpha_1 > \alpha_2$, which guarantees that the entry $n$ in $\Tup$ is colored \blue.
    The given coloring then permutes the entries of each color clockwise around the hook, so that $\sigma \cdot \Tup$ is the hook with column $(1,4,n)$ and row $(1,2,3,5,\ldots,n-1)$.
    \item Suppose $\alpha_2$ is odd.
    Then in column 1 of $\Tup$, the number of \red boxes is even, and is at most the number of \blue boxes; hence with \red on the outside of the spiral coloring, the lower terminal box is \blue.
    Thus the two entries in these terminal boxes appear in increasing order in the last two boxes of row 1 of $\sigma \cdot \Tup$.
    In turn, since \red is at the bottom of column 1, the \red entry $n$ appears at the bottom of column 1 (in $\sigma \cdot \Tup$), while the \blue entry $n-1$ appears just above this.
    Hence $\sigma \cdot \Tup$ is standard.

    Now suppose $\alpha_2$ is even; then we exclude the case $\alpha = (n/2, \: n/2)$ since this would imply that $4 \mid n$ with $\lambda = (3,1,\ldots,1)$, listed in the table in Theorem~\ref{thm:main result}.
    Thus we have $\alpha_1 > \alpha_2$.
    Therefore in column 1 of $\Tup$, the number of \red boxes is odd, and is strictly less than the number of \blue boxes; hence with \blue on the outside of the spiral coloring, the lower terminal box is \red.
    (This would have been false if $\alpha_1 = \alpha_2$.)
    The rest of the argument is identical to the one above.
    
    \item The spiral coloring with \blue on the outside is always possible because there are at least two \blue boxes in column~1, and the horizontal alternation is always possible because there is at least one \red box in the tail.
    It therefore suffices to prove that the entry $n$ in $\Tup$ is colored \blue, since then $n$ will appear at the bottom of column~1 in $\sigma \cdot \Tup$.
    
    If $\lambda'_1 - 2 \leq \alpha_2 - 1$, then the \blue boxes in column 1 are the top and bottom, and so the lower terminal box is \red, which is then the first color in the tail.
    Since in the tail the number of \blue boxes must be greater than or equal to the number of \red boxes, the entry $n$ must be colored \blue.
    On the other hand, if $\lambda'_1 > \alpha_2 - 1$, then there is exactly one \red box in the tail; since we assume $\lambda_1 \geq 4$, this tail has at least three boxes, of which the \red box is either first or second.
    Therefore the entry $n$ must be colored \blue. \qedhere
        
\end{enumerate}
    
\end{proof}

\begin{construction}
\label{constr:2}
    Here we assume that $\alpha_2 = 2$.
    We exclude $\lambda  = (2,1,1)$, $\lambda = (2,2,2)$, and $\lambda = (3,1)$, since these occur in the table in Theorem~\ref{thm:main result}.

    \begin{enumerate}
        \item If $\lambda'_1 \neq 3$, then let $Q = \Tdown$.
        The \red boxes in $\Tup$ have entries $\lceil \lambda'_1/2 \rceil$ and $\lceil \lambda'_1/2 \rceil + 1$.
        
        \item If $\lambda'_1 = 3$, then we have the following cases: 
        \begin{enumerate}
            \item If $\lambda'_2 = 1$, then let $Q = \Tdown$.
            The \red boxes in $\Tup$ have entries 2 and 4.
            \item If $\lambda'_2 \geq 2$, then let $Q$ be the tableau obtained from $\Tdown$ by transposing the entries 3 and 4.
            The \red boxes in $Q^\uparrow$ have entries 1 and 4.
    \end{enumerate}
    \end{enumerate}
See the examples below.
\[
\alpha = (5,2):
\qquad
\begin{ytableau}
    [*(\Blue)]
    4 & 6 & 7\\
    *(\Red)3 & 5\\
    *(\Red)2\\
    1
\end{ytableau}
\qquad
\begin{ytableau}
    [*(\Blue)]
    3 & *(\Red)4 & 5 & 6 & 7\\
    *(\Red)2\\
    1\\
    \none
\end{ytableau}
\qquad
\begin{ytableau}
    [*(\Blue)]
    *(\Red)4 & 5 & 7\\
    2 & 3 & 6\\
    *(\Red)1\\
    \none
\end{ytableau}
\qquad
\begin{ytableau}
    [*(\Blue)]
    *(\Red)4 & 6 & 7\\
    2 & 5\\
    *(\Red)1 & 3\\
    \none
\end{ytableau}
\]
    
\end{construction}

\begin{lemma}
    \label{lemma:2}
    Construction~\ref{constr:2} produces an $\alpha$-coloring of $Q^\uparrow$.
\end{lemma}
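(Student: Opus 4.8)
The plan is to verify, in each of the three cases, that the tableau $P = \sigma \cdot Q^\uparrow$ determined by the coloring is standard; by Lemma~\ref{lemma:assoc perm} this is exactly what must be shown. Since $\alpha_2 = 2$, the \red color forms a single $2$-cycle, so $\sigma$ agrees with the canonical cycle of Construction~\ref{constr:canonical cycle} on the \blue subdiagram and merely transposes the two \red entries. Consequently, Lemma~\ref{lemma:canonical cycle}, applied to the \blue subdiagram (after closing up any gap in column~$1$ by gravity), already shows that the \blue entries of $P$ increase along rows and down columns among themselves; the whole task therefore reduces to checking standardness in the at most two boxes where \red meets \blue.

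In the case $\lambda'_1 \neq 3$, the two \red boxes are the middle pair of column~$1$, carrying the consecutive values $m \coloneqq \lceil \lambda'_1/2\rceil$ and $m+1$. The \red transposition places $m$ directly above $m+1$, so this pair is internally increasing; what remains is to show that the \blue box immediately above receives a value $< m$ and the \blue box immediately below receives a value $> m+1$ (the row comparisons being handled as usual by the spiral structure, since the neighboring column~$2$ entries are larger). I would prove this by tracking how the canonical spiral distributes the smallest \blue values: the \blue boxes above the \red pair occupy the top of column~$1$ and receive the values $1, 2, \ldots, \lfloor \lambda'_1/2\rfloor - 1$, all strictly below $m$, while the remaining small \blue values (those strictly between this top block and $m$) are carried off into column~$2$, leaving the \blue boxes below the \red pair with values $\geq m+2$. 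This distributional claim is the crux of the entire lemma, and the main obstacle will be making it precise for both parities of $\lambda'_1$ and for every admissible length $\lambda'_2$ of the second column.

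The cases $\lambda'_1 = 3$ are small and can be settled by directly describing $P$. When $\lambda'_2 = 1$ the shape is the hook $(\lambda_1, 1, 1)$, and the transposition $2 \leftrightarrow 4$ sends column~$1$ to $(1,4,n)$ and the first row to $(1,2,3,5,\ldots,n-1)$; since excluding $\lambda = (2,1,1)$ forces $n \geq 5$, we have $4 < n$, so this hook is standard and the remaining \blue boxes follow Construction~\ref{constr:canonical cycle}. When $\lambda'_2 \geq 2$, the point of transposing the entries $3$ and $4$ in $Q$ is to keep $Q$ admissible while placing the \red entries $1$ and $4$ at the top and bottom of the length-$3$ first column of $Q^\uparrow$; after the \red swap, column~$1$ reads $1$, then a \blue value in $\{2,3\}$, then $4$ from top to bottom, and I would only need to check that this bottom entry $4$ is smaller than the bottom entry of column~$2$, after which Lemma~\ref{lemma:canonical cycle} completes the argument.
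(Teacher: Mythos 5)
Your overall route coincides with the paper's: reduce everything to the standardness of $P = \sigma \cdot Q^\uparrow$, handle $\lambda'_1 \geq 4$ via the middle transposition together with the spiral's reversal of the surrounding column-$1$ entries, and settle the two $\lambda'_1 = 3$ subcases by writing $P$ down explicitly (your descriptions of $P$ there match the paper's: column $(1,4,n)$ with row $(1,2,3,5,\ldots,n-1)$ in the hook case, and first column $(1,3,4)$ in the transposed-$Q$ case, where the middle value is in fact always $3$, not merely ``in $\{2,3\}$''). Your distributional claim in case (1) is correct for both parities: the top block receives $1, \ldots, \lfloor \lambda'_1/2\rfloor - 1$, and the boxes below the red pair receive values $\geq m+2$ together with one large entry at the slashed bottom box. (One small imprecision: when $\lambda'_1$ is even, the value carried into column~$2$ is $m+2$, not a value between the top block and $m$; only for odd $\lambda'_1$ is it $m-1$. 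The needed inequalities hold either way.) Since the paper's own proof asserts the analogous ``entries above and below the red boxes switch sides'' statement with no more detail than you provide, your deferral of that bookkeeping is not a gap relative to the paper.

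There is, however, one genuine omission: case (1) of Construction~\ref{constr:2} is $\lambda'_1 \neq 3$, which includes $\lambda'_1 = 2$, and there your argument does not apply. When $\lambda'_1 = 2$ we have $m = \lceil 2/2 \rceil = 1$, so the red entries $1$ and $2$ constitute the \emph{entire} first column: there is no ``middle pair,'' and the blue boxes ``immediately above'' and ``immediately below'' on which your reduction rests do not exist. This case does occur (e.g.\ $\alpha = (4,2)$, $\lambda = (4,2)$), so it must be addressed. The repair is short and is exactly what the paper does: the red transposition places $(1,2)$ in column~$1$ of $P$; the blue remainder is $\lambda$ minus its first column, and Theorem~\ref{thm:box} forces $\lambda_2 \geq 2$ (since $\lambda_1 \leq n-2$), i.e.\ $\lambda'_2 = 2$, so Lemma~\ref{lemma:canonical cycle} applies to the blue subdiagram, whose entries all exceed $2$; hence $P$ is standard. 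As written, your ``middle pair'' analysis silently assumes $\lambda'_1 \geq 4$, so you should add this subcase explicitly.
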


\begin{proof}\

    \begin{enumerate}
        \item 
        If $\lambda'_1 = 2$, then the given coloring produces the first column $(1,2)$ in $\sigma \cdot \Tup$, while the remaining entries (since Theorem~\ref{thm:box} forces $\lambda'_2 = 2$) are permuted according to Construction~\ref{constr:canonical cycle}, yielding a standard tableau.
        If $\lambda'_1 \geq 4$, then with the given coloring, $\sigma$ transposes the two \red entries in the middle of column 1, while reversing the surrounding entries (excluding the entry which it sends to column 2) in such a way that the entries above and below the \red boxes switch sides with each other.
        This yields a standard first column in $\sigma \cdot \Tup$ (since the bottom entry comes from some column to the right in $\Tup$).
        The rest of the tableau $\sigma \cdot \Tup$ is standard by Lemma~\ref{lemma:canonical cycle}.

        \item \begin{enumerate}
            \item This coloring transposes the entries $2$ and $4$.
            The assumption $\lambda \neq (2,1,1)$ guarantees that $n \geq 5$; thus in $\sigma \cdot \Tup$ the first column is $(1,4,n)$, while the first row is $(1,2,3,5,6,\ldots,n-1)$, yielding a standard (hook-shaped) tableau.

            \item If $m \geq 3$ then the second column of $Q^\uparrow$ is either $(5,3)$ or $(6,5,3)$ where the 3 is slashed; on the other hand, if $m=2$, then the assumption $\lambda \neq (2,2,2)$ forces the second column of $Q^\uparrow$ to be $(5,3)$ where the 3 is not slashed.
            In either case, $\sigma$ takes the entry 3 into the box with entry 2; therefore in $\sigma \cdot Q^\uparrow$, the first column is $(1,3,4)$.
            The second column has the entry 2 on top, and the rest of the tableau is permuted as in Construction~\ref{constr:canonical cycle} and is therefore standard. \qedhere
        \end{enumerate}
    \end{enumerate}
\end{proof}
    
\begin{construction}
\label{constr:1}
    Here we assume that $\alpha_2 = 1$.
    We exclude $\lambda = (n/2, \: n/2)$, since it occurs in the table in Theorem~\ref{thm:main result}.
    Let $Q = \Tdown$, and color $\Tup$ as follows:

    \begin{enumerate}
        \item If $\lambda'_1 \leq 2$, then the \red box has the entry $n$.
        \item If $\lambda'_1 \geq 3$, then the \red box has the entry $\lceil (\lambda'_1 + 1) / 2 \rceil$.
        \end{enumerate}
See the examples below.
\[
\alpha = (6,1):
\quad
\begin{ytableau}
    [*(\Blue)]
    2 & 4 & 5 & 6 & *(\Red)7\\
    1 & 3\\
    \none\\
    \none
\end{ytableau}
\qquad
\begin{ytableau}
    [*(\Blue)]
    4 & 6 & 7\\
    *(\Red)3 & 5\\
    2\\
    1
\end{ytableau}
\]

\end{construction}

\begin{lemma}
    \label{lemma:1}
    Construction~\ref{constr:1} produces an $\alpha$-coloring of $\Tup$.
\end{lemma}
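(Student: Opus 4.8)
The plan is to invoke Corollary~\ref{corollary:MM column-canonical}: since $Q = \Tdown$ is admissible, it suffices to prove that $P = \sigma \cdot \Tup$ is standard, where $\sigma$ is the associated permutation of the coloring. The cycle type comes for free: the coloring uses one \red box (which Construction~\ref{constr:canonical cycle} leaves as a fixed point of $\sigma$) and $n-1$ \blue boxes (which it threads into a single $(n-1)$-cycle), so $\sigma \in \mathcal{C}_{(n-1,1)} = \mathcal{C}_\alpha$ automatically. Everything thus reduces to checking the standardness of $P$, which I would do case by case.

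For case (1), I would first note that $\mathcal{B}_\alpha$ forbids the one-row shape $(n)$ and the table forbids $\left(\frac{n}{2}, \frac{n}{2}\right)$, so necessarily $\lambda'_1 = 2$ and $\lambda_1 > \lambda_2$; hence the \red entry $n$ is the lone box of the last column, at the end of the tail. I would then delete it and observe that the \blue boxes form the subshape $(\lambda_1-1, \lambda_2) \in \mathcal{B}_{(n-1)}$, carrying the entries $1, \ldots, n-1$ exactly as in the analogue of $\Tup$ for that subshape; Construction~\ref{constr:canonical cycle} restricted to the \blue boxes is then literally the canonical cycle of the subshape, so Lemma~\ref{lemma:canonical cycle} makes the \blue boxes standard. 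Reinserting the fixed maximal entry $n$ at the end of row~$1$ (and as the unique box of the last column) keeps rows and columns increasing, finishing this case.

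For case (2) I would set $m = \lceil(\lambda'_1+1)/2\rceil$, the \red entry, located in column~$1$ at height $i^* = \lambda'_1 - m + 1$ from the top, and note $2 \le m \le \lambda'_1 - 1$, so the \red box is interior to column~$1$; also $\mathcal{B}_\alpha$ forbids $(1^n)$, so there is a column~$2$. The columns past the first are entirely \blue, so the argument of Lemma~\ref{lemma:canonical cycle} applies to them verbatim, the only change being that the top box of column~$2$ now inherits a column-$1$ value (at most $\lambda'_1$), still smaller than every other entry of columns $\ge 2$ (all of which exceed $\lambda'_1$); hence those columns stay standard and row~$1$ increases. The remaining task is column~$1$: tracing the \blue spiral, which simply steps over the \red box, I expect to find that column~$1$ of $P$ reads $1, 2, \ldots, i^*-1, m, \ldots$, with the entries above the \red box being exactly $1, \ldots, i^*-1$ and those below it increasing and all larger than $m$. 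The single inequality securing standardness at the \red box is $m > i^*-1 = \lambda'_1 - m$, i.e.\ $2m > \lambda'_1$, which is guaranteed precisely by the ceiling in $m = \lceil(\lambda'_1+1)/2\rceil$.

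The main obstacle is exactly this last column-$1$ computation in case (2): showing that excising the interior \red box from the spiral still leaves the first column increasing, and that the median-type value $m = \lceil(\lambda'_1+1)/2\rceil$ is forced to be the one that slots between the \red box's \blue neighbors. Everything else is either automatic (the cycle type), a clean reduction to Lemma~\ref{lemma:canonical cycle} (all of case (1), and the non-first columns of case (2)), or a one-line inequality.
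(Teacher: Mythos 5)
Your overall strategy is the same as the paper's: reduce everything to the standardness of $P=\sigma\cdot\Tup$ via Corollary~\ref{corollary:MM column-canonical}, handle case (1) by viewing the blue boxes as the canonical cycle of the smaller two-row shape and reinserting the fixed entry $n$, and handle case (2) by analyzing the blue spiral of column~1 around the fixed red entry; your treatment of case (1) and of the columns to the right of column~1 in case (2) is sound. The gap is in your analysis of column~1 in case (2). The structure you ``expect to find'' --- entries $1,\ldots,i^*-1$ above the red box, and entries below it all larger than $m$ --- is \emph{not} a generic consequence of excising an interior box from the spiral; it holds only because $m$ is the median-type value, and your proposed criterion (``the single inequality $2m>\lambda'_1$'') does not capture this. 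Concretely, take $\lambda'_1=6$ and $m=5$, so $i^*=2$: then $2m>\lambda'_1$ and $m>i^*-1$ both hold, yet tracing the blue spiral (blue boxes at positions $1,3,4,5,6$, with position $6$ slashed) gives column~1 of $P$ equal to $1,5,2,4,6,\ast$ from top to bottom, which is not standard --- the entries below the red box are $2,4,6,\ast$, increasing but not all larger than $m$. What the ceiling actually buys is that the numbers of blue boxes strictly above and strictly below the red box (namely $\lambda'_1-m$ and $m-2$, the latter excluding the slashed box) differ by at most one, i.e.\ $\lambda'_1<2m\leq\lambda'_1+2$; this is exactly what makes the interrupted top/bottom alternation of the spiral come out even and produce the clean pattern $1,2,\ldots,i^*-1,\,m,\,i^*+2,\ldots,\lambda'_1,\ast$. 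So the two-sided nature of the ceiling is essential, and it enters in the verification of your claimed structure, not in the one inequality you isolate; the paper's proof encodes this by placing the red box adjacent to the terminal box of the blue spiral, so that the reversal of the blue entries slots the fixed entry correctly on both sides.

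A second, minor point: in case (1) you assert that $\mathcal{B}_\alpha$ forbids the one-row shape, hence $\lambda'_1=2$. This fails in the degenerate case $n=2$, where $\alpha=(1,1)$ and $\mathcal{B}_\alpha=\{(2)\}$, so $\lambda'_1=1$; the construction's case (1) covers all $\lambda'_1\leq 2$, and the paper disposes of $\lambda'_1=1$ separately (there $\sigma$ is the identity and $\Tup$ is already standard). This is trivially patched, but as written your reduction does not cover it.
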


\begin{proof}\

    \begin{enumerate}
        \item If $\lambda'_1 = 1$, then we must have $\alpha_1 = \alpha_2 = 1$ and $\lambda = (2)$, since otherwise $\mathcal{B}_\alpha$ contains no one-row shapes.
        Thus $\Tup = \ytableaushort{12}$ is already standard.
        The given coloring (i.e., the entry 1 is \blue and 2 is \red) means that $\sigma$ fixes both entries,  yielding a standard tableau.

        If $\lambda'_1 = 2$, then the coloring fixes the entry $n$ at the end of the first row, while the other entries are permuted according to Construction~\ref{constr:canonical cycle} and thus (by Lemma~\ref{lemma:canonical cycle}) form a standard subtableau.
        The entire tableau $\sigma \cdot \Tup$ is also standard, since the assumption $\lambda' \neq (n/2, \: n/2)$ guarantees that $n$ is the only entry in its column.

        \item Let $\lambda'_1 \geq 3$, and consider the arrows in Construction~\ref{constr:canonical cycle} restricted to the \blue boxes in column 1.
        The terminal box in this spiral of arrows is either the middle box (if the number of \blue boxes is odd, in which we place the \red box just below) or the box above the middle (if the number of \blue boxes is even, in which we place the \red box just above).
        In either case, by our placement of the \red box,  $\sigma$ fixes the \red entry and reverses the order of the \blue entries (excluding the entry in the terminal box): therefore column 1 is standard in $\sigma \cdot \Tup$.
        The rest of the tableau is permuted as in Construction~\ref{constr:canonical cycle}, and thus by Lemma~\ref{lemma:canonical cycle} the entire tableau $\sigma \cdot \Tup$ is standard. \qedhere
        \end{enumerate}
    \end{proof}

\section{Unattainable shapes}
\label{sec: unattainable}

Although the existence of an $\alpha$-coloring of $Q^\uparrow$ for some admissible $Q \in \SYT(\lambda)$ guarantees that $\lambda \in \mathcal{S}_\alpha$ (Lemma~\ref{lemma:assoc perm}), the converse is not true; see case (1) in Construction~\ref{constr:2-column shapes}.
Therefore, in order to prove that a given $\lambda \in \mathcal{B}_\alpha$ is not an element of $\mathcal{S}_\alpha$, we will argue from general principles that no $\sigma \in \mathcal{C}_\alpha$ can have $\lambda$ as its RS shape.
Note that the cases listed in the following lemma are precisely those given in the table in Theorem~\ref{thm:main result}.

\begin{lemma}
    \label{lemma:unattainable}
    For each of the following pairs $(\alpha,\lambda)$, we have $\lambda \notin \mathcal{S}_\alpha$:
    \begin{enumerate}
    \item $\alpha = (n-1, \: 1)$ and $\lambda = \left( \frac{n}{2}, \frac{n}{2} \right)$, where $2 \mid n$.
    \item $\alpha = \left( \frac{n}{2}, \frac{n}{2} \right)$ and $\lambda = (n-2, \: 1, 1)$, where $2 \mid n$.
    \item $\alpha = \left( \frac{n}{2}, \frac{n}{2} \right)$ and $\lambda = (3, 1, \ldots, 1)$, where $4 \mid n$.
    \item 
    \begin{enumerate} 
    \item $\alpha = (4,2)$ and $\lambda = (2,2,2)$.
    \item $\alpha = (5,3)$ and $\lambda = (2,2,2,2)$.
    \end{enumerate}
    \end{enumerate}
\end{lemma}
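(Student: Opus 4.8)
The plan is to treat each pair $(\alpha,\lambda)$ separately, in every case using Greene's theorem~\eqref{eq: desc asc description of cols and rows of RSK shape} to convert the shape $\lambda$ into rigid constraints on the longest increasing and decreasing subsequences of a hypothetical $\sigma\in\mathcal{C}_\alpha$, and then deriving a contradiction from the cycle structure. The recurring mechanism is that when $\lambda$ is extremal (very long first row, or very long first column, or a narrow rectangle), Greene's theorem forces each individual cycle of $\sigma$ to realize an extremal shape of its own, via the single-cycle classification in part~(1) of Theorem~\ref{thm: original paper main result}; the two cycles then have too little freedom to interleave into $\lambda$.

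For case~(1) I would argue directly. Since $\lambda'_1=2$, the permutation $\sigma$ is $321$-avoiding, and its unique fixed point $j$ (the $1$-cycle in $\alpha=(n-1,1)$) cannot have both a larger value to its left and a smaller value to its right, as that would produce a $321$-pattern through $\sigma_j=j$. Hence either $\{1,\dots,j\}$ or $\{j,\dots,n\}$ is $\sigma$-invariant; since the remaining $n-1$ points form a single cycle, which admits no proper nonempty invariant subset, this forces $j\in\{1,n\}$. If $j=n$ then $\sigma=[\sigma',n]$, and inserting the maximal value last merely appends a box to the first row, so $\sh(\sigma)=\sh(\sigma')$ with one box added in row~$1$; if $j=1$, prepending the minimal value likewise adds a box to row~$1$. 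Either way $\lambda=(\tfrac n2,\tfrac n2)$ would require the size-$(n-1)$ shape $(\tfrac n2-1,\tfrac n2)$, which is not a partition, giving the contradiction. (The degenerate small case $n=2$ is immediate.)

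For cases~(2) and~(3) I would first use Greene's theorem to pin down the shape of each $(\tfrac n2)$-cycle. In case~(2), the identity $\lambda_1=n-2=2(\tfrac n2-1)$ forces each cycle to attain the maximal number of columns, hence shape $(\tfrac n2-1,1)$, so each cycle carries one increasing run of length $\tfrac n2-1$ together with a single ``defect'' element; realizing $\lambda_1=n-2$ then requires the two runs to concatenate, forcing an essentially block-diagonal (direct-sum) configuration. The heart of the argument is to show that the two defect elements alone cannot raise the longest decreasing subsequence to $\lambda'_1=3$ without either shortening the length-$(n-2)$ increasing subsequence or breaking one of the blocks into a shape other than a single $(\tfrac n2)$-cycle. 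Case~(3) is dual: $\lambda'_1=n-2$ forces each cycle to have shape $(2,1^{\,n/2-2})$, the two long \emph{decreasing} runs must concatenate into a near-skew-sum, and one checks that such a configuration is compatible with each support being a full $(\tfrac n2)$-cycle exactly when $\tfrac n2$ is odd; when $\tfrac n2$ is even, i.e.\ when $4\mid n$, the forced arrows fail to close up into two cycles of the right lengths. This interleaving-plus-parity analysis in~(2) and~(3) is where I expect the main obstacle to lie, since — unlike case~(1) — there is no fixed point to localize, and the contradiction must be extracted from the global incompatibility between a long monotone subsequence and the two-equal-cycles structure, with the divisibility hypothesis entering only through a parity count of how the arrows of the associated configuration link into cycles.

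Finally, for the sporadic pairs in~(4), where $n=6$ and $n=8$, the same Greene-theoretic reduction pins down the shape of each cycle: $\lambda_1=2$ forces the larger cycle to be $123$-avoiding (so of shape $(2,2)$ or $(2,1,1)$, respectively its analogue for~(4b)) and the smaller cycle to contribute a single descent. The very few remaining configurations can then be eliminated directly, either by a short enumeration or by a parity argument on the two equal decreasing columns of the rectangle $\lambda=(2^k)$, matched against the parity of the parts of $\alpha$. Assembling the five cases completes the proof that each listed $\lambda$ lies outside $\mathcal{S}_\alpha$.
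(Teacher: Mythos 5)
Your case (1) is correct and takes a genuinely different route from the paper: you localize the fixed point to $1$ or $n$ via $321$-avoidance and invariance of an initial or final segment, then observe that deleting a first-row box from $\left(\frac{n}{2},\frac{n}{2}\right)$ leaves a non-partition. The paper instead splits off the fixed point $u$, notes the cycle's one-line word sits below $u$ then above $u$, and derives that the $(n-1)$-cycle would have longest increasing subsequence $m-1$, contradicting (via Greene's theorem~\eqref{eq: desc asc description of cols and rows of RSK shape} and $\lambda'_1=2$) that it is a union of two increasing subsequences of total length $2m-1$. Your plan for case (4) — Greene-theoretic reduction followed by a finite check — is also acceptable; the paper itself settles (4) by explicit computation of $\mathcal{S}_{(4,2)}$ and $\mathcal{S}_{(5,3)}$ with details omitted.

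The genuine gap is in cases (2) and (3), which are the heart of the lemma. You set up exactly the right reduction (writing $m=n/2$, each $m$-cycle restriction must have shape $(m-1,1)$, resp.\ $(2,1^{m-2})$), but the contradiction is never derived: you explicitly defer it (``the heart of the argument is to show\dots'', ``where I expect the main obstacle to lie''), and the structural claims offered in its place are unsubstantiated and partly misleading. In (2), the configuration is \emph{not} essentially block-diagonal: a true direct sum of the two cycles would have longest descending subsequence $2$, not $\lambda'_1=3$, so the two increasing runs must interleave inside one increasing subsequence of length $n-2$, with the two defect elements forming the descending pair across supports. What is missing — and what the paper supplies — is first a classification of which fixed-point-free $m$-cycles realize the extremal shape (in case (2), only $\tau=(t_1,\ldots,t_m)$ and its inverse), and then a cascade argument: assuming WLOG $t_m<u_m$, the descending pair forces $t_1>u_1$, and since $\sigma_{t_i}=t_{i+1}$, $\sigma_{u_i}=u_{i+1}$ and both runs lie in a common increasing subsequence, the inequality propagates to $t_i>u_i$ for all $i$, terminating in $t_m>u_m$, a contradiction. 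In (3) you assert that when $4\mid n$ ``the forced arrows fail to close up into two cycles of the right lengths,'' but no argument is given, and this is not how the parity actually enters: the relevant cycle structures \emph{do} close up — the paper exhibits two explicit double-spiral types (and their inverses) whose description uses $\ell=m/2$, hence $m$ even — and the contradiction is again an inequality cascade between the interleaved $t_i$'s and $u_i$'s, not a failure of cycle closure. Without the classification of admissible cycle structures and the cascade (or a worked-out substitute), cases (2) and (3) remain unproved, so the proposal as written does not establish the lemma.
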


\begin{proof}\

    \begin{enumerate}
        \item Although this is a special case of Lemma~5.5 in~\cite{RS-complete-Rubinstein}, we provide a direct proof here for the sake of completeness.
        Set $m \coloneqq n/2$.
        By way of contradiction, suppose there exists $\sigma \in \mathcal{C}_{(n-1,1)}$ such that $\sh(\sigma) = \lambda = (m,m)$.
        The Young diagram of $\lambda$ is the following:
        \begin{center}
        \begin{tikzpicture}[scale=.3]
    
            \draw (0,0) grid (8,-2);
                    
            \draw [decoration={brace, mirror}, decorate] (-0.2,0) -- (-0.2,-2) node[midway, left=5pt] {$2$};
            \draw [decoration={brace},decorate] (0,.2) -- node[above=5pt] {$m$} (8,.2);    
        \end{tikzpicture}
        \end{center}
        
        By Greene's theorem \eqref{eq: desc asc description of cols and rows of RSK shape}, the one-line notation of $\sigma$ has a maximal ascending subsequence of length $m$, and maximal descending subsequence of length $2$.
        Let $u$ be the fixed point of $\sigma$ (i.e., $\sigma_u = u$), and write write $\sigma = \tau \upsilon$ as a product of commuting cycles, where $\tau$ has length $2m-1$ and $\upsilon = (u)$ is the fixed point. 
        Let $\sigma|_\tau = [\sigma_1, \ldots, \sigma_{u-1}, \sigma_{u+1}, \ldots, \sigma_n]$ and $\sigma|_\upsilon=[u]$ denote the one-line notation of $\sigma$ restricted to those elements in $\tau$ and $\upsilon$, respectively.

        Note that the first $u-1$ many elements of $\sigma|_\tau$ must all be less than $u$, and the remaining elements must all be greater than $u$ (otherwise $\sigma$ would contain a descending subsequence of length $3$).
        Therefore, the longest ascending subsequence of $\sigma$ (which we know has length $m$) is one element longer than the longest ascending subsequence of $\sigma|_\tau$.
        But this implies that the longest ascending subsequence of $\sigma|_\tau$ has length $m-1$, which is less than half the length of $\sigma|_\tau$.
        This is a contradiction, since Greene's theorem~\eqref{eq: desc asc description of cols and rows of RSK shape} guarantees that $\sigma|_\tau$ can be written as the disjoint union of two ascending subsequences.

        \item Set $m \coloneqq n/2$.
        By way of contradiction, suppose there exists $\sigma \in \mathcal{C}_{(m,m)}$ such that $\sh(\sigma) = \lambda = (n-2,1,1)$.
        The Young diagram of $\lambda$ is the following:
        \begin{center}
       \begin{tikzpicture}[scale=.3]
    
    \draw (0,0) grid (8,-1);
    \draw (0,0) grid (1,-3);

    \draw [decoration={brace, mirror}, decorate] (-0.2,0) -- (-0.2,-3) node[midway, left=5pt] {$3$};
    \draw [decoration={brace},decorate] (0,.2) -- node[above=5pt] {$2(m-1)$} (8,.2);    
\end{tikzpicture}
\end{center}
        
        By Greene's theorem \eqref{eq: desc asc description of cols and rows of RSK shape}, the one-line notation of $\sigma$ has a maximal ascending subsequence of length $2(m-1)$.
        Now write $\sigma = \tau \upsilon$ as a product of commuting cycles (using the notation from part (1) above). 
        By Theorem~\ref{thm:box}, both $\sigma|_\tau$ and $\sigma|_\upsilon$ have a maximal ascending subsequence of length at most $m-1$; but in fact this length is exactly $m-1$, since we established that $\sigma$ has an ascending subsequence of length $2(m-1)$.
        Thus the union of these two ascending subsequences of length $m-1$ (one from $\sigma|_\tau$, one from $\sigma|_\upsilon$) must form a maximal ascending subsequence in $\sigma$ of length $2(m-1)$, while the remaining two elements of $\sigma$ form a descending subsequence.

        Let $t_1 < \cdots < t_m$ and $u_1 < \cdots < u_m$ be the ordered lists of the elements in $\tau$ and $\upsilon$, respectively.
        Since the longest ascending subsequence of $\sigma|_\tau$ has length $m-1$, and since $\tau$ has no fixed points (being a cycle), it is straightforward to verify that either
        \begin{enumerate}
            \item[I.]  $\tau = (t_1, \ldots, t_m)$ and thus $\sigma|_\tau = [t_2, t_3, \ldots, t_m, t_1]$, or
            \item[II.] $\tau = (t_m, \ldots, t_1)$ and thus $\sigma|_\tau = [t_m, t_1, t_2, \ldots, t_{m-1}]$.
        \end{enumerate}
        The same is true of $\upsilon$, which leaves four (essentially identical) cases to examine; we give details here only for the case where both $\tau$ and $\upsilon$ are of type I.
    
    By the first paragraph, $\sigma$ has an ascending subsequence consisting of the (re-ordered) elements $t_2,\ldots,t_m, u_2,\ldots,u_m$, and a descending subsequence consisting of the (re-ordered) elements $t_1,u_1$.
    Without loss of generality, assume that $t_m < u_m$ (otherwise interchange the role of $\tau$ and $\upsilon$); then since $t_1$ and $u_1$ form a decreasing subsequence in $\sigma$, and since $\sigma_{t_m} = t_1$ and $\sigma_{u_m} = u_1$, we must have $t_1 > u_1$.
    Thus since $\sigma_{t_1} = t_2$ and $\sigma_{u_1} = u_2$, we must also have $t_2 > u_2$.
    This same argument cascades all the way up until it forces $t_m > u_m$, which is a contradiction.
    
    \item The argument is similar to that in part (2), upon switching the roles of ``ascending'' and ``descending.''
    Set $m \coloneqq n/2$, and set $\ell \coloneqq n/4 = m/2$ (since we assume $4 \mid n$).
    By way of contradiction, suppose there exists $\sigma \in \mathcal{C}_{(m,m)}$ such that $\sh(\sigma) = \lambda = (3,1,\ldots,1)$.   
    The Young diagram of $\lambda$ is the following:
        \begin{center}
       \begin{tikzpicture}[scale=.3]
    
    \draw (0,0) grid (3,-1);
    \draw (0,0) grid (1,-5);

    \draw [decoration={brace, mirror}, decorate] (-0.2,0) -- (-0.2,-5) node[midway, left=5pt] {$2(m-1)$};
    \draw [decoration={brace},decorate] (0,.2) -- node[above=5pt] {$3$} (3,.2);    
\end{tikzpicture}
\end{center}
    In the notation of part (2), Greene's theorem implies that both $\sigma|_\tau$ and $\sigma|_\upsilon$ have a maximal descending subsequence of length $m-1$, and the union of these subsequences is a maximal descending subsequence in $\sigma$ of length $2(m-1)$.
    Moreover, the remaining two elements form an ascending subsequence of $\sigma$.

    Since $\tau$ has a maximal descending subsequence of length $m-1$ and no fixed points, it is straightforward to verify that either
    \begin{enumerate}
    \item[I.] \quad $\begin{aligned}[t]
    \tau &= (t_1, t_m, t_2, t_{m-1}, t_3, t_{m-2}, \ldots, t_{\ell}, t_{\ell+1}) \text{  and thus} \\
    \sigma|_\tau &= [t_m, t_{m-1}, \ldots, t_{\ell+1}, t_1, t_\ell, t_{\ell-1}, \ldots, t_2], \text{  or}
    \end{aligned}$

    \item[II.] \quad $\begin{aligned}[t]
    \tau &= (t_1, t_m, t_\ell, t_{\ell+1}, t_{\ell-1}, t_{\ell+2}, t_{\ell-2}, t_{\ell+3}, \ldots, t_2, t_{m-1}) \text{  and thus} \\
    \sigma|_\tau &= [t_m, t_{m-1}, \ldots, t_{\ell+1}, t_{\ell-1}, t_{\ell-2}, \ldots, t_1, t_\ell],
    \end{aligned}$
    \end{enumerate}
    or else $\tau$ is the inverse of either type I or II.
    The same is true of $\upsilon$.
    Again, since all resulting cases are similar in essentials, we give details here for the case where both $\tau$ and $\upsilon$ are of type I.

    Since $\sigma_{t_{\ell+1}} = t_1$ and $\sigma_{u_{\ell+1}} = u_1$ are not part of the maximal descending subsequence in $\sigma|_\tau$ and $\sigma|_\upsilon$, respectively, the elements $\sigma_{t_{\ell+1}}$ and $\sigma_{u_{\ell+1}}$ form an ascending subsequence in $\sigma$, while the remaining elements form a maximal descending subsequence.
    Without loss of generality, assume that $t_{\ell+1} < u_{\ell+1}$, which forces $t_1 < u_1$.
    In turn, this forces the following cascade of inequalities (apparent from inspecting the cycle notation of $\tau$ and $\upsilon$ in type I):
    \[
    t_m > u_m \Longrightarrow t_2 < u_2 \Longrightarrow t_{m-1} > u_{m-1} \Longrightarrow \cdots \Longrightarrow t_\ell < u_\ell \Longrightarrow t_{\ell+1} > u_{\ell+1},
    \]
    which is a contradiction.
    
    \item Though a proof by contradiction is possible, its page length and lack of generalizing insight motivate us to simply prove this result by explicitly calculating $\mathcal{S}_{(4,2)}$ and $\mathcal{S}_{(5,3)}$ and observing that these shapes $\lambda$ do not appear. 
    The details are omitted. \qedhere
    \end{enumerate}
\end{proof}

\begin{proof}[Proof of Theorem~\ref{thm:main result}]

By Lemma~\ref{lemma:assoc perm}, an $\alpha$-coloring of a tableau $Q^\uparrow$ (for some admissible $Q \in \SYT(\lambda)$) guarantees that $\lambda \in \mathcal{S}_\alpha$.
The result now follows immediately from the lemmas in Section~\ref{sec:alpha-colorings} (verifying the constructions of $\alpha$-colorings for all pairs $(\alpha,\lambda)$ not listed in the table in Theorem~\ref{thm:main result}, with the exception of the special case $(\alpha, \lambda)$ verified directly in part (1a) of Lemma~\ref{lemma:2-column shapes}), combined with Lemma~\ref{lemma:unattainable} (verifying that the table is correct).
\end{proof}

\section{Final remarks and conjectures}
\label{sec:conjectures}

In this section, we record several conjectures and avenues for further research, regarding the general case where $r > 2$. 
We begin by treating cycle types corresponding to \emph{strict partitions} (i.e., partitions $\alpha$ with distinct parts $\alpha_1 > \cdots > \alpha_r$).
Somewhat surprisingly, these cycle types are far easier to handle than those corresponding to non-strict partitions.

\subsection*{Cycle types corresponding to strict partitions}

The following conjecture (verified by computer for all $n \leq 15$) is the natural generalization of Theorem~\ref{thm:main result} for strict partitions $\alpha$.

\begin{conjecture} \label{conj: distinct ct}
Let $\alpha = (\alpha_1, \ldots, \alpha_r) \vdash n$ be a strict partition,
where $r \geq 3$.
\begin{enumerate}
    \item If $\alpha_r > 1$, then $\mathcal{S}_\alpha=\mathcal{B}_\alpha$.
    \item If $\alpha_r = 1$ and $n$ is odd, then $\mathcal{S}_\alpha=\mathcal{B}_\alpha$.
     \item If $\alpha_r = 1$ and $n$ is even, then $\mathcal{B}_\alpha\setminus\mathcal{S}_\alpha=\{(\frac{n}{2},\frac{n}{2})\}$.
\end{enumerate}
\end{conjecture}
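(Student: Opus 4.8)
The plan is to separate the two directions, exactly as in the proof of Theorem~\ref{thm:main result}: first establish that the single excluded shape in case (3) is genuinely unattainable, and then construct $\alpha$-colorings realizing every other shape in $\mathcal{B}_\alpha$ in all three cases.

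For the unattainability in case (3), I would generalize part (1) of Lemma~\ref{lemma:unattainable}. The point is that a strict $\alpha$ with $\alpha_r = 1$ has exactly one part equal to $1$, so every $\sigma \in \mathcal{C}_\alpha$ has a unique fixed point; and the argument of Lemma~\ref{lemma:unattainable}(1) never used that the non-fixed part was a single cycle. Indeed, for any $\sigma \in S_{2m}$ with a unique fixed point $u$, a shape of $\left(\tfrac{n}{2},\tfrac{n}{2}\right)=(m,m)$ forces the longest descending subsequence to have length $2$ (Greene's theorem); this rules out a value $>u$ appearing before position $u$ together with a value $<u$ after it, and a counting of the $u-1$ small values then forces the values $<u$ to occupy the positions before $u$ and the values $>u$ those after. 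Deleting the fixed point yields $\sigma|_\tau \in S_{2m-1}$, whose longest descending subsequence is still at most $2$, so by Greene's theorem it splits into two ascending subsequences and has a longest ascending subsequence of length at least $m$; but $u$ sits between the low block and the high block, so the longest ascending subsequence of $\sigma$ exceeds that of $\sigma|_\tau$ by exactly $1$, forcing the latter to equal $m-1<m$, a contradiction. After checking $(m,m)\in\mathcal{B}_\alpha$ (immediate from the column bound $m \le 2m-r+1$, which holds since distinct positive parts summing to $2m$ satisfy $r(r+1)/2 \le 2m$, a fortiori $r \le m+1$), this settles the single exclusion.

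The bulk of the work is the attainability of every remaining $\lambda\in\mathcal{B}_\alpha$, for which I would extend the constructions of Section~\ref{sec:alpha-colorings} from two colors to $r$ colors. The central mechanism is unchanged: color the boxes of $\Tup$ in $r$ contiguous regions, read in the column order of Construction~\ref{constr:canonical cycle}, assigning colors so that $c_i$ receives exactly $\alpha_i$ boxes, and resolve each boundary between adjacent regions by a spiral coloring of a single ``mixing'' column as in Construction~\ref{constr:nice}. Standardness of $\sigma\cdot\Tup$ then localizes: each solid-colored region is standard by Lemma~\ref{lemma:canonical cycle}, and each mixing column is standard by the argument of Lemma~\ref{lemma:nice}, provided no column is forced to carry three colors and the spiral parities line up at each boundary. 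I would organize the verification by induction on $r$, peeling off one part (say the smallest) to reduce to a strict partition with $r-1$ parts and grafting the new color into a suitable region; the base case $r=2$ is the strict case of Theorem~\ref{thm:main result}, whose only exceptions ($(4,2)$ with $(2,2,2)$, $(5,3)$ with $(2,2,2,2)$, and the $(n-1,1)$ fixed-point case) must be shown not to propagate.

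The main obstacle will be the bookkeeping of edge cases, just as in Section~\ref{sec:alpha-colorings} but now compounded by several colors and up to $r-1$ mixing columns at once. The delicate points are (i) shapes with few columns or tall columns, where a color cannot ``escape'' its region and two or three colors are forced to share a column, requiring bespoke colorings in the spirit of Constructions~\ref{constr:tricky left}, \ref{constr:long tails}, \ref{constr:2-column shapes}, and~\ref{constr:hooks}; and (ii) the parity constraints on spiral colorings that produced the $r=2$ exceptions. The key leverage---and the reason strict partitions are tractable---is that distinct part sizes with $r\ge 3$ provide enough slack to always choose region boundaries avoiding the forbidden parities and the multi-color columns. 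Formalizing ``enough slack'' into a clean invariant that survives the induction, and verifying that it fails only at $(m,m)$ in case (3), is where the real effort lies.
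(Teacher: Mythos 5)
You should first be aware that the statement you were asked to prove is Conjecture~\ref{conj: distinct ct}: the paper does \emph{not} prove it, and only reports computer verification for $n \leq 15$, so there is no proof in the paper to compare yours against. Judged on its own merits, your proposal establishes exactly one piece: the unattainability of $\left(\frac{n}{2},\frac{n}{2}\right)$ in case (3). That piece is correct, and your observation is a genuine (if modest) improvement on the paper: the argument of part (1) of Lemma~\ref{lemma:unattainable} indeed never uses that the non-fixed part of $\sigma$ is a single cycle. If $\sigma$ has a fixed point $u$ and $\sh(\sigma)=(m,m)$, the length-2 bound on descending subsequences forces the values less than $u$ to occupy the positions before $u$, so the longest ascending subsequence of $\sigma$ exceeds that of $\sigma$ with $u$ deleted by exactly one; meanwhile Greene's theorem ($a_2(\sigma)=2m$) shows the deleted word of length $2m-1$ is a union of two ascending subsequences, hence has an ascending subsequence of length at least $m$, a contradiction. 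Your membership check $\left(\frac{n}{2},\frac{n}{2}\right)\in\mathcal{B}_\alpha$ also goes through.

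The gap is everything else, which is the actual content of the conjecture: showing that \emph{every} other $\lambda\in\mathcal{B}_\alpha$ is attainable in cases (1)--(3). Your plan ($r$ contiguous regions in $\Tup$, boundaries resolved by spiral mixing columns as in Construction~\ref{constr:nice}, induction on $r$ by peeling off the smallest part) is a plausible program, but none of its critical steps is carried out. You do not define the grafting step of the induction; you do not verify that the sub-shape occupied by the remaining $r-1$ colors lies in $\mathcal{B}_{(\alpha_1,\ldots,\alpha_{r-1})}$ so that the inductive hypothesis even applies; and you do not show that the $r=2$ obstructions (the parity failures behind $(4,2)$ with $(2,2,2)$, $(5,3)$ with $(2^4)$, and the two-column/hook cases) cannot reappear as local obstructions at each boundary when several mixing columns interact. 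The ``enough slack'' invariant that is supposed to make strictness of $\alpha$ do all this work is named but never formulated --- as you yourself concede. Note also that even the weaker assertion that $\alpha$-colorings suffice to realize every $\lambda\in\mathcal{S}_\alpha$ for strict $\alpha$ is itself only conjectural in the paper (Conjecture~\ref{conj:coloring}), so your strategy rests on machinery whose adequacy is precisely what remains open. As it stands, the proposal proves the single exclusion in case (3) and leaves the conjecture's positive assertions unproven.
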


Next we make a general conjecture asserting the existence of $\alpha$-colorings if $\alpha$ is a strict partition.
Recall that for $r=2$, there is only one case in which $\lambda \in \mathcal{S}_\alpha$ does \emph{not} imply the existence of an admissible $Q \in \SYT(\lambda)$ such that $Q^\uparrow$ admits an $\alpha$-coloring; this was case~(1a) of Construction~\ref{constr:2-column shapes}, in which $\alpha = \left(\frac{n}{2}, \: \frac{n}{2}\right)$ was \emph{not} a strict partition.
The following conjecture (verified by computer for all $n \leq 13$) gives the generalization for arbitrary $r$.

\begin{conjecture}
    \label{conj:coloring}
    Let $\alpha = (\alpha_1, \ldots, \alpha_r) \vdash n$ be a strict partition, where $r \geq 1$.
    For each $\lambda \in \mathcal{S}_\alpha$, there exists an admissible $Q \in \SYT(\lambda)$ such that $Q^\uparrow$ admits an $\alpha$-coloring.
\end{conjecture}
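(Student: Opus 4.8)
The plan is to establish Conjecture~\ref{conj:coloring} for strict $\alpha$ by induction on $r$, building the required $\alpha$-coloring as a system of nested spiral layers that generalizes the two-color constructions of Section~\ref{sec:alpha-colorings}. The base cases $r \le 2$ are already in hand: for a strict pair $\alpha = (\alpha_1, \alpha_2)$, every attainable shape $\lambda \in \mathcal{S}_\alpha$ is covered by Constructions~\ref{constr:nice}--\ref{constr:1}, since the excluded shapes in the table of Theorem~\ref{thm:main result} all lie in $\mathcal{B}_\alpha \setminus \mathcal{S}_\alpha$ (so are never given to us), and the sole non-colorable case~(1a) of Construction~\ref{constr:2-column shapes} requires $\alpha_1 = \alpha_2$, which strictness forbids. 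So the work is the inductive step $r \ge 3$. Throughout I would take $Q = \Tdown$ (admissible by default), so that the problem becomes: partition the boxes of $\Tup$ into color classes of sizes $\alpha_1 > \cdots > \alpha_r$ so that the associated spiral permutation $\sigma$ makes $P = \sigma \cdot \Tup$ standard; by Lemma~\ref{lemma:assoc perm} this yields $\lambda \in \mathcal{S}_\alpha$, and since $\lambda \in \mathcal{S}_\alpha \subseteq \mathcal{B}_\alpha$ (Theorem~\ref{thm:box}) is given, the needed size bounds on $\lambda$ are automatic.

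For the construction itself I would follow the guiding principle of Section~\ref{sec:alpha-colorings}, coloring the largest class $c_1$ from the right (the tail together with the rightmost full columns of $\Tup$) and the smaller classes $c_2, \ldots, c_r$ from the left, in consecutive column-bands of increasing size. Two colors meet in each \emph{mixing column}, where a spiral coloring (Definition~\ref{def:spiral coloring}) effects the transition exactly as in Lemma~\ref{lemma:nice}. The inductive hypothesis enters by peeling off $c_1$: after removing the tail and the full \blue columns, the complementary region is a Young diagram with one column whose top is pre-colored \blue, and the remaining classes $(\alpha_2, \ldots, \alpha_r)$ should color it. The technical device that makes this rigorous is a \emph{relative coloring lemma} --- a version of the inductive statement for a diagram carrying a pre-colored top segment in its rightmost column --- whose proof splices the \blue outer spiral onto an $(\alpha_2,\ldots,\alpha_r)$-coloring of the bottom-justified leftover. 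Local standardness at each mixing column is checked as in Lemmas~\ref{lemma:nice} and~\ref{lemma:2-column shapes}, and because the bands are ordered left to right, these local checks assemble into global standardness of $P$.

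The hard part will be where the smaller classes are too small to fill whole columns. When $\alpha_r, \alpha_{r-1}, \ldots$ are all less than the (long) leftmost column length $\lambda'_1$, several color classes pile into the first column at once, and no single two-color mixing column suffices; one must generalize the spiral coloring of Definition~\ref{def:spiral coloring} to \emph{several} interleaved spirals in one column and prove the corresponding standardness. This multi-color spiral, together with the familiar boundary phenomena of Section~\ref{sec:alpha-colorings} --- a class trapped in the first column (as in Construction~\ref{constr:tricky left}), a tail too short to absorb $c_1$ (as in Construction~\ref{constr:long tails}), and the degenerate hook and two-column shapes --- is the source of a case analysis that grows with $r$, and keeping it finite and uniform is the principal difficulty. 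At the conceptual level, the crux is to isolate \emph{why} distinctness of the $\alpha_i$ is exactly what guarantees success: in every mixing column the spiral coloring forces the lower terminal box to a prescribed color, and --- as the parenthetical remarks in case~(1b) of Lemma~\ref{lemma:2-column shapes} show --- this forcing goes through precisely when the two meeting classes have unequal sizes, so that one strictly dominates and the parity deadlock of the non-strict case~(1a) cannot recur. Proving this ``dominant color'' principle once, uniformly across all mixing columns (and in the multi-color spiral) rather than rechecking it case by case, is what a clean proof of Conjecture~\ref{conj:coloring} will require; I would also need to confirm that the reduced sub-problems produced by the induction never themselves fall into a symmetric deadlock, which strictness of the peeled partition $(\alpha_2, \ldots, \alpha_r)$ should ensure.
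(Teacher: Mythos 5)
You should first be aware that this statement is not proven in the paper at all: it is stated as Conjecture~\ref{conj:coloring}, supported only by computer verification for $n \leq 13$. So there is no paper proof to compare against, and a complete argument here would be a new result. Judged on its own terms, your proposal is a research plan rather than a proof, and you say as much yourself. The three load-bearing ingredients --- the standardness lemma for a column carrying several interleaved spirals, the ``relative coloring lemma'' for a region with a pre-colored segment, and the uniform ``dominant color principle'' at mixing columns --- are each named but never stated precisely, let alone proven. Your base cases are sound (for $r=1$ this is Lemma~\ref{lemma:canonical cycle}, and for strict $\alpha$ with $r=2$ the paper's constructions do cover every $\lambda \in \mathcal{S}_\alpha$, since case~(1a) of Construction~\ref{constr:2-column shapes} requires $\alpha_1 = \alpha_2$), but the inductive step is where all the content lies, and it is missing.

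Two concrete obstructions deserve naming. First, your decision to fix $Q = \Tdown$ throughout is already known to fail at $r = 2$: Constructions~\ref{constr:alpha2=colambda1=3}(3) and~\ref{constr:2}(2b) must pass to a modified admissible $Q$ (transposing the entries $3$ and $4$) because no $\alpha$-coloring of $\Tup$ itself exists for those shapes. Any induction that silently assumes $Q = \Tdown$ propagates this failure, and the paper's own remark after Conjecture~\ref{conj:coloring} says only that $Q = \Tdown$ works ``almost always.'' Second, the peeling step has a circularity problem: after removing the $c_1$ region, the leftover is generally not a Young diagram (hence your relative lemma), and even granting that device, to invoke the inductive hypothesis you must know the resulting shape is attainable for $(\alpha_2, \ldots, \alpha_r)$ --- but the explicit description of $\mathcal{S}_\beta$ for strict $\beta$ with three or more parts is itself only Conjecture~\ref{conj: distinct ct}, which is open. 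You are given $\lambda \in \mathcal{S}_\alpha$, but the only usable consequence is $\lambda \in \mathcal{B}_\alpha$ (Theorem~\ref{thm:box}), and passing from a box bound on $\lambda$ to a membership statement for the peeled shape is exactly the unresolved content of both conjectures. In short: the architecture is reasonable and consistent with how Section~\ref{sec:alpha-colorings} operates, but the proposal establishes nothing beyond what the paper already claims, and the gaps you flag are genuine, not routine.
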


\begin{figure}[t] 
     \centering
\includegraphics[width=.9\textwidth]{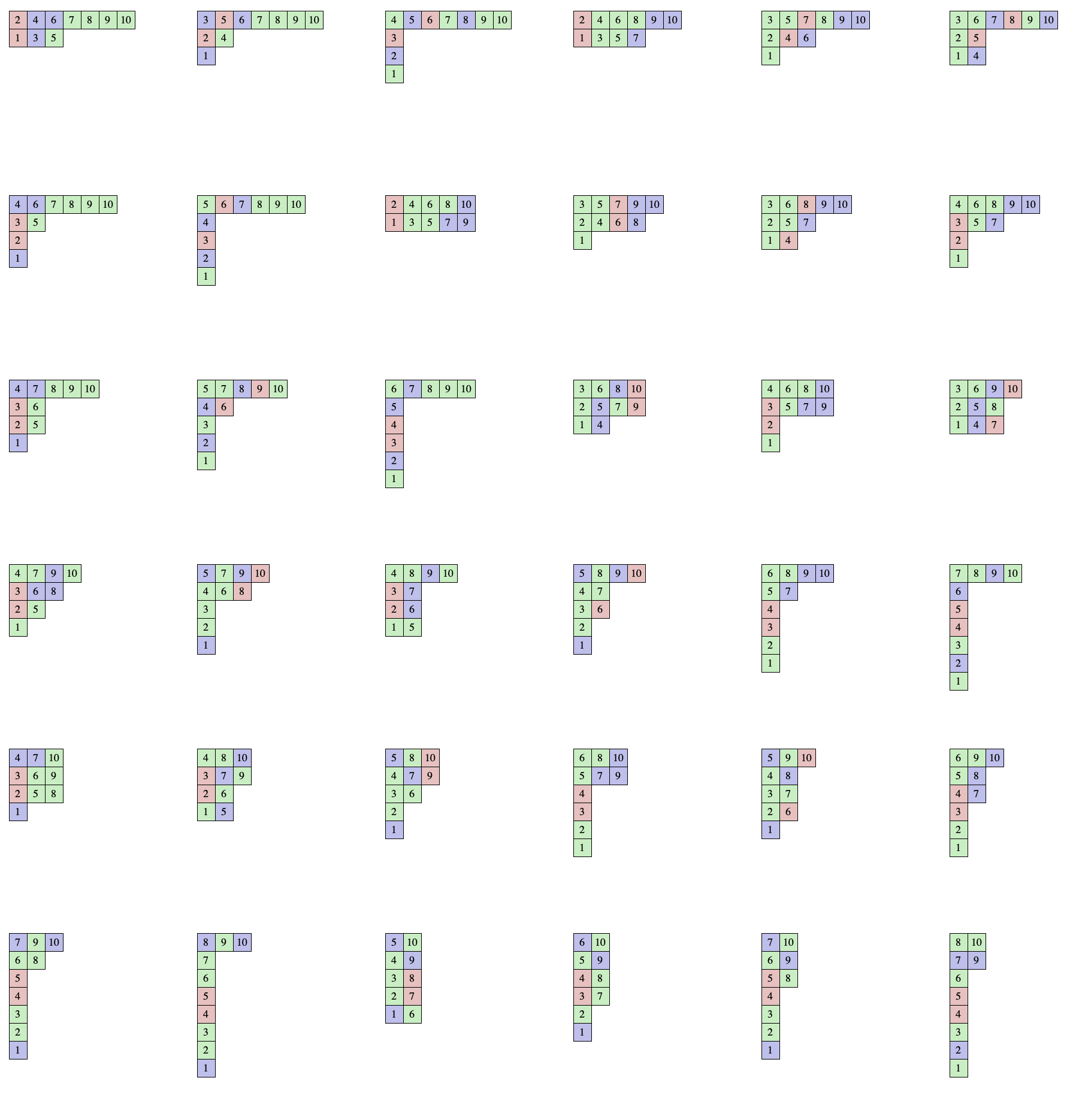}     
\caption{For  $\alpha=(5,3,2)$, we have $\mathcal{S}_\alpha=\mathcal{B}_\alpha$. Furthermore, for every $\lambda \in \mathcal{B}_\alpha$ there exists an $\alpha$-coloring of $\Tup$ as shown.
Here the colors $c_1, c_2, c_3$ are green, blue, and red, respectively.}
        \label{fig:alpha-colorings 532}
\end{figure}

In fact, it is almost always possible to choose $Q = \Tdown$ in Conjecture~\ref{conj:coloring}.
For example, when $n=13$ and $r\not=2$, there exists an $\alpha$-coloring of $\Tup$ for every $\lambda \in \mathcal{S}_\alpha = \mathcal{B}_\alpha$.
See Figure~\ref{fig:alpha-colorings 532}, where $\alpha = (5,3,2)$ and we show an $\alpha$-coloring of $\Tup$ for every $\lambda \in \mathcal{B}_\alpha$.

\subsection*{Cycle types corresponding to non-strict partitions}

It seems likely that the converse of Conjecture~\ref{conj: distinct ct} is also true.
In particular, this would mean that when $\alpha_r > 1$, we have  $\mathcal{S}_\alpha=\mathcal{B}_\alpha$ only if $\alpha$ is a strict partition.
By contrast, when $\alpha$ is not a strict partition (i.e., when $\alpha$ contains repeated parts), it becomes much more difficult to describe $\mathcal{S}_\alpha$.
For example, the case $\alpha = (3^r)$ is already quite complicated, even for small values of $r$.
However, using the fact that $(4,1^2) \notin \mathcal{S}_{(3^2)}$, it is possible to show that the only hook that can appear in $\mathcal{S}_{(3^r)}$ is of the form $(r+1, 1^{2r-1})$. The rest of the structure remains mysterious.

\begin{problem}
For general cycle types $\alpha = (\alpha_1, \ldots, \alpha_r)\vdash n$, describe $\mathcal{S}_\alpha$ explicitly.    
\end{problem}

There is at least one case that affords an explicit description of $\mathcal{S}_\alpha$, namely the case where $\alpha_1 \leq 2$.
 The following proposition is a consequence of Sch\"utzenberger~\cite{Schutzenberger}*{4.4 on p. 95} classifying the RS shapes that arise when $\sigma$ is an involution.
 
\begin{proposition}
\label{prop:schutz}
Let $\alpha = (2^{r-k}, \; 1^{k})$.
Then
\[
\mathcal{S}_{\alpha} = \left\{ \lambda \vdash n : \textup{$\lambda$ has exactly $k$ many columns of odd length} \right\}.
\]
\end{proposition}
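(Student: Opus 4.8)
The plan is to deduce the proposition directly from Schützenberger's theorem, which characterizes the image of the involutions under the RS correspondence. First I would observe that for $\alpha = (2^{r-k}, 1^k)$, the conjugacy class $\mathcal{C}_\alpha$ is precisely the set of involutions $\sigma \in S_n$ with exactly $k$ fixed points (and $r-k$ transpositions, so that $n = 2(r-k)+k$). This is immediate from the definition of cycle type, since a permutation is an involution exactly when every cycle has length $1$ or $2$.

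Next I would recall the two-part content of Schützenberger's theorem. First, the RS correspondence restricts to a bijection between the involutions in $S_n$ and the set $\coprod_{\lambda \vdash n} \SYT(\lambda)$: concretely, $\sigma$ is an involution if and only if $\operatorname{RS}(\sigma) = (P,P)$ for a single tableau $P$, and the map $\sigma \mapsto P$ is a bijection onto all standard Young tableaux of size $n$. Second---and this is the crucial input---the number of fixed points of an involution $\sigma$ equals the number of odd-length columns of its RS shape $\sh(\sigma)$. In particular, the number of fixed points depends only on $\sh(\sigma)$, not on the particular tableau $P$.

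The forward inclusion follows at once: any $\sigma \in \mathcal{C}_\alpha$ is an involution with $k$ fixed points, so by the second part of Schützenberger's theorem its shape has exactly $k$ odd-length columns, whence $\mathcal{S}_\alpha \subseteq \{\lambda \vdash n : \lambda \text{ has exactly } k \text{ odd columns}\}$. For the reverse inclusion, given a partition $\lambda \vdash n$ with exactly $k$ odd-length columns, I would choose any $T \in \SYT(\lambda)$ (this set is nonempty) and set $\sigma \coloneqq \operatorname{RS}^{-1}(T,T)$, which is an involution of shape $\lambda$ by the first part of the theorem. Since its shape has $k$ odd columns, the second part forces $\sigma$ to have exactly $k$ fixed points, so $\sigma \in \mathcal{C}_\alpha$ and $\lambda = \sh(\sigma) \in \mathcal{S}_\alpha$.

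There is no serious obstacle here once Schützenberger's theorem is in hand; the entire content of the proposition is the fixed-point-versus-odd-column statement, which is the nontrivial ingredient of that theorem and which I would simply cite. The only point requiring a moment's care is to note that the number of fixed points is a function of the shape alone, so that exhibiting a single involution for each admissible shape already recovers every shape in $\mathcal{S}_\alpha$.
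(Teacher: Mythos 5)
Your proof is correct and follows essentially the same route as the paper, which likewise deduces the proposition directly from Sch\"utzenberger's theorem (that $\operatorname{RS}$ restricts to a bijection between involutions and single standard tableaux, with the number of fixed points equal to the number of odd-length columns of the shape). The paper simply cites this result without spelling out the two inclusions; your writeup fills in exactly those routine details, including the correct observation that the fixed-point count depends only on the shape.
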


\begin{figure}[t] 
     \centering
\includegraphics[width=.9\textwidth]{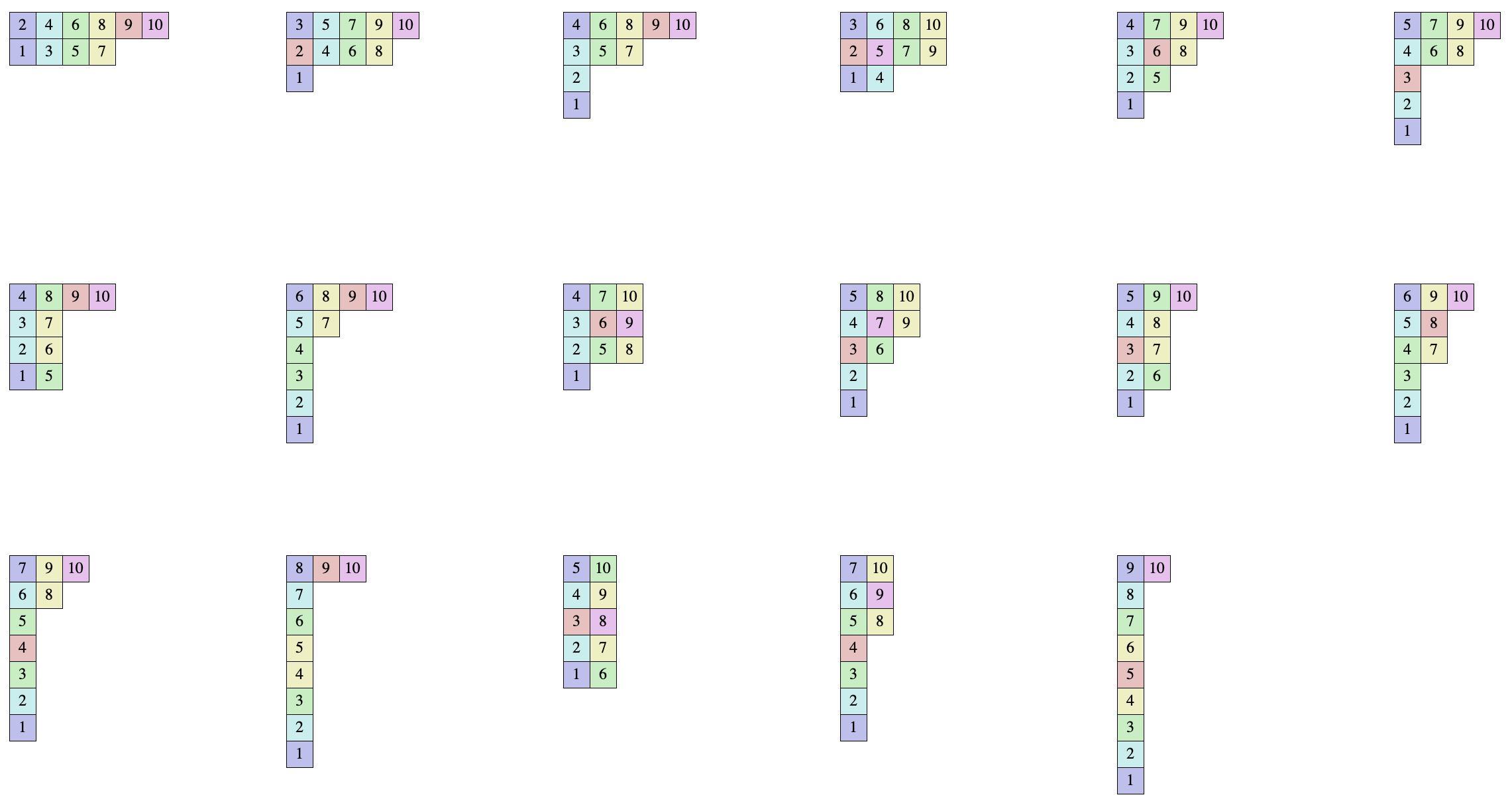}     
\caption{For $\alpha=(2^4,1^2)$, by Proposition~\ref{prop:schutz} we have the explicit description $\mathcal{S}_\alpha=\{\lambda\in \mathcal{B}_\alpha : \text{$\lambda$ has exactly $2$ columns of odd length}
\}$. 
For each $\lambda \in \mathcal{S}_\alpha$, we show the canonical $\alpha$-coloring of $\Tup$.
The red and the magenta boxes correspond to the two 1's in $\alpha$, while the remaining four colors occur in pairs that are placed symmetrically in each column.}
        \label{fig:schutz}
\end{figure}

Our $\alpha$-colorings provide an especially nice interpretation of Proposition~\ref{prop:schutz}.
In particular, if $\alpha = (2^{r-k}, 1^k)$, then for each $\lambda \in \mathcal{S}_\alpha$ there is a canonical $\alpha$-coloring of $\Tup$, obtained as follows.
In each column of odd length, color the middle box with one of the colors occurring in exactly one box (i.e., the $k$ many colors corresponding to the $1$'s in $\alpha$).
Then apply the remaining $r-k$ many colors (each occurring in a pair of boxes) such that the colors in each column are vertically symmetrical.
(See Figure~\ref{fig:schutz} for an example.)

Next, as a generalization of Proposition~\ref{prop:schutz}, we consider the effect of adding fixed points to permutations (i.e., appending $1$'s to cycle types).
In algebraic combinatorics, the \emph{Pieri rule} gives the expansion of the product of two Schur functions, one of which corresponds to a partition with a single row (say of size $k$):
\[
s_\mu \cdot s_{(k)} = \sum_{\lambda} s_\lambda,
\]
where the sum ranges over all partitions $\lambda$ whose the Young diagram is obtained from that of $\mu$ by adding $k$ many boxes, no two of which are added to the same column.
(The Pieri rule also gives an analogous result for single columns $(1^k)$.)
In the context of our problem, adding $k$ many fixed points to a cycle type $\alpha$ has a Pieri-like effect on the resulting RS shapes, described in the following conjectures (the second of which is stated in terms of $\alpha$-colorings).

\begin{conjecture}[``Almost'' Pieri rule] \label{conj: almost pieri}
Let $\alpha=(\alpha_1, \ldots, \alpha_r) \vdash n$,
where
$\alpha_r > 1$ 
and $r \geq 1$. 
Fix a positive integer $k$, and set $\widetilde{\alpha} \coloneqq (\alpha_1, \ldots, \alpha_r,1^k)$.
Then we have
\[
\mathcal{S}_{\widetilde{\alpha}} = \left\{ \lambda \in \mathcal{B}_{\widetilde{\alpha}} :
\begin{array}{l}
\textup{the Young diagram of $\lambda$ is obtained from the Young diagram} \\
\textup{of some $\mu \in \mathcal{S}_\alpha$ by adding exactly $k$ boxes, with no two} \\
\textup{in the same column, and such that if $\lambda_1' = 2$ then all $k$ boxes} \\
\textup{must be added to the first row}
\end{array} \right\}.
\]    
\end{conjecture}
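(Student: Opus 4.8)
The plan is to prove the two inclusions separately, reducing everything to the combinatorics of the map that deletes fixed points. Since $\alpha_r > 1$, a permutation $\tau \in \mathcal{C}_{\widetilde\alpha}$ has \emph{exactly} $k$ fixed points, namely the $k$ one-cycles coming from the appended $1^k$; deleting all of them (and relabeling the remaining values in an order-preserving way) produces a fixed-point-free permutation $\sigma \in \mathcal{C}_\alpha$. Conversely, one builds elements of $\mathcal{C}_{\widetilde\alpha}$ from elements of $\mathcal{C}_\alpha$ by inserting $k$ fixed points at chosen values. Thus the whole statement becomes a question of how the RS shape changes under insertion and deletion of fixed points.

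For the inclusion $\mathcal{S}_{\widetilde\alpha} \subseteq (\cdots)$, I would start with $\tau \in \mathcal{C}_{\widetilde\alpha}$ satisfying $\sh(\tau) = \lambda$, delete its $k$ fixed points to obtain $\sigma \in \mathcal{C}_\alpha$, and set $\mu \coloneqq \sh(\sigma) \in \mathcal{S}_\alpha$. The first step is the fact that deleting a single letter removes exactly one box from the RS shape: by Greene's theorem~\eqref{eq: desc asc description of cols and rows of RSK shape}, each partial sum $a_j$ and $d_j$ drops by $0$ or $1$ under a single deletion (disjoint subsequences use the deleted position at most once), and since the total size drops by $1$ this forces a single-box change; iterating gives $\mu \subseteq \lambda$ with $|\lambda/\mu| = k$. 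The heart of this direction is the claim that $\lambda/\mu$ is a \emph{horizontal} strip. This should follow from the fact that the $k$ fixed points form an \emph{increasing} subsequence, so that any descending subsequence of $\tau$ meets them at most once; the real work is upgrading the resulting aggregate estimate $d_j(\tau) - d_j(\sigma) \le \min(j,k)$ to the per-column bound $\lambda'_j - \mu'_j \le 1$, which I expect is cleanest through Viennot's geometric light-and-shadow construction~\cite{Viennot}, where the fixed points sit on the main diagonal and so contribute to distinct shadow lines. Finally, the boundary clause (if $\lambda'_1 = 2$ then the strip lies entirely in the first row) would be handled by a separate argument in the spirit of Lemma~\ref{lemma:unattainable}(1): a fixed-point-free $\sigma$ whose two-row shape $\mu$ has an overly balanced second row forces $\sigma$ to split into too few increasing subsequences, contradicting Greene's theorem.

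For the reverse inclusion, given $\mu \in \mathcal{S}_\alpha$ realized by some $\sigma_0 \in \mathcal{C}_\alpha$ and a prescribed horizontal strip $\lambda/\mu = \{b_1, \ldots, b_k\}$ satisfying the boundary clause, I would insert $k$ fixed points into a carefully chosen $\sigma_0$, adding the boxes $b_1, \ldots, b_k$ one at a time. The core sub-lemma is that the value at which a fixed point is inserted controls which addable corner receives the new box; here the freedom to choose both $\sigma_0$ (among all realizers of $\mu$) and the insertion values must be exploited. A convenient route is to run this through the machinery of Section~\ref{sec:alpha-colorings}: a fixed point is precisely a singleton color, so the goal is to augment an $\alpha$-coloring realizing $\mu$ by $k$ singleton-colored boxes placed in distinct columns so that the permuted tableau remains standard, after which Corollary~\ref{corollary:MM column-canonical} certifies the shape.

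The main obstacle, in both directions, is the exact \emph{column} bookkeeping: proving that the set of achievable added boxes is precisely the horizontal strips allowed by the boundary clause, and that no further shapes (nor any two-box-in-a-column configurations) can occur. The examples already show that not every addable corner of $\mu$ is realizable (for instance $(2,2) \notin \mathcal{S}_{(3,1)}$), and the single genuine obstruction concentrates exactly in the two-row case $\lambda'_1 = 2$, mirroring the balanced shapes $(\tfrac{n}{2},\tfrac{n}{2})$ of Lemma~\ref{lemma:unattainable}. I therefore expect the crux to be a sharp structural analysis of two-row (and near-two-row) shapes under fixed-point insertion, which is also the source of the ``almost'' in the Pieri rule.
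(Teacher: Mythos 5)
First, a structural point: the statement you set out to prove is Conjecture~\ref{conj: almost pieri} of the paper, i.e.\ it is \emph{open}; the authors prove nothing about it, and even the companion Conjecture~\ref{conj:alpha-coloring} receives only a sketch that they themselves flag as incomplete. So there is no proof of record to compare against, and your text must be judged as a research plan. As such, it has genuine gaps at exactly the hard points. In the forward direction: (i) your justification of the one-box deletion fact does not work as stated --- knowing that each Greene partial sum $a_j$ drops by $0$ or $1$ while the total drops by $1$ does not preclude a non-monotone drop pattern (e.g.\ $a_1$ drops while $a_1+a_2$ does not), which would move a box from row $1$ to row $2$ rather than delete a single box; the fact is true, but it requires the standard bumping lemma (simultaneous row insertion preserves ``differ by one box''), not Greene's theorem alone. (ii) More seriously, the horizontal-strip claim cannot rest on ``the fixed points form an increasing subsequence'': the general principle that deleting an increasing subsequence removes a horizontal strip is \emph{false}. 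For example, $w=[2,4,1,3]$ has shape $(2,2)$, and deleting the increasing subsequence of letters $2,3$ leaves $[4,1]$, of shape $(1,1)$; the two removed boxes lie in the same column. Hence any correct argument must use the diagonal position of the fixed points in an essential way. The aggregate bound $d_j(\tau)-d_j(\sigma)\le\min(j,k)$, which you correctly note is insufficient, is all that the increasing-subsequence property can give, and the proposed Viennot shadow-line upgrade is asserted as an expectation, not carried out.

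Second, the reverse inclusion --- where the real difficulty of a Pieri-type statement lives --- is not addressed beyond a restatement of the goal (``carefully chosen $\sigma_0$'', ``the freedom \ldots must be exploited''). Your suggested coloring route coincides with the authors' own sketch following Conjecture~\ref{conj:alpha-coloring} (insert each fixed point as a singleton color at the axis of symmetry of its column), but note two obstacles: the authors explicitly point out the unresolved case of columns in which the associated permutation leaves no original entries, and, independently, an $\alpha$-coloring realizing $\mu$ need not exist for every $\mu\in\mathcal{S}_\alpha$ (case (1a) of Construction~\ref{constr:2-column shapes}), so the coloring machinery cannot even be initialized for all admissible starting shapes $\mu$. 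Finally, the boundary clause (when $\lambda_1'=2$, all $k$ boxes must be added to the first row) and the verification that the right-hand side lies in $\mathcal{B}_{\widetilde{\alpha}}$ are only gestured at. In short, your delete/insert-fixed-points decomposition is the natural first step and is very likely the right one, but none of the three essential ingredients (one-box deletion, horizontal strip, realizability of every allowed strip) is established here, which is consistent with the statement still being a conjecture.
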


\begin{conjecture}
\label{conj:alpha-coloring}
    Let $\alpha$ and $\widetilde{\alpha}$ be as in Conjecture~\ref{conj: almost pieri}.
    Let $\mu \vdash n$ such that there is an admissible $Q \in \SYT(\mu)$ where $Q^\uparrow$ admits an $\alpha$-coloring.
    If $\lambda$ is obtained from $\mu$ as described in Conjecture~\ref{conj: almost pieri}, then there exists an admissible $\widetilde{Q} \in \SYT(\lambda)$ such that $\widetilde{Q}^\uparrow$ admits an $\widetilde{\alpha}$-coloring.
\end{conjecture}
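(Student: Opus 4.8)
The plan is to construct the tableau $\widetilde{Q}$ and its $\widetilde{\alpha}$-coloring directly from the given $\alpha$-coloring of $Q^\uparrow$, exploiting the fact that $\lambda/\mu$ is a horizontal strip and hence meets each column in at most one box, so that the $k$ new boxes occupy $k$ \emph{distinct} columns. Because those columns are distinct, I would modify the coloring \emph{column by column}, treating each added box independently; this isolates the entire difficulty into a single local problem and avoids a fragile induction on $k$. In the most common situation $Q=\Tdown$ I would take $\widetilde{Q}=T_\lambda$ and adjust, while for a general admissible $Q$ I would take $\widetilde{Q}$ to be the standardization of shape $\lambda$ agreeing with $Q$ on the relative order of the old entries and assigning each new entry to be the (forced) maximum of its column.

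The first step is to locate a new box after the $\uparrow$ operation. Since $\lambda/\mu$ is a horizontal strip, each new box sits at the \emph{bottom} of its column in $\widetilde{Q}$, hence at the \emph{top} of that column in $\widetilde{Q}^\uparrow$, carrying the column's largest entry; inserting it shifts every old box of the column down one row but preserves their relative order. By the ``ignore the other colors'' clause of Definition~\ref{def: alpha coloring}, the spiral arrows of every color are then structurally unchanged outside the single modified column. The naive idea of giving each new box its own singleton color fails as soon as its column has length $\geq 2$: a singleton is a fixed point, so in $\widetilde{P}=\widetilde{\sigma}\cdot \widetilde{Q}^\uparrow$ the new box would stay at the top of its column holding the column's \emph{maximum}, violating standardness. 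This works only when the new box forms a brand-new length-$1$ column, which is exactly the situation forced by the hypothesis ``if $\lambda_1'=2$ then all $k$ boxes lie in the first row''; that hypothesis therefore reduces the two-row case to this trivial one.

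The heart of the construction is an \emph{absorption} step in each length-$\geq 2$ column $j$ receiving a new box. Instead of coloring the new top box with a singleton, I would absorb it into one of the existing colors $c_i$ already present in column $j$, and simultaneously release a suitable interior box of $c_i$ to carry the new singleton color; this keeps every cardinality intact ($c_i$ loses one box and gains one) and introduces exactly one fixed point, as $\widetilde{\alpha}$ requires. The released box should be the \emph{terminal box} (Definition~\ref{def:spiral coloring}) of the re-spiraled color $c_i$ in column $j$, i.e.\ the central box whose entry is left fixed; the prototype is the passage from the $\alpha$-coloring of $(1^2)$ to the $\widetilde{\alpha}$-coloring of $(1^3)$, where $\sigma=[2,1]$ becomes $\widetilde{\sigma}=[3,2,1]$ and the new singleton lands on the \emph{middle} entry. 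This is precisely the canonical pattern of Figure~\ref{fig:schutz}, where the singleton colors occupy the centers of the odd columns. One then re-runs the spiral coloring of $c_i$ in column $j$ with the new top box included and the terminal box excluded, and argues as in Lemma~\ref{lemma:canonical cycle} that the interior entries increase downward, that the fixed central entry is correctly sandwiched, and that removing a middle box from $c_i$ while adjoining the new box keeps $c_i$ a single $\alpha_i$-cycle threading through all its columns (with $b^*$ split off as a genuine $1$-cycle).

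The main obstacle is exactly this local verification: absorbing the new maximal top entry into the cycle of $c_i$ changes which entries of $c_i$ appear at the top and bottom of column $j$ and at its interfaces with columns $j\pm 1$, and these boundary entries are the ones governing the row-increase of $\widetilde{P}$. I expect the cleanest control to come from showing that the re-spiraled $c_i$ behaves, at the two boundaries of column $j$, identically to the original spiral — the same entry leaves to the right and the same entry enters from the left — so that only the interior of column $j$ is rearranged, with the released terminal box absorbing the discrepancy. Finally I would assemble the per-column modifications, verify admissibility of $\widetilde{Q}$ by checking that the new column-maxima are inserted in increasing order along the first row of $\widetilde{Q}^\uparrow$ (automatic for $\widetilde{Q}=T_\lambda$, and arrangeable in general), confirm that $c_1,\dots,c_r$ remain single cycles of lengths $\alpha_1,\dots,\alpha_r$ alongside the $k$ new fixed points, and invoke Corollary~\ref{corollary:MM column-canonical} to conclude that the associated $\widetilde{\sigma}\in\mathcal{C}_{\widetilde{\alpha}}$ satisfies $\operatorname{RS}(\widetilde{\sigma})=(\widetilde{P},\widetilde{Q})$ with $\sh(\widetilde{\sigma})=\lambda$, so that the coloring is a genuine $\widetilde{\alpha}$-coloring.
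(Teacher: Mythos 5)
Your proposal takes essentially the same approach as the paper: the paper (which itself only sketches an argument for this conjecture, in the case $Q = T_\mu$) inserts each new box as a uniquely-colored fixed point at the ``axis of symmetry'' of its column, which is exactly the end state of your absorption-and-release step --- the singleton color landing on the central terminal box with the old color re-spiraled around it, matching the canonical pattern of Figure~\ref{fig:schutz}. Both sketches rest on the same local mechanism and leave open the same edge-case verifications (the paper flags columns where $\sigma$ leaves no original entries; you flag the boundary interfaces of the modified column and the two-row case).
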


A sketch of a proof of Conjecture~\ref{conj:alpha-coloring} --- at least in the case where $Q = T_\mu$, which is almost always possible --- can be given as follows.
Consider an $\alpha$-coloring of $T_\mu^\uparrow$, with associated permutation $\sigma$.
To obtain an $\widetilde{\alpha}$-coloring of $\Tup$, we take the $\alpha$-coloring of $T_\mu^\uparrow$ and insert one box (a fixed point of the new associated permutation $\widetilde{\sigma}$) into each of the the $k$ many columns required to obtain the shape $\lambda$; in particular, we insert this box at the ``axis of symmetry'' in that column (as in case (2) of Construction~\ref{constr:1}), i.e., the unique location about which $\sigma$ reflects the entries in that column.
We then give each of these $k$ boxes a unique color.
Of course, special care must be taken in columns where $\sigma$ leaves no original entries remaining, but in general every column has a location at which a fixed point can be inserted so as to preserve the standardness of $\widetilde{\sigma} \cdot \Tup$.

This project also suggests several interesting enumerative problems:

\begin{problem}
    Given cycle type $\alpha \vdash n$, describe the \emph{multiset} of shapes $\{\sh(\sigma) : \sigma \in \mathcal{C}_\alpha \}$.
\end{problem}

\begin{problem}
\label{prob:admissible}
    Find a formula for the number of admissible tableaux of a given shape $\lambda$.
\end{problem}

\begin{problem}
    Given $\lambda \in \mathcal{S}_\alpha$, determine the number of $\alpha$-colorings of $\Tup$.
    More generally, given an admissible $Q \in \SYT(\lambda$), determine the number of $\alpha$-colorings of~$Q^\uparrow$.
\end{problem}

\bibliographystyle{amsplain}
\bibliography{refs}

@book {StanleyEC2,
    AUTHOR = {Stanley, R. P.},
     TITLE = {Enumerative combinatorics. {V}ol. 2},
    SERIES = {Cambridge Studies in Advanced Mathematics},
    VOLUME = {208},
   EDITION = {2},
      NOTE = {With an appendix by Sergey Fomin},
 PUBLISHER = {Cambridge University Press, Cambridge},
      YEAR = {2024},
     PAGES = {xvi+783},
      ISBN = {978-1-009-26249-1; 978-1-009-26248-4},
   MRCLASS = {05-02 (05A15 05E05 05E10)},
  MRNUMBER = {4621625},
}

@article {SchenstedOriginalPaper,
    AUTHOR = {Schensted, C.},
     TITLE = {Longest increasing and decreasing subsequences},
   JOURNAL = {Canadian J. Math.},
  FJOURNAL = {Canadian Journal of Mathematics. Journal Canadien de
              Math\'ematiques},
    VOLUME = {13},
      YEAR = {1961},
     PAGES = {179--191},
      ISSN = {0008-414X,1496-4279},
   MRCLASS = {05.00},
  MRNUMBER = {121305},
MRREVIEWER = {D.\ E.\ Rutherford},
       DOI = {10.4153/CJM-1961-015-3},
       URL = {https://doi.org/10.4153/CJM-1961-015-3},
}

@incollection {Schutzenberger,
    AUTHOR = {Sch\"utzenberger, M.-P.},
     TITLE = {La correspondance de {R}obinson},
 BOOKTITLE = {Combinatoire et repr\'esentation du groupe sym\'etrique
              ({A}ctes {T}able {R}onde {CNRS}, {U}niv. {L}ouis-{P}asteur
              {S}trasbourg, {S}trasbourg)},
    SERIES = {Lecture Notes in Math.},
    VOLUME = {Vol. 579},
     PAGES = {59--113},
 PUBLISHER = {Springer, Berlin-New York},
      YEAR = {1977},
   MRCLASS = {20C30 (05A17)},
  MRNUMBER = {498826},
MRREVIEWER = {Dominique\ Foata},
}

@article {RS-complete-Rubinstein,
    AUTHOR = {Goel, A. and Rubinstein-Salzedo, S.},
     TITLE = {R{S}-complete cycle types},
   JOURNAL = {Australas. J. Combin.},
  FJOURNAL = {The Australasian Journal of Combinatorics},
    VOLUME = {89},
      YEAR = {2024},
     PAGES = {215--233},
      ISSN = {1034-4942,2202-3518},
   MRCLASS = {05E10 (05A05)},
  MRNUMBER = {4758567},
}

@article {Robinson-Representations-of-Sn,
    AUTHOR = {Robinson, G. de B.},
     TITLE = {On the {R}epresentations of the {S}ymmetric {G}roup},
   JOURNAL = {Amer. J. Math.},
  FJOURNAL = {American Journal of Mathematics},
    VOLUME = {60},
      YEAR = {1938},
    NUMBER = {3},
     PAGES = {745--760},
      ISSN = {0002-9327,1080-6377},
   MRCLASS = {99-04},
  MRNUMBER = {1507943},
       DOI = {10.2307/2371609},
       URL = {https://doi.org/10.2307/2371609},
}

@article {Knuth-Perm-Mat-GYT,
    AUTHOR = {Knuth, D. E.},
     TITLE = {Permutations, matrices, and generalized {Y}oung tableaux},
   JOURNAL = {Pacific J. Math.},
  FJOURNAL = {Pacific Journal of Mathematics},
    VOLUME = {34},
      YEAR = {1970},
     PAGES = {709--727},
      ISSN = {0030-8730,1945-5844},
   MRCLASS = {05.30},
  MRNUMBER = {272654},
MRREVIEWER = {M.\ Doob},
       URL = {http://projecteuclid.org/euclid.pjm/1102971948},
}

@article {Krattenthaler-Inc/dec-chains,
    AUTHOR = {Krattenthaler, C.},
     TITLE = {Growth diagrams, and increasing and decreasing chains in
              fillings of {F}errers shapes},
   JOURNAL = {Adv. in Appl. Math.},
  FJOURNAL = {Advances in Applied Mathematics},
    VOLUME = {37},
      YEAR = {2006},
    NUMBER = {3},
     PAGES = {404--431},
      ISSN = {0196-8858,1090-2074},
   MRCLASS = {05A15 (05A17 05E10)},
  MRNUMBER = {2261181},
MRREVIEWER = {Marni\ Mishna},
       DOI = {10.1016/j.aam.2005.12.006},
       URL = {https://doi.org/10.1016/j.aam.2005.12.006},
}

@book {Fulton-YT,
    AUTHOR = {Fulton, W.},
     TITLE = {Young tableaux},
    SERIES = {London Mathematical Society Student Texts},
    VOLUME = {35},
      NOTE = {With applications to representation theory and geometry},
 PUBLISHER = {Cambridge University Press, Cambridge},
      YEAR = {1997},
     PAGES = {x+260},
      ISBN = {0-521-56144-2; 0-521-56724-6},
   MRCLASS = {05E10 (05E05 05E15 14M15 20G05)},
  MRNUMBER = {1464693},
MRREVIEWER = {Tadeusz\ J\'ozefiak},
}

@book {Sagan-Sn,
    AUTHOR = {Sagan, B. E.},
     TITLE = {The symmetric group},
    SERIES = {Graduate Texts in Mathematics},
    VOLUME = {203},
   EDITION = {Second},
      NOTE = {Representations, combinatorial algorithms, and symmetric
              functions},
 PUBLISHER = {Springer-Verlag, New York},
      YEAR = {2001},
     PAGES = {xvi+238},
      ISBN = {0-387-95067-2},
   MRCLASS = {05E10 (05E05 20C30)},
  MRNUMBER = {1824028},
       DOI = {10.1007/978-1-4757-6804-6},
       URL = {https://doi.org/10.1007/978-1-4757-6804-6},
}

@article {Greene--Schensted-Extension,
    AUTHOR = {Greene, C.},
     TITLE = {An extension of {S}chensted's theorem},
   JOURNAL = {Advances in Math.},
  FJOURNAL = {Advances in Mathematics},
    VOLUME = {14},
      YEAR = {1974},
     PAGES = {254--265},
      ISSN = {0001-8708},
   MRCLASS = {05A17},
  MRNUMBER = {354395},
MRREVIEWER = {A.\ O.\ Morris},
       DOI = {10.1016/0001-8708(74)90031-0},
       URL = {https://doi.org/10.1016/0001-8708(74)90031-0},
}

@incollection {Viennot,
    AUTHOR = {Viennot, G.},
     TITLE = {Une forme g\'{e}om\'{e}trique de la correspondance de
              {R}obinson--{S}chensted},
 BOOKTITLE = {Combinatoire et repr\'{e}sentation du groupe sym\'{e}trique ({A}ctes
              {T}able {R}onde {CNRS}, {U}niv. {L}ouis-{P}asteur
              {S}trasbourg, {S}trasbourg, 1976)},
    SERIES = {Lecture Notes in Math., Vol. 579},
     PAGES = {29--58},
 PUBLISHER = {Springer, Berlin},
      YEAR = {1977},
   MRCLASS = {20C30 (05A17)},
  MRNUMBER = {0470059},
MRREVIEWER = {A. Kerber},
}

@article{GesselReutenauer,
title = {Counting permutations with given cycle structure and descent set},
journal = {J. Combin. Theory Ser. A},
volume = {64},
number = {2},
pages = {189-215},
year = {1993},
issn = {0097-3165},
doi = {https://doi.org/10.1016/0097-3165(93)90095-P},
url = {https://www.sciencedirect.com/science/article/pii/009731659390095P},
author = {I. Gessel and C. Reutenauer},
}

\end{document}